\def\@url#1{{\tt\def~{\lower3.5pt\hbox{\char'176}}\def\_{\char'137}#1}}
\let\fullref\autoref
\def\makeautorefname#1#2{\expandafter\def\csname#1autorefname\endcsname{#2}}
                   \let\c@lemma\c@theorem
\newtheorem{thm}{Theorem}[subsection]
\newtheorem{cor}{Corollary}[subsection]
\newtheorem{prop}{Proposition}[subsection]
\newtheorem{lem}{Lemma}[subsection]
\theoremstyle{definition}
\newtheorem{defn}{Definition}[subsection]
\newtheorem{rem}{Remark}[subsection]
\newtheorem{warn}{Warning}[subsection]
\let\c@lem=\c@thm
\let\c@cor=\c@thm
\let\c@prop=\c@thm
\let\c@lem=\c@thm
\let\c@defn=\c@thm
\let\c@exmps=\c@thm
\let\c@rem=\c@thm
\let\c@warn=\c@thm
\numberwithin{equation}{subsection}
\theoremstyle{theorem}
\newtheorem{Athm}{Theorem}
\newtheorem{Acor}{Corollary}
\newtheorem{Alem}{Lemma}
\newtheorem{Aprop}{Proposition}
\theoremstyle{definition}
\newtheorem{Arem}{Remark}
\let\c@Alem=\c@Athm
\let\c@Acor=\c@Athm
\let\c@Aprop=\c@Athm
\let\c@Alem=\c@Athm
\let\c@Adefn=\c@Athm
\let\c@Arem=\c@Athm
\let\c@Anot=\c@Athm
\newcommand{\FF}{\mathbb{F}}
\newcommand{\Z}{\mathbb{Z}}
\newcommand{\F}{\mathbb{F}}
\newcommand{\G}{\mathbb{G}}
\newcommand{\W}{\mathbb{W}}
\newcommand{\Sn}{\mathbb{S}}
\newcommand{\cC}{\mathcal{C}}
\newcommand{\cE}{\mathcal{E}}
\newcommand{\ob}{\overline{b}}
\newcommand{\od}{\overline{\Delta}}
\newcommand{\dd}{\Delta}
\newcommand{\db}{b}
\newcommand{\Wu}{\mathbb{W}[\![u_1]\!]}
\DeclareSymbolFontAlphabet{\scr}{rsfs}
\newcommand{\sC}{\scr{C}}
\newcommand{\sE}{\scr{E}}
\newcommand{\smsh}{\wedge}
\newcommand{\ra}{\rightarrow}
\newcommand{\xra}{\xrightarrow}
\newcommand{\ox}{\otimes}
\newcommand{\ot}{\otimes}
\def\quickop#1{\expandafter\newcommand\csname #1\endcsname{\operatorname{#1}}}
\DeclareMathOperator{\Gal}{Gal}
\def\@mkboth#1#2{}
\newlength\appendixwidth
\preto\appendix{\addtocontents{toc}{\protect\patchl@section}}
\newcommand{\patchl@section}{%
  \settowidth{\appendixwidth}{\textbf{Appendix }}%
  \addtolength{\appendixwidth}{1.5em}%
  \patchcmd{\l@section}{1.5em}{\appendixwidth}{}{\ddt}%
}
\journal{Advances in Mathematics}
\begin{document}

\begin{frontmatter}



\title{Towards the homotopy of the $K(2)$-local Moore spectrum at $p=2$}


\author{Agn\`es Beaudry}
\address{Department of Mathematics, University of Colorado Boulder}

\begin{abstract}
Let $V(0)$ be the mod $2$ Moore spectrum and let $\mathcal{C}$ be the supersingular elliptic curve over $\mathbb{F}_4$ defined by the Weierstrass equation $y^2+y=x^3$. Let $F_{\mathcal{C}}$ be its formal group law and $E_{\mathcal{C}}$ be the spectrum classifying the deformations of $F_{\mathcal{C}}$. The group of automorphisms of $F_{\mathcal{C}}$, which we denote by $\mathbb{S}_{\mathcal{C}}$, acts on $E_{\mathcal{C}}$. Further, $\mathbb{S}_{\mathcal{C}}$ admits a surjective homomorphism to $\mathbb{Z}_2$ whose kernel we denote by $\mathbb{S}_{\mathcal{C}}^1$. The cohomology of $\mathbb{S}_{\mathcal{C}}^1$ with coefficients in $(E_{\mathcal{C}})_*V(0)$ is the $E_2$-term of a spectral sequence converging to the homotopy groups of $E_{\mathcal{C}}^{h\mathbb{S}_{\mathcal{C}}^1}\wedge V(0)$, a spectrum closely related to $L_{K(2)}V(0)$. In this paper, we use the algebraic duality resolution spectral sequence to compute an associated graded for $H^*(\mathbb{S}_{\mathcal{C}}^1;(E_{\mathcal{C}})_*V(0))$. These computations rely heavily on the geometry of elliptic curves made available to us at chromatic level $2$. 
\end{abstract}

\begin{keyword}
$K(2)$--local \sep Moore spectrum \sep Morava stabilizer group  



\end{keyword}

\end{frontmatter}
\tableofcontents

\section{Introduction}
This paper can be read as a sequel to \cite{Paper1}. For this reason, this section builds upon the deeper discussion of \cite[Section 2]{Paper1}. We give an overview of the tools that were not introduced in the prequel and state our results. More background and motivation can be found in \cite{Paper1}.
\subsection{Background}
In this paper, we work at the prime $p=2$. Recall that Morava $K$-theory $K(2)$ is the unique ring spectrum with coefficients $K(2)_* = \F_{2}[v_2^{\pm 1}]$,
for $v_2$ in degree $6$, and with formal group law the Honda formal group law $F_2$ of height $2$. The group $\Sn_2$ is the group of automorphisms of $F_2$ over $\F_{4}$. The extended Morava stabilizer group $\G_2$ is the extension of $\Sn_2$ by the Galois group.  Morava $E$-theory $E_2$ is the Landweber exact spectrum for which $\pi_0E_2$ corepresents isomorphism classes of deformations of $F_{2}$. Its homotopy groups can be described as follows. Let $\zeta$ be a primitive third root of unity and let
\begin{equation*}
\W := W(\F_{4})\cong\Z_2[\zeta]
\end{equation*}
be the Witt vectors on $\F_{4}$. Then $(E_{2})_*\cong \W[\![u_1]\!][u^{\pm 1}]$, where $u_1$ has degree zero and $u$ has degree $-2$. The group $\G_{2}$ acts on the spectrum $E_{2}$. For a finite spectrum $X$, $L_{K(2)}X \simeq E_2^{h\G_2} \smsh X$.
Further, for closed subgroups $G$ of $\G_2$ and finite spectra $X$, there are descent spectral sequences
\begin{equation}\label{ANSS0}
E_2^{s,t} := H^s(G, (E_{2})_tX) \Longrightarrow \pi_{t-s}(E_2^{hG}\smsh X).
\end{equation}

The groups $\Sn_2$ and $\G_2$ both admit a surjective homomorphism to $\mathbb{Z}_2$
whose kernels are denoted by $\Sn_2^1$ and $\G_2^1$ respectively and
\begin{align}\label{eqn:SS21Z2}
\Sn_2 &\cong \Sn_2^1 \rtimes \Z_2, & 
\G_2 &\cong \G_2^1 \rtimes \Z_2.
\end{align}
The group $\Sn_2$ has a unique conjugacy class of maximal finite subgroups, which can be described as follows. The automorphism of $F_2$ given by $[-1]_{F_2}(x)$ generates a central subgroup $C_2$. The power series $\omega(x) = \zeta x$ generates a subgroup of order three in $\Sn_2$, denoted $C_3$. The group $C_3$ acts on a quaternion subgroup $Q_8$ of $\Sn_2$ whose center is $C_2$. The semi-direct product $G_{24} = Q_8 \rtimes C_3$ is a maximal finite subgroup of $\Sn_2$. The subgroup $C_6=C_2 \times C_3$ of $G_{24}$ will also play a central role.

Both $C_6$ and $G_{24}$ are contained in $\Sn_2^1$. However, $\Sn_2^1$ has two conjugacy classes of maximal finite subgroups. A representative for the other conjugacy class is given by $G_{24}' = \pi G_{24} \pi^{-1}$ for $\pi$ a topological generator of $\Z_2$ in the decomposition $\mathbb{S}_2 \cong \mathbb{S}_2^1 \rtimes \mathbb{Z}_2$.

The next theorem follows from \cite[Theorem 1.2.1, Theorem 1.2.4, Corollary 3.4.6]{Paper1}. (See \fullref{sec:E1} below for more details).
\begin{thm}\label{thm:resnormS2}
Let $\Z_2$ be the trivial $\Z_2[\![\Sn_{2}^1]\!]$--module. There is an exact sequence of complete left $\Z_2[\![\Sn_{2}^1]\!]$--modules
\begin{align*}
0 \rightarrow \sC_3  \xra{\partial_3} \sC_2  \xra{\partial_2}\sC_1  \xra{\partial_1}\sC_0 \xra{\varepsilon} \Z_2 \rightarrow 0, 
\end{align*}
where $\sC_0 \cong  \Z_2[\![\Sn_{2}^1/G_{24}]\!]$, $\sC_3 \cong  \Z_2[\![\Sn_{2}^1/G_{24}']\!]$ and $\sC_1 \cong  \sC_2 \cong  \Z_2[\![\Sn_{2}^1/C_6]\!]$. Let $e$ be the unit in $\Z_2[\![\Sn_{2}^1]\!]$ and $e_p$ be the resulting generator of $\sC_p$. The maps $\partial_p$ can be chosen to satisfy:
\begin{enumerate}[(a)]
\item $\partial_1(e_1) = (e-\alpha)e_0$,
\item $\partial_2(e_2) \equiv ( e+\alpha +\sE) e_1$ for $\sE \in (2, (IS_{2}^1)^2)$. 
\item $\partial_3(e_3) =  \pi (e+i+j+k)(e-\alpha^{-1}) \pi^{-1} e_2$.
\end{enumerate}
Let $F_0 = G_{24}$, $F_{1} =F_2 = C_6$ and $F_3 = G_{24}'$. For a profinite $\mathbb{Z}_2[\![\Sn_2^1]\!]$--module $M$, there is a first quadrant spectral sequence
 \[E_1^{p,q} = \Ext^q_{\Z_2[\![\Sn_2^1]\!]}(\sC_p, M) \cong H^q(F_p, M) \Longrightarrow H^{p+q}(\Sn_2^1,M)\]
with differentials $d_r : E_r^{p,q}\ra E_r^{p+r,q-r+1}$.
\end{thm}
\begin{rem}
The exact sequence of \fullref{thm:resnormS2} is called the \emph{algebraic duality resolution} because it satisfies a certain duality. This is described in Theorem 1.2.2 of \cite{Paper1}. 
The associated spectral sequence is called the \emph{algebraic duality spectral sequence} which we abbreviate as ADSS.
\end{rem}

Let $V(0)$ be the mod $2$ Moore spectrum, that is, the cofiber of multiplication by $2$ on the sphere spectrum $S^0$.  
The goal of this paper is to compute the $E_{\infty}$-term of the ADSS for $M=(E_{2})_*V(0)$. We obtain an associated graded for $H^*(\Sn_2^1;(E_2)_*V(0))$. Taking the Galois fixed points of the $E_{\infty}$-term gives an associated graded for the cohomology $H^*(\G_2^1;(E_{2})_*V(0))$. Therefore, this computation can be used to understand the $E_2$--term of the descent spectral sequence (\ref{ANSS0}) when $G = \G_2^1$ and $X = V(0)$, that is
\[ H^s(\G_2^1 ; (E_{2})_tV(0) ) \Longrightarrow \pi_{t-s}(E_2^{h\G_2^1} \smsh V(0)).\]
Further, recall that there is a fiber sequence
\[ L_{K(2)}V(0) \ra E_2^{h\G_2^1}\smsh V(0)  \ra E_2^{h\G_2^1}\smsh V(0).\]
Hence, computing $H^*(\Sn_2^1;(E_2)_*V(0))$ is a first step for computing $\pi_*L_{K(2)}V(0)$.

The computations will be done using the fact that, at chromatic level $n=2$, one can replace Morava $K$-theory $K(2)$ by a spectrum $K_{\cC}$ whose formal group law is the formal group law of a supersingular elliptic curve $\cC$. This allows us to use the geometry of elliptic curves to get a better understanding of the action of the Morava stabilizer group $\Sn_2$ on $(E_2)_*$. Before stating the results, we explain this point of view. 

Consider the supersingular elliptic curve $\mathcal{C} : y^2+y=x^3$ defined over $\F_4$. Let $F_{\mathcal{C}}$ be the formal group law of $\cC$. It satisfies
$[-2]_{F_{\mathcal{C}}}(x) = x^4$. 
Let $K_{\cC}$ denote the complex oriented ring spectrum whose ring of coefficients is $({K_{\mathcal{C}}})_* = \F_4[u^{\pm 1}]$, 
where $u$ is in degree $-2$, and whose formal group law is $F_{\mathcal{C}}$. 

In this paper, $E_{\cC}:= E(\F_4, F_{\cC})$ will denote the complex oriented Landweber exact spectrum for which $\pi_0E_{\mathcal{C}}$ corepresents isomorphism classes
of deformations of $F_{\mathcal{C}}$. There is an abstract isomorphism
$(E_{\cC})_* \cong (E_2)_*$, but it cannot be realized by a map of $E_{\infty}$-ring spectra. Such a map would induce an $\F_4$-isomorphism on the formal group laws $F_{\cC}$ and $F_2$. These formal group laws are not isomorphic over $\F_4$, but become isomorphic after passing to the algebraic closure of $\F_2$.

Let $\Sn_{\cC}:= \Aut(F_{\mathcal{C}})$ be the group of automorphisms of $F_{\mathcal{C}}$ over $\F_4$. The groups $\Sn_2$ and $\Sn_{\mathcal{C}}$ are isomorphic. An explicit isomorphism is constructed in \fullref{thm:isoS2SC}. The group $\Sn_{\cC}$ admits an action of the Galois group and the group $\G_{\cC}$ is the extension of $\Sn_{\cC}$ by this action. The group $\G_{\cC}$ acts on the deformations. By the Goerss-Hopkins-Miller Theorem (see Goerss and Hopkins \cite[Section 7]{GMmod}), it acts on $E_{\cC}$ by maps of ${E}_{\infty}$-ring spectra.

The isomorphism of \fullref{thm:isoS2SC} does not extend to an isomorphism of the groups $\G_2$ and $\G_{\cC}$. In fact, these groups are not isomorphic. However, over an algebraic closure of $\F_2$, the formal group laws $F_2$ and $F_{\mathcal{C}}$ are isomorphic. Therefore, the Bousfield classes of $K(2)$ and $K_{\cC}$ are the same.
Their localization functors are weakly equivalent, so that
$L_{K(2)}X \simeq L_{K_{\cC}}X$. As before, it follows from the work of Devinatz and Hopkins in \cite{MR2030586} that for $X$ a finite spectrum $L_{K_{\cC}} X \simeq E_{\cC}^{h\G_{\cC}}\smsh X$. Further, for any closed subgroup $G$ of $\G_{\cC}$, there is a spectral sequence analogous to (\ref{ANSS0}). 

The groups $\Sn_{\cC}$ and $\G_{\cC}$ also admit a surjective homomorphism to $\Z_2$ and $\Sn_{\cC}^1$ and $\G_{\cC}^1$ are defined to be the kernel of this homomorphism as before. Further, since $\Sn_{\cC}$ is isomorphic to $\Sn_{2}$, the results of \cite{Paper1} also hold for $\Sn_{\cC}$. In particular, the resolution of \fullref{thm:resnormS2} can be constructed using $\Sn_{\cC}^1$ and the algebraic duality resolution gives rise to an algebraic duality resolution spectral sequence 
\begin{equation}\label{eqn:ADRSSV0C}
E_1^{p,q} \cong H^q(F_p, M)  \Longrightarrow H^{p+q}(\Sn_{\cC}^1;(E_{\cC})_*V(0)). 
\end{equation}
In this paper, we compute the $E_{\infty}$-term of (\ref{eqn:ADRSSV0C}). 

The main advantage of using $\Sn_{\cC}$ is that the elliptic curve $\cC$ has a large automorphism group.
In fact, $\Aut(\cC)$ is isomorphic to $G_{24}$ and it injects into $\Aut(F_{\cC})$. Its image is a choice of maximal finite subgroup. Using level structures, Strickland has computed the action of $\Aut(\cC)$ on $(E_{\cC})_*$. We use this result and, since it is not in print, we describe it \fullref{subsec:AutC}. From now on, we will let $G_{24}$ denote the image of $\Aut(\cC)$ in $\Sn_{\cC}$.

\subsection{Statement of Results} In order to state the results, we will describe the $E_1$-term of (\ref{eqn:ADRSSV0C}). First, note that
$(E_{\cC})_*V(0) \cong \F_4[\![u_1]\!][u^{\pm1}]$, where $u_1$ has degree $0$ and $u$ has degree $-2$. Let $F_{E_{\cC}}$ be the graded formal group law of $E_{\cC}$. Then
\begin{align*}
[2]_{F_{E_{\cC}}}(x) \equiv u_1u^{-1}x^2+\ldots \mod (2),
\end{align*}
(see \fullref{sec:appC}), hence we define $v_1 = u_1 u^{-1}$ in $(E_{\cC})_*V(0)$. The element $v_1$ is invariant under the action of $\Sn_{\cC}$ on $(E_{\cC})_*V(0)$.
Let $\delta$ be the connecting homomorphism associated to the exact sequence
\[0 \ra (E_2)_* \xra{2} (E_2)_* \ra (E_2)_*V(0) \ra 0. \]
Let $\eta = \delta(v_1)$ and $v_2 = u^{-3}$. Then
\begin{align*}
H^*(C_{6} ; (E_{\cC})_*V(0)) \cong \F_4[\![u_1^3]\!][v_1, v_2^{\pm 1}, h]/(v_2^{-1}v_1^3 = u_1^3),
\end{align*}
for a class $h$ in $H^1(C_{6}; (E_{\cC})_*V(0))$ satisfying $\eta  = hv_1$. In particular,
$\{v_2^{n}h^s\}_{n \in \Z}$ is a set topological generators of $H^s(C_{6} ; (E_{\cC})_*V(0))$ as an $\F_4[v_1]$--module.
That is, in the category of profinite graded $\F_4[v_1]$--modules, there is an isomorphism
\[H^s(C_{6} ; (E_{\cC})_*V(0)) \cong  \prod_{n \in \Z} \F_4[v_1]\{v_2^n h^s\} .\]

The cohomology $G_{24}$ is related to the cohomology of the Hopf algebroid classifying Weierstrass curves over $\mathbb{F}_4$ with their strict isomorphisms, a computation originally due to Hopkins and Mahowald and presented by Bauer in \cite{tbauer}. In particular, the $G_{24}$ fixed points are related to modular forms modulo $2$. However, we have included a self--contained computation of $H^*(G_{24}, (E_{\cC})_*V(0))$ in an appendix (see \ref{sec:appcoh}). This computation is based on unpublished notes of Hans-Werner Henn. In \ref{sec:appcoh}, it is shown that there is an isomorphism
\[ H^*(G_{24}, (E_{\cC})_*V(0))\cong \F_4[\![j]\!][v_1,\Delta^{\pm 1}, k, \eta, \nu, x, y]/(\sim)\]
where $\Delta$ of degree $(0, 24)$ is the reduction modulo $(2)$ of the discriminant of a universal deformation of $\cC$ over $(E_{\cC})_*$ (see \fullref{thm:CU}), $j$ of degree $(0,0)$ is the $j$--invariant of this deformation. The element $k$ in $ H^4(G_{24}, (E_{\cC})_*V(0))$ is the image of the periodicity generator in $ H^4(G_{24}, \F_4) \cong  H^4(Q_8, \F_4)^{C_3}$ under the natural inclusion of $\F_4$ into $(E_{\cC})_0V(0)$.
The class $\nu$ has degree $(1,4)$, $x$ has degree $(1,8)$ and $y$ has degree $(1,16)$.
The relations $(\sim)$ contain $ \eta^4=v_1^4 k$ and $ v_1^{12}=j\Delta$. We refer the reader to \fullref{TBG24} for the complete ideal of relations.

A set of topological $\F_4[v_1]$--module generators for $H^0(G_{24},(E_{\cC})_*V(0))$ is given by $\{\Delta^n\}_{n\in \Z}$ so that, in the category of profinite graded $\F_4[v_1]$--modules,
\[H^0(G_{24} ; (E_{\cC})_*V(0)) \cong  \prod_{n \in \Z} \F_4[v_1]\{\Delta^n\} .\]
Note that conjugation by $\pi$ induces an $\F_4[v_1,\eta]$--linear isomorphism 
\[H^*(G_{24}', (E_{\cC})_*V(0)) \cong H^*(G_{24}, (E_{\cC})_*V(0)).\] 
We let $\Delta'$ and $j'$ be the image of $\Delta$ and $j$ under this isomorphism. For $z$ in positive cohomological dimension in $H^*(G_{24}, (E_{\cC})_*V(0))$, we abuse notation and denote its image under the conjugation isomorphism by the same name. In this spirit, letting $k$ act on the left via this isomorphism, we will treat $H^*(G_{24}', (E_{\cC})_*V(0))$ as an $\F_4[v_1,\eta,k]$--module (see \fullref{rem:deltaprime}).

Finally, $H^*(C_{6}, (E_{\cC})_*V(0))$ is an $\F_4[v_1, \eta, k]$--module where $k$ acts by multiplication by $h^4$ and $\eta$ by multiplication by $v_1h$.

In the following result, we adopt notation similar to that of Henn, Karamanov and Mahowald \cite[Theorem 1.2]{HKM}.
\begin{thm}
\label{thm:E2}
The ADSS converging to $H^*(\Sn_2^1,(E_{\cC})_*V(0))$ collapses at the $E_2$-term. The spectral sequence is an $\F_4[v_1, \eta, k]/(\eta^4-v_1^4k)$--module. There exist $\F_4[v_1]$-generators $\Delta_n \in E_1^{0,0}$, $b_n \in E_1^{1,0}$, $\ob_n \in E_1^{2,0}$ and $\od_n \in E_1^{3,0}$ with
\begin{align*}
\Delta_n & \equiv \Delta^n  &  \db_n &\equiv \ob_n \equiv v_2^n,  & \od_n &\equiv (\Delta')^n
\end{align*}
where the congruences are modulo the ideal $(v_1)$
and such that, for $r\geq 0$ and $t \in \Z$,
\begin{align*}
d_1( \dd_n) & =\begin{cases}
v_1^{6\cdot 2^{r}}\db_{2^{r+1}(1+4t)}  & \text{$n = 2^r(1+2t)$} \\
    0 &  \text{$n=0$}.\end{cases} \\ 
d_1( \db_{n}) & =\begin{cases}
v_1^{3\cdot 2^{r}}\ob_{2^{r+1}(1+2t)}  & \text{$n = 2^r(3+4t)$} \\
     v_1^{3\cdot 2^{r+1}}\ob_{1+2^{r+1}(1+4t)} & \text{$n = 1+2^{r+2}(1+2t)$} \\
    0 & \text{otherwise} \end{cases} \\ 
 d_1( \ob_{n}) & =\begin{cases}
 v_1^{3(2^{r+1}+1)}\od_{2^{r}(1+2t)}   & \text{$n=1+2^{r+1}(3+4t)$} \\
0 & \text{otherwise}.\end{cases}
 \end{align*}
For $q>0$, a the differential $d_1 : E_1^{p,q} \ra E_{1}^{p,q}$ is non-zero if and only if it is forced by $\eta$--linearity and the $\F_4[v_1]$--module structure. All differentials $d_r : E_r^{p,q} \ra E_{r}^{p+1,q}$ for $r\geq 2$ are zero, so that $E_2 =E_{\infty}$.
\end{thm}

\fullref{thm:E2des} below gives an explicit description of the $E_{\infty}$--term for the interested reader.
\fullref{thm:E2} and \fullref{thm:E2des} are also displayed in \fullref{fig:SSRS-E1} and \fullref{fig:SSRS-E2}.
\begin{thm}\label{thm:E2des}
As an $\mathbb{F}_4[v_1, k]$--module, the $E_{\infty}$--term of the ADSS with coefficients in $(E_{\cC})_*V(0)$ is isomorphic to a direct sum of cyclic modules generated by the following elements and with the following annihilator ideals. 
\begin{enumerate}[(a)]
\item For $E_{\infty}^{0, *}$,
\begin{align*}
&\eta^{s} \dd_0  &  & 0\leq s\leq 3 & &  \\
&\nu^{s}  \dd_t &  & 1 \leq s\leq 3, \ t\in \Z &  & (v_1) \\
& \eta^{s} x^r \dd_t  & & 1 \leq r \leq 2, \ 0\leq s \leq 1, \ t\in \Z  & & (v_1^2) \\
&  \nu^s y \dd_t   & & 0 \leq s \leq 2, \ t\in \Z & & (v_1) 
\end{align*}
\item For $E_{\infty}^{1, *}$,
\begin{align*}
& h^q \db_{s}  &  & 0 \leq s \leq 1, \ 0 \leq q \leq 3 & &  \\
&h^q \db_{2^{r+1}(1+4t)}   &  & 0\leq r, \ 0 \leq q \leq 3  &  & (v_1^{6 \cdot 2^r+\overline{q}}) 
\end{align*}
\item For $E_{\infty}^{2, *}$,
\begin{align*}
& h^q  \ob_{s}  &  & 0 \leq s \leq 1, \ 0 \leq q \leq 3 &  \\
&h^q  \ob_{2^{r+1}(1+2t)}   &  & 0\leq r, \ t \in \Z, \  0 \leq q \leq 3  &  & (v_1^{3\cdot 2^{r}})  \\
& h^q \ob_{1+2^{r+1}(1+4t)}  & &0\leq r, \ t \in \Z, \   0 \leq q \leq 3  & & (v_1^{3\cdot 2^{r+1}})
\end{align*}
\item For $E_{\infty}^{3, *}$,
\begin{align*}
&  \eta^q \od_0 & & 0 \leq q \leq 3 & &  \\
& \eta^q \od_{2^{r}(1+2t)}   & & 0\leq r, \ t \in \Z, \  0 \leq q \leq 3   & & (v_1^{3 \cdot (2^{r+1}+1)-q}) \\
&\nu^{s} \od_t  &  & 1 \leq s\leq 3, \ t\in \Z &  & (v_1) \\
&\eta^{s} x^r \od_t   & & 1 \leq r \leq 2, \ 0\leq s \leq 1, \ t\in \Z  & & (v_1^2) \\
&  \nu^s y  \od_t  & & 0 \leq s \leq 2, \ t\in \Z & & (v_1) 
\end{align*} 
\end{enumerate}
\end{thm}
If one inverts $v_1$, the situation is much simpler. The following result is an immediate consequence of \fullref{thm:E2} and \fullref{thm:E2des}.
\begin{cor}\label{cor:v1inv}
The spectral sequence
\begin{equation*}
v_1^{-1}E_1^{p,q} = v_1^{-1}H^q(F_p, (E_{\cC})_*V(0))   \Longrightarrow v_1^{-1}H^{p+q}(\Sn_{\cC}^1;(E_{\cC})_*V(0))
\end{equation*}
collapses at the $E_2$--term. As an $\F_4[v_1^{\pm 1},\eta]$--module,
\[v_1^{-1}H^*(\Sn_{\cC}^1, (E_{\cC})_*V(0)) \cong \F_4[v_1^{\pm 1},\eta]\{\dd_0, \db_0, \db_1, \ob_0, \ob_1, \od_0\} .\]
Let $s$ be the cohomological degree and $t$ be the internal degree. Then the $(s,t)$--degrees of the generators are $|\dd_0| =(0,0)$, $|\db_0| = (1,0)$, $|\db_1| = (1,6)$, $|\ob_0| = (2,0)$, $|\ob_1| = (2,6)$ and $|\od_0 |= (3,0)$.
\end{cor}
\noindent
This result is important because $v_1^{-1}H^*(\Sn_{\cC}^1, (E_{\cC})_*V(0)) $ is the $E_2$--term of a spectral sequence that computes the homotopy groups of $L_1(E_2^{h\Sn_{\cC}^1} \wedge V(0))$. This spectrum plays a central role in the study of the chromatic splitting conjecture at $n=p=2$.

It is worth mentioning here that the related computation of the $E_2$-page of the Johnson-Wilson $E(2)$-local Adams-Novikov spectral sequence converging to $L_2S$ was done by Shimomura and Wang in \cite{MR1935487}. These computations were done independently. However, historically, they depend on the work of Shimomura and Wang. Indeed, results similar to those of \fullref{thm:E2} can be extracted from \cite{MR1935487}, and it is using Shimomura and Wang's computation that Mahowald conjectured the existence of the duality resolution for the $K(2)$-local sphere. 

\subsection{Organization of the paper}

In \fullref{subsec:SSC}, we describe a choice of universal deformation $F_{\cC_U}$ of $F_{\cC}$, where $F_{\cC_U}$ is the formal group law of an elliptic curve $\cC_{U}$. This allows us to define $E_{\cC}$. The choice for the curve $\cC_{U}$ is due to Strickland and, in \fullref{subsec:AutC}, we present his formulas for the right action of $\Aut(\cC)$ on $(E_{\cC})_*$. In \fullref{sec:ActGn}, we tie this to the right action of $\Aut(F_{\cC})$ on $(E_{\cC})_*$.
In \fullref{sec:leftact} we adopt conventions that allow us to use the corresponding left action as in Henn, Karamanov and Henn \cite{HKM} and settle our notation for the rest of the paper.

\fullref{sec:TvS} is dedicated to describing the structure of $\Sn_{\cC}$. In \fullref{subsec:isoS2SC}, we give an explicit isomorphism between the group of automorphisms $\Sn_2$ of the Honda formal group law and the group $\Sn_{\cC}$. In \fullref{subsec:filt-norm}, we recall the standard filtration on $\Sn_{\cC}$. In \fullref{subsec:act}, we give the information about the action $\Sn_{\cC}$ on $(E_{\cC})_*$ that will be used in the computation of $H^*(\Sn_{\cC}^1, (E_{\cC})_*V(0))$.

The goal of \fullref{sec:E1} is to introduce the ADSS for $\Sn_{\cC}$ and to give the information necessary to begin the computation. The ADSS is not multiplicative, but it has some nice properties which we describe in \fullref{subsec:ss}. In \fullref{subsec:E1}, we describe the $E_1$-term.  The discriminant $\Delta$ of the curve $\cC_U$ has useful linearity properties which are given in \fullref{subsec:Dlin}.

The bulk of the paper is the computation of the $E_{\infty}$-term of the ADSS with coefficients in $(E_{\cC})_*V(0)$. This is done in \fullref{sec:compe2}. In \fullref{subsec:d10}, \fullref{subsec:d12} and \fullref{subsec:d13}, we compute the differentials $d_1 : E_1^{p,0} \ra E_{1}^{p+1,0}$. In \fullref{subsec:d1rest}, we compute the differentials $d_1 : E_1^{p,q} \ra E_1^{p+1,q}$ for $q>0$. In \fullref{sec:higher}, we prove that all differentials $d_r : E_r^{p,q} \ra E_r^{p,1}$ for $r\geq 2$ are zero.

In \fullref{sec:appC}, we describe the action of $\Sn_{\cC}$ on $(E_{\cC})_*$ and deduce the formulas used in our computations. We give formulas for the minus two series of $\cC$ and $\cC_{U}$ and use them to give estimates for the action. 

\ref{sec:appcoh} gives a self--contained computation of the cohomology of $G_{24}$ with coefficients in $(E_{\cC})_*V(0)$. It consists of unpublished notes of by Hans-Werner Henn, which were edited by the author. The author is grateful for his blessing to include them in this document.

\subsection{Acknowledgements} 
I thank Paul Goerss, Hans-Werner Henn and Peter May for their unfaltering support. I thank Neil Strickland for his unpublished notes, which were crucial to these computations. I thank Mark Mahowald whose insight continues to inspire me.

This material is based upon work supported by the National Science Foundation under Grant No. DMS-1612020.

\section{Morava $E$-theory and Elliptic Curves}\label{sec:Etheory}

In this section, we define the spectrum $E_{\cC}$ and compute the action of $\Aut(\cC)$ on $(E_{\cC})_*$. This computation is due to Strickland. For the deformation theory of formal group laws, we refer the reader to Rezk \cite{MR1642902} or Goerss and Hopkins \cite[Section 7]{GMmod}. In particular, if $F$ is a formal group law over a field $k$, then $E(F,k)$ denotes the associated Lubin-Tate spectrum.

\subsection{The supersingular elliptic curve}\label{subsec:SSC} 

Consider the elliptic curve $\cC : y^2+y = x^3$ over $\F_4$. It is a standard fact that the formal group law of $\cC$ has height $2$ (see Silverman \cite[Section V.4]{silverman} or \fullref{minustwomod2} below) so that $\cC$ is a supersingular elliptic curve. Elliptic curves over fields of characteristic $p>0$ admit a theory of deformations which is analogous to that of formal group laws. It follows from the Serre--Tate theorem that the deformation theory of a supersingular elliptic curve is equivalent to that of its formal group law. However, in our case, we make this concrete by the following result.
\begin{thm}
\label{thm:CU}
The formal group law $F_{\cC_U}$ of the elliptic curve
\begin{align*}
\cC_U : y^2+3u_1xy+(u_1^3-1)y = x^3
\end{align*}
defined over $\W[\![u_1]\!]$ is a universal deformation of $F_{\cC}$.
This specifies a Lubin--Tate spectrum $E_{\cC}=E(F_{\mathcal{\cC}}, \F_4)$ with an isomorphism
$(E_{\cC})_* \cong \W[\![u_1]\!][u^{\pm 1}]$, where $u$ has degree $-2$ and $u_1$ has degree zero,
and a graded formal group law 
\begin{align*}
F_{E_{\cC}} = uF_{\cC_U}(u^{-1}x, u^{-1}y).
\end{align*}
\end{thm}
\begin{proof}
We can verify directly that $F_{\cC_U}$ satisfies the criteria for a universal deformation (see Lubin and Tate \cite[Proposition 1.1]{lubintate}). Indeed, in \fullref{intminustwo} below, we compute that
\[F_{\cC_U}(x,y) \equiv x+y-3u_1xy-2(u_1^3-1)xy(x^2+y^2)-3(u_1^3-1)x^2y^2 \mod (x,y)^5 \]
so that
\begin{align*}
F_{\cC_U}(x,y) &\equiv x+y+ u_1 xy \mod (2, (x,y)^3) \\
F_{\cC_U}(x,y) &\equiv x+y+ x^2y^2 \mod (2, u_1, (x,y)^5).\qedhere
\end{align*}
\end{proof}

\begin{rem}
A more obvious choice of universal deformation of $\cC$ is
\[ \cC_{U}': y^2+u_1xy+y =x^3\]
defined over $ \W[\![u_1]\!]$.
Lubin and Tate \cite[Section 3.5]{lubintate} prove that the formal group law associated to this curve is a universal deformation of $F_{\cC}$. However, the choice of the curve $\cC_U$, due to Strickland, yields nice formulas for the action of $\Aut(\cC)$ on $(E_{\cC})_*$. 
For $\varphi$ the $\W$--linear isomorphism of $\Wu$
determined by $\varphi(u_1) =-3u_1 (1-u_1^3)^{-\frac{1}{3}}$,
the change of coordinates
\begin{align*}
x &=(u_1^3-1)^{-\frac{2}{3}}x' & y&=(u_1^3-1)^{-1}y' \end{align*}
is an isomorphism from $\cC_U$ to $\varphi^*\cC_{U'}$, where $(1-u_1^3)^{-\frac{1}{3}}$ is interpreted as its Taylor expansion.
\end{rem}

\subsection{The automorphisms of $\cC$}\label{subsec:AutC}

The group $\Aut(F_{\cC})$ acts on $(E_{\cC})_*$ and, hence, so does its subgroup $\Aut(\cC)$. In unpublished notes, Strickland has computed the right action of the group $\Aut(\cC)$ on $(E_{\cC})_*$. We explain his results in this section. 

The automorphisms of the supersingular curve $\cC$ are computed in Silverman \cite[Appendix A]{silverman}. The results are stated here without proof.
Fix a primitive third root of unity $\zeta \in \F_4$. For the curve $\cC$ over $\F_4$, the group $\Aut(\cC)$ is generated by the maps on points in the affine chart given by
\begin{align*}
a(x,y) &= (\zeta^2 x,\zeta^3y),\\
b(x,y)&= (x+1,y+x+\zeta^2) .
\end{align*}
The elements $a$ and $b$ generate a group of order $24$ which will be described in more details at the end of \fullref{sec:leftact}.

Let $\psi_{a}, \psi_b :(E_{\cC})_0 \to (E_{\cC})_0 $ be the $\W$--linear isomorphisms determined by
\begin{align}\label{eqn:actab}
   \psi_{a}(u_1) &=\zeta u_1, &
 \psi_{b}(u_1) &= \frac{u_1+2}{u_1-1}.
\end{align}
Since $a$ and $b$ are generators of $\Aut(\cC)$, this determines a right action of $\Aut(\cC)$ on $(E_{\cC})_0$. The action of $\gamma$ is denoted by $\psi_{\gamma}$.

For $\ell$, $r$, $s$ and $t$ in $\Wu$ and $\ell$ a unit, let 
\[A(\ell, r,s,t) : \mathbb{P}^2(\Wu) \to \mathbb{P}^2(\Wu)\] 
be the automorphism
\[A(\ell, r,s,t)[x:y:z] = [\ell^2x+rz: \ell^3 y + \ell^2 s x + tz:z].\] 
Let
\begin{align*}
f_{a} &= A(\zeta, 0,0,0)\\
f_b &= A\left(\frac{\zeta^2-\zeta}{u_1-1},3\frac{1-u_1^3}{(u_1-1)^3} ,3\frac{\zeta^2u_1-1}{u_1-1} ,3\frac{u_1^3-1}{(u_1-1)^4}((1-\zeta)+(1-\zeta^2)u_1) \right).
\end{align*}
This determines a left action of $\Aut(\cC)$ on $\mathbb{P}^2(\Wu)$ and, for $\gamma$ in $\Aut(\cC)$, let $f_{\gamma} = A(\ell_{\gamma}, r_\gamma,s_\gamma,t_\gamma)$ be the corresponding automorphism. Then $f_{\gamma}$ induces an isomorphism $f_{\gamma} : \cC_{U} \to \psi_{\gamma}^*\cC_{U}$ which pulls back to $\gamma$ over $\F_4$.

The pair $(f_{\gamma}, \psi_{\gamma})$ induces an isomorphism of formal group laws. (For the formal group law associated to an elliptic curve, see Silverman \cite[Chapter IV]{silverman}). We abuse notation and denote this isomorphism 
\[ f_{\gamma} : F_{\cC_U} \ra \psi_{\gamma}^*F_{\cC_{U}} \]
by $(f_{\gamma}, \psi_{\gamma})$ for $f_{\gamma}$ the corresponding power series in $(E_{\cC})_0 [\![x]\!]$. One can verify directly by chasing through the change of coordinates in \cite[Chapter IV]{silverman} that
\begin{equation}\label{eqn:f'0}
f_{\gamma}'(0) =  \ell_{\gamma}^{-1}.
\end{equation}

\subsection{The right action of $\Sn_{\cC}$}\label{sec:ActGn}

We first describe a right action $\Sn_{\cC} = \Aut(F_{\cC})$
on $E_{\cC_U}$, following Rezk \cite{MR1642902} as this meshes well with the geometry of the elliptic curves. However, it will be convenient in our computations to have a left action of $\Sn_{\cC}$ and the reader should be warned that in \fullref{sec:leftact}, we will make the switch from a right to a left action and adopt the conventions of Henn, Karamanov and Mahowald in \cite[Section 4]{HKM}.

Throughout, for $\psi: R_1 \ra R_2$ a ring homomorphism and $F$ a formal group law over a ring $R_1$, then
\[\psi^*F(x,y) =  \sum_{i,j} \psi(a_{i,j})x^{i}y^j\] 
is a formal group law over $R_2$. 

An element $\gamma \in \Sn_{\cC}$ is a power series in $\F_4[\![x]\!]$. Let $g$ be any lift of $\gamma$ in  $(E_{\cC})_0[\![x]\!]$. Define a new formal group law by
\begin{align*}
F_{g}(x,y) = g F_{\cC_U}(g^{-1}(x),g^{-1}(y)).
\end{align*}
Then $F_g$ is a deformation of $F_{\cC}$ over $(E_{\cC})_0$ and there is a unique ring isomorphism 
$\psi_{\gamma}: (E_{\cC})_0 \rightarrow (E_{\cC})_0$
and a unique $\star$-isomorphism
$f_g:   F_g \rightarrow (\psi_{\gamma})^*F_{\cC_U}$
which classify $F_g$. If $g'$ is another lift of $\gamma$, then 
\begin{align*}
f_{g'}= f_g \circ (g (g')^{-1}) :  F_{g'} \rightarrow   (\psi_{\gamma})^*F_{\cC_U}
\end{align*}
is a $\star$-isomorphism. Therefore, $\psi_{\gamma}$ is independent of the choice of lift $g$. The composite
\begin{align*}
F_{\cC_U}  \xra{g} F_{g} \xra{f_g}  (\psi_{\gamma})^*F_{\cC_U}
\end{align*}
is also independent of $g$ and is denoted by $f_{\gamma}$. This gives a right action of $\Sn_{\cC}$ on $(E_{\cC})_0$ where $\gamma$ acts via $\psi_{\gamma}$. This action extends to an action of $\Sn_{\cC}$ on $(E_{\cC})_*$ with
\begin{align}\label{eqn:psiuu}
\psi_{\gamma}(u) := f_{\gamma}'(0)^{-1}u
\end{align}
(see Rezk \cite[Section 6.7]{MR1642902}, noting that $u_{F_i}$ in this reference is in degree $2$, while our $u$ is in degree $-2$ and corresponds to the inverse of $u_{F_i}$).
Note that by the Goerss-Hopkins-Miller theorem, this action can be realized through maps of $E_{\infty}$-ring spectra on $E_{\cC_U}$ (see Goerss and Hopkins \cite{GMmod}). 

Further, $\Gal(\F_4/\F_2)$ acts on $\W$; hence, it acts on the coefficients $(E_{\cC_U})_*$. Since $F_{\cC}$ is fixed by $\Gal(\F_4/\F_2)$, this extends the action of $\Sn_{\cC}$ to an action of 
\[\G_{\cC} = \Sn_{\cC} \rtimes \Gal(\F_4/\F_2).\]

Now, let $\gamma$ in $\Sn_{\cC}= \Aut(F_{\cC})$ be induced by an element of $\Aut(\cC)$. The action of $\gamma$ is classified by the pair $(f_{\gamma},\psi_{\gamma})$ described in \fullref{subsec:AutC}.
Hence, the action of $\psi_{\gamma}$ is determined by the formulas (\ref{eqn:actab}), (\ref{eqn:f'0}) and (\ref{eqn:psiuu}). In particular, 
\begin{equation*}
 \psi_{\gamma}(u) =f_\gamma'(0)^{-1} u = \ell_{\gamma} u. \end{equation*}

\subsection{The left action of $\Sn_{\cC}$}\label{sec:leftact}
The left action of an element $\gamma$ in $\Sn_{\cC}$ on $(E_{\cC})_*$ is naturally given by $\psi_{\gamma^{-1}}$. For convenience, we thus adopt the notation
\begin{align*} \phi_{\gamma} &= \psi_{\gamma^{-1}} & h_{\gamma} &= (f_{\gamma^{-1}})^{-1}
\end{align*}
so that $h_{\gamma} : \phi_{\gamma}^*F_{\cC_U} \to F_{\cC_{U}}$. With these conventions, we have a left action of $\Sn_{\cC}$ on $(E_{\cC})_*$ which satisfies
\begin{align}\label{eqn:phiu}
\phi_{\gamma}(u)& = h_{\gamma}'(0)u.
\end{align}
These correspond to the conventions of Henn, Karamanov and Mahowald in \cite[Section 4]{HKM}.

Finally, we fix our notation for the group $G_{24}=\Aut(\cC)$. We let
\begin{align*}
 \omega &= a^{-1}  & i &= b^{-1} 
\end{align*}
so that $\phi_{\omega} = \psi_a$ and $\phi_i = \psi_b$. It then follows that
\begin{align*}
\phi_{\omega}(u_1) &=\zeta u_1, &   \phi_{\omega}(u) &=\zeta u  \\
\phi_i (u_1) &= \frac{u_1+2}{u_1-1}, &  \phi_{i}(u) &= u\frac{\zeta^2-\zeta}{u_1-1}.
\end{align*} 
The element $\omega$ has order $3$ and, from now on, we denote the group it generates by $C_3$. The element $i$ has order four and $i^2=-1$, the inversion of the curve $\cC$. We let
\begin{align*}
j&:=\omega i \omega^2, & k&:=\omega^2 i \omega
\end{align*}
and note that $ij=k$. The elements $i$ and $j$ generate a normal subgroup isomorphic to the quaternions $Q_8$ and hence
$G_{24} \cong \Aut(\cC) = Q_8 \rtimes C_3$.

\section{The Morava stabilizer group}\label{sec:TvS}

The Morava stabilizer group $\Sn_{2}$ is the group of automorphisms of the Honda formal group law $F_2$, which is the $p$-typical formal group law over $\F_4$ specified by the series $\![2]_{F_2}(x) =x^4$. The standard presentation for $\Sn_{2}$ is the non-commutative extension 
\begin{align*}
\Sn_{2} \cong \left(\W\left< S \right>/(S^2=2, aS=Sa^{\sigma})\right)^{\times},
\end{align*}
where $S$ is the automorphism $S(x) =x^2$, $a \in \W$ and $\sigma$ is the Frobenius (see Ravenel \cite[Appendix A2]{MR860042} for more details). In this section, we specify an isomorphism $\Sn_{2} \cong \Sn_{\cC}$,
whose construction is due to Henn. We also recall some of the key properties of the structure of the group $\Sn_{2}$, which transfer to properties of $\Sn_{\cC}$ via this isomorphism.

\subsection{The isomorphism of $\Sn_2$ and $\Sn_{\cC}$}\label{subsec:isoS2SC}
As opposed to the Honda formal group law, it is the $[-2]$-series
of the formal group law $F_{\mathcal{C}}$ which has a nice presentation. The following result is proved in \fullref{minustwomod2} of \fullref{sec:appC}.
\begin{lem}\label{lem:-2}
Let $\cC: y^2+y=x^3$ be defined over a field of characteristic two. If $F_{\mathcal{C}}$ is the associated formal group law, then $[-2]_{F_{\mathcal{C}}}(x) = x^4$.
\end{lem}

The curve $\cC$ and its formal group law $F_{\mathcal{C}}$ are defined over $\F_2$. Therefore, 
$T(x) = x^2$ is an endomorphism of $F_{\mathcal{C}}$. \fullref{lem:-2} implies that $T(T(x)) = [-2](x)$. The element $\omega$ defined in \fullref{subsec:AutC} induces the isomorphism
$\omega(x) = \zeta x $
of $F_{\mathcal{C}}$, so that $\omega T = T\omega^{\sigma}$.
This shows that
\begin{align*}
\W\left< T \right>/(T^2=-2,  \omega T = T \omega^{\sigma}) \subseteq \End(F_{\mathcal{C}})
\end{align*}
and this is in fact an equality (see for example Hazewinkel \cite[Proposition 21.8.17]{MR506881}). Therefore, 
\begin{align*}
\Sn_{\cC} \cong \left(\W\left< T \right>/(T^2=-2,  \omega T = T \omega^{\sigma})\right)^{\times}.
\end{align*}
The action of the Galois group on $\W$ induces an action on $\Sn_{\cC}$ defined by $ \sigma(a+bT) = a^{\sigma}+b^{\sigma}T$.

Let $\alpha = (1-2\omega)(-7)^{-\frac{1}{2}}$, where $(-7)^{\frac{1}{2}}$ in $\W$ is chosen to be congruent to $1$ modulo $(4)$. It follows that $\alpha = 1+\omega T^2 + T^4$ modulo $(T^6)$ and $\alpha \alpha^{\sigma}=-1$. The proof of the following result follows immediately.
\begin{lem}\label{thm:isoS2SC}
The map $\phi: \Sn_{\cC} \rightarrow \Sn_{2}$ given by
$\phi(a+bT) = a+b(\alpha S)$
is an isomorphism.
\end{lem}

\subsection{The filtration and the norm}\label{subsec:filt-norm}
\fullref{thm:isoS2SC} implies that all the results of \cite{Paper1} can be restated for the group $\Sn_{\cC}$ instead of $\Sn_2$. Here, we briefly review the results which will be important for the computations of this paper.

As in \cite{Paper1}, any element $\gamma \in \Sn_{\cC}$ can be expressed as a power series
\begin{align*}
\gamma = \sum_{n=0}^{\infty}a_nT^n,
\end{align*}
where the $a_i$'s satisfy the equation $x^4-x=0$ and $a_0 \neq 0$. Let $F_{0/2}\Sn_{\cC} := \Sn_{\cC}$. For $n>0$, let
\begin{align*}
F_{n/2}\Sn_{\cC} := \{\gamma \in \Sn_{\cC} \ | \ \gamma \equiv 1 \mod T^{n} \ \}.
\end{align*}
Define $S_{\cC} := F_{1/2}\Sn_{\cC}$. Then $S_{\cC}$ is the $2$-Sylow subgroup of $\Sn_{\cC}$. This filtration is compatible with the $2$-adic filtration on $\W^{\times}$. Further, $\{F_{n/2}\Sn_{\cC}\}_{n \geq 0}$ forms a system of open subgroups and $\Sn_{\cC}$ is a profinite topological group. 

The group $\Sn_{\cC}$ acts on $\End(F_{\mathcal{C}})$ by right multiplication. This gives rise to a representation $\rho: \Sn_{\cC} \rightarrow GL_2(\W)$, given by
\begin{align*} \rho(a+bT) =  \left( \begin{matrix}
a & -2b^{\sigma}  \\
b & a^{\sigma} \end{matrix} \right).
\end{align*}
The restriction of the determinant to $\Sn_{\cC}$ is given by $\det(a+bT) = aa^{\sigma}+2bb^{\sigma}$.
Therefore, the determinant induces a map $\det : \Sn_{\cC} \rightarrow \Z_2^{\times}$. The \emph{norm} is defined as the composite
\begin{align*}
N : \Sn_{\cC} \xra{\det} \Z_2^{\times} \rightarrow \Z_2^{\times}/\{\pm1\} \cong \Z_2.
\end{align*}
The norm is split surjective. Indeed, let $\pi = 1+2\omega$.
Then $\det(\pi) =3$ projects to a topological generator of $\Z_2^{\times}/\{\pm1\}$. The subgroup $\Sn_{\cC}^1$ is then defined by the short exact sequence,
\begin{align*}
1 \rightarrow \Sn_{\cC}^1 \rightarrow \Sn_{\cC} \xra{N} \Z_2^{\times}/\{\pm1\} \rightarrow 1,
\end{align*}
and $\Sn_{\cC} \cong \Sn_{\cC}^1 \rtimes \Z_2^{\times}/\{\pm1\}$. Note that $ \Z_2^{\times}/\{\pm1\} \cong \Z_2 $ is torsion-free; hence, $G_{24}$ is a subgroup of $\Sn_{\cC}^1$.

As discussed in \cite[Lemma 2.27]{Paper1}, the group $\Sn_{\cC}$ has a unique conjugacy class of maximal finite subgroups isomorphic to $G_{24}$. The group $\Sn_{\cC}^1$ has two, and they are represented by $G_{24}$ and $G_{24}' = \pi G_{24} \pi^{-1}$.

\subsection{The action of the Morava stablizer group}\label{subsec:act}

In order to compute the cohomology of $\Sn_{\cC}$, we must have some understanding of its action on $(E_{\cC})_*$. Recall from \cite[Theorem 2.29]{Paper1} that $\Sn_{\cC}$ is topologically generated by $G_{24}$, $\pi$ and $\alpha$. Since the action of $G_{24}$ was described in \fullref{subsec:AutC}, it remains to study the action of $\alpha$ and $\pi$.

A concrete method for approximating the action of $\Sn_{\cC}$ on $(E_{\cC})_*$ is explained by Henn, Karamanov and Mahowald in \cite[Section 4]{HKM}. We describe it in \fullref{sec:appC} and give detailed proofs of the results needed for the following computations. For the sake of exposition, we recall the key points here.

It follows from \fullref{int} that, for $\gamma$ in $\Sn_{\cC}$, there exists continuous functions
$t_0 : \mathbb{S}_{\cC} \to (E_{\cC})_0^{\times}$ and $ t_1 : \mathbb{S}_{\cC} \to (E_{\cC})_0$
such that
\begin{align}\label{eqn:actint}
\phi_{\gamma}(u) &= t_{0}(\gamma) u, & \phi_{\gamma}(u_1) &= t_{0}(\gamma)u_1+\frac{2}{3}\frac{t_{1}(\gamma)}{t_0(\gamma)}.
\end{align}
In particular, modulo $(2)$, $\phi_{\gamma}(u_1) \equiv  t_0(\gamma)u_1$ and
$\phi_{\gamma}(u) \equiv t_0(\gamma) u$.
Therefore, $v_1=u_1u^{-1}$ is fixed by the action of $\Sn_{\cC}$ modulo $(2)$.

Any $\gamma \in S_{\cC}$ can be expressed as $\gamma =1+ \sum_{i=1}^{\infty}a_i(\gamma) T^i$ for $a_i(\gamma) \in \W$ satisfying $a_i(\gamma)^4-a_i(\gamma) =0$.
It follows from \fullref{cor:S21modu14} that
\begin{align}\label{eqn:t0gamma12}
t_0(\gamma) \equiv 1+a_1(\gamma)^2u_1+a_1(\gamma)u_1^2 \mod (2,u_1^3).\end{align}
In particular, $\phi_{\gamma} \equiv id \mod (2, u_1)$.

For $\gamma \in F_{2/2}\Sn_{\cC}$, we obtain better approximations. We prove in \fullref{t0gammafinally} that, modulo $(4, 2u_1^2,u_1^{9})$,
\begin{align*}
  t_{0}(\gamma) &\equiv 1+2a_2(\gamma)+2a_3(\gamma)^2u_1+(a_2(\gamma) + a_2(\gamma)^2)u_1^3 +a_3(\gamma)u_1^5 +a_3(\gamma) u_1^8.
\end{align*}
It also follows from \fullref{t0t1again} that $t_1(\gamma) \equiv a_2(\gamma)^2u_1$ modulo $(2, u_1^3)$.

We apply this to study the action of $\alpha$ and $\pi$. Modulo $(T^6)$, we have
\begin{align*}
\alpha &\equiv  1+\omega T^2 +T^4, & \pi &\equiv 1+\omega T^2 +\omega T^4.
\end{align*}
\begin{prop}\label{alphav2}
Let $\gamma=\alpha$ or $\gamma =\pi$. The unit $t_0(\gamma)$ satisfies:
\begin{align*}
t_0(\gamma) &\equiv 1+2\omega + u_1^3  \mod (4, 2u_1^2,u_1^{9}) \\
t_1(\gamma) &\equiv \omega^2u_1 \mod (2, u_1^3).
\end{align*}
Therefore, for $v_2 = u^{-3}$
\begin{align*}
\phi_{\gamma}(v_2) &= v_2+v_1^3 \mod (2, u_1^9).
\end{align*}
Further
\[\phi_{\gamma} \equiv \phi_{\gamma^{-1}} \mod (2,u_1^9).\]
\end{prop}
\begin{proof}
Since $\pi \equiv \alpha \mod T^4$, it follows from $a_i(\alpha)=a_i(\pi)$ for $i=1$ and $i=2$. Therefore, modulo $(2, u_1^9)$, they have the same action.
The claim for $t_0(\gamma)$ follows immediately using the fact that for $\gamma$ either $\alpha$ or $\pi$, the coefficient $a_2(\gamma)=\omega$ and $a_3(\gamma)=0$. 
Modulo $(2)$,
\begin{align*}
\phi_{\gamma}(v_2) &\equiv t_0(\gamma)^{-3}v_2 \\
&\equiv t_0(\gamma)t_0(\gamma)^{-4}v_2 \\
&\equiv t_0(\gamma)v_2 \mod (u_1^{12}),
\end{align*}
which proves the identity
for $\phi_{\gamma}(v_2)$. 

Note that $\gamma^2 \in F_{4/2}\Sn_{\cC}$. It then follows from the formula for $t_0(\gamma^2)$ that  
$t_0({\gamma^2} ) \equiv 1$ modulo $(2, u_1^9)$. Hence, $\phi_{\gamma^2} = \phi_{\gamma} \circ \phi_{\gamma}\equiv id$ modulo $(2, u_1^9)$.
\end{proof}

\section{The algebraic duality resolution spectral sequence}\label{sec:E1}

\subsection{Preliminaries}\label{subsec:ss}

The groups $\Sn_{2}^1$ and $\Sn_{\cC}^1$ are isomorphic. Further, the isomorphism we constructed restricts to the identity on $\W$, so it preserves $\alpha$ and $\pi$ (see \fullref{thm:isoS2SC}). Therefore, \fullref{thm:resnormS2} holds if we replace $\Sn_{2}^1$ by $\Sn_{\cC}^1$.
\begin{rem}
This is a good place to justify the slight differences between \fullref{thm:resnormS2} and the results of \cite{Paper1}. The existence of the resolution is \cite[Theorem 1.2.1]{Paper1}, but for the isomorphic group $\Sn_{2}^1$. However, the descriptions of the maps is different from \cite[Theorem 1.2.6]{Paper1} and this requires an explanation. The map $\partial_1$ is unchanged. However, in the notation of \cite[Theorem 1.2.6]{Paper1}, we replace $\partial_2$ with $g_2^{-1} \circ \partial_2$ and $\partial_3$ with $\partial_3'$. The resulting chain complex of $\Z_2[\![\Sn_{2}^1]\!]$--modules is isomorphic to that of \cite[Theorem 1.2.1]{Paper1}.
As $g_2^{-1}$ is an isomorphism, the map $\F_2\ot_{\Z_2[\![S_{2}^1]\!]} g_2^{-1} : \F_2 \rightarrow \F_2$ is non-zero, so that
\begin{align}\label{fpep}
g_2^{-1}(e_2) = e_2 \mod (2, IS_{2}^1).
\end{align}
Since $e+\alpha \in (2, IS_{2}^1)$, the description of the middle map follows from the fact that $\Theta \equiv e+\alpha$ modulo $(2, (IS_{2}^1)^2)$. The last map clearly satisfies (c).
\end{rem}

In our computation, we will need to use some additional structure in the algebraic duality resolution spectral sequence (ADSS). We record that here.
For any complete $\Z_2[\![\Sn_{\cC}^1]\!]$--modules $A$ and $B$, let 
\[\Ext(A,B) := \Ext_{\Z_2[\![\Sn_{\cC}^1]\!]}(A,B).\] 
If $B$ is an $\Z_2[\![\Sn_{\cC}^1]\!]$--module which is free over the $2$-adics $\Z_2$, then the Bockstein 
\[\beta : \Ext^*(A, B/2) \rightarrow \Ext^{*+1}(A, B/2)\] 
is the connecting homomorphism associated to the exact sequence 
\begin{align*} 
0 \rightarrow B/2 \rightarrow B/4 \rightarrow B/2 \rightarrow 0.
\end{align*}
The algebraic duality resolution
\[0 \rightarrow \sC_3 \rightarrow \sC_2 \rightarrow \sC_1 \rightarrow \sC_0 \rightarrow \Z_2 \rightarrow 0\]
is obtained from splicing exact sequences
\begin{align} \label{N}
0 \rightarrow N_p \rightarrow \sC_p \rightarrow N_{p-1} \rightarrow 0\end{align}
with $\sC_3=N_2$ and $N_{-1} = \Z_2$ (see \cite{Paper1}). For $B$ a profinite $\Z_2[\![\Sn_{\cC}^1]\!]$--module, the spectral sequence 
\[ E_r^{p,q}=  \Ext_{\Z_2[\![\Sn_{\cC}^1]\!]}^q(\sC_p,B) \cong H^q(F_p, B)  \Longrightarrow H^{p+q}(\Sn_{\cC}^1, B)\]
associated to the exact couple
\begin{align}\label{exact-coup}
\xymatrix{ \Ext(N_*, B) \ar@{.}[rr]^-{\delta_*} & &  \Ext(N_{*-1}, B) \ar[dl]^-{r_*}  \\ &  \Ext(\sC_*, B) \ar[ul]^-{i^*} &   }\end{align}
is the ADSS with coefficients in $B$.
In (\ref{exact-coup}), the dotted arrows are the connecting homomorphisms for the exact sequences (\ref{N}).

\begin{lem}\label{lem-bock}
For $x \in E_r^{p,q}$, $\beta(x) \in E_r^{p,q+1}$ and $d_r(\beta(x)) = \beta(d_r(x))$.
\end{lem}
\begin{proof}
The maps $r_*$, $i_*$ and $\delta_*$ in the exact couple (\ref{exact-coup}) commute with $\beta$. A diagram chase shows that $d_r(\beta(x)) = \beta(d_r(x))$.
\end{proof}

\begin{lem}\label{S21-lin}
Let $R$ be an $\Z_2[\![\Sn_{\cC}^1]\!]$--module which is also a ring. Suppose that the action of $\Sn_{\cC}^1$ is given by ring homomorphisms. The ADSS with coefficients $R$ is a module over the cohomology $H^*(\Sn_{\cC}^1;R)$.
\end{lem}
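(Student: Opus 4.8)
The plan is to build the $H_c^*(\Sn_{\cC}^1; R)$-module structure out of the cup product, exploiting the fact that every map assembling the spectral sequence (\ref{DRSS-ag}) is induced either by an $\Sn_{\cC}^1$-module homomorphism or by a connecting homomorphism, both of which are compatible with cup products. Write $G = \Sn_{\cC}^1$ and $\Ext(-,-) = \Ext_{\Z_2[[G]]}(-,-)$ throughout.

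First I would recall that $H_c^*(G; R) = \Ext^*(\Z_2, R)$ carries a graded-ring structure: the multiplication $R \,\widehat{\ot}\, R \to R$ is $G$-equivariant precisely because $G$ acts by ring homomorphisms, and combining it with the external cup product and the restriction along the diagonal of $G$ gives the product on $H_c^*(G;R)$. The same ingredients produce, for every complete $G$-module $A$, a graded pairing
\[ H_c^p(G; R) \ot \Ext^q(A, R) \ra \Ext^{p+q}(A, R), \]
obtained from the external product $\Ext^p(\Z_2, R) \ot \Ext^q(A, R) \ra \Ext^{p+q}(\Z_2 \,\widehat{\ot}\, A,\, R\,\widehat{\ot}\, R)$ followed by the diagonal isomorphism $A \cong \Z_2 \,\widehat{\ot}\, A$ and the multiplication on $R$. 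This pairing makes $\Ext^*(A, R)$ a graded module over $H_c^*(G; R)$, and it is natural (contravariantly) in $A$. Applying this with $A = \sC_p$ shows that each column $E_1^{p,*} = \Ext^*(\sC_p, R)$ of the spectral sequence is a graded $H_c^*(G; R)$-module.

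Next I would check that the exact couple (\ref{exact-coup}), now formed with coefficients $R$ in place of $B/2$, is an exact couple of $H_c^*(G; R)$-modules. It is assembled from the short exact sequences (\ref{N}), namely $0 \ra N_p \ra \sC_p \ra N_{p-1} \ra 0$. The maps $i^*$ and $r_*$ are induced by the $G$-module homomorphisms $N_p \hookrightarrow \sC_p$ and $\sC_p \twoheadrightarrow N_{p-1}$, so they are $H_c^*(G;R)$-linear by the naturality of the pairing above. The connecting homomorphisms $\delta_*$ are $H_c^*(G; R)$-linear by the standard compatibility of the cup product with the long exact $\Ext(-, R)$-sequence attached to a short exact sequence of modules. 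Since each map in the couple respects the module structure, the derived couples do as well, and hence every page $E_r$, together with every differential $d_r$ (which is a composite of the structure maps of the derived couple), is a module and a module map over $H_c^*(G; R)$. This is exactly the assertion of the lemma.

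The one point requiring care is the $H_c^*(G;R)$-linearity of the connecting homomorphisms $\delta_*$. This is the usual compatibility between cup products and the coboundary of a long exact sequence, but in our setting it must be verified in the category of complete $\Z_2[[G]]$-modules, where one needs the external cup product and the diagonal approximation to be defined for the completed tensor product and to interact correctly with the degree-raising maps. These verifications are formal once one models $H_c^*(G; R)$ and $\Ext^*(-, R)$ by $\Hom_{\Z_2[[G]]}$ of a fixed projective resolution of $\Z_2$ equipped with a diagonal approximation; no input beyond the ring structure on $R$ and the $G$-equivariance of its multiplication is needed.
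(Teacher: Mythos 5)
Your argument is correct and follows essentially the same route as the paper: establish that each $\Ext^*(\sC_p,R)$ is a module over $\Ext^*(\Z_2,R)=H_c^*(\Sn_{\cC}^1;R)$ and then check that the structure maps $i^*$, $r_*$, $\delta_*$ of the exact couple are module maps, so the differentials on every page are linear. You simply spell out the cup-product construction and the compatibility of $\delta_*$ with it in more detail than the paper does.
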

\begin{proof}
Note that $\Ext(A, R)$ is a module over $\Ext(\Z_2, R)$ for any $\Z_2[\![\Sn_{\cC}^1]\!]$--module $A$. Further, the maps in the algebraic duality resolution are maps of $\Z_2$--modules. Therefore, the maps $r_*$, $i_*$ and $\delta_*$ in (\ref{exact-coup}) are morphisms of $\Ext(\Z_2, R)$--modules, hence so are the differentials in the ADSS.
\end{proof}

Recall that
\[(E_{\cC})_*V(0) \cong (E_{\cC})_*/(2) \cong \F_4[\![u_1]\!][u^{\pm 1}].\] 
The spectrum $E_{\cC}$ was chosen so that $F_{E_{\cC}} = uF_{\cC_U}(u^{-1}x, u^{-1}y)$, where $\cC_U$ is the curve
\begin{align*}
\cC_U : y^2+3u_1xy+(u_1^3-1)y = x^3
\end{align*}
(see \fullref{thm:CU}).
It follows from Silverman \cite[Section IV.1]{silverman} that
\[\![2]_{F_{E_{\cC}}}(x) \equiv u^{-1}u_1x^2+u^{-3}(u_1^3+1)x^4 + \ldots \mod (2).\]
We adopt the notation
\begin{align}\label{eqn:v1v2}
v_1&=u^{-1}u_1, & v_2&=u^{-3}.
\end{align}
\begin{warn}
The reader should note that the formal group law $F_{E_{\cC}}$ is not $2$--typical. The image of the Araki generators ``$v_1$'' and ``$v_2$'' under the map $\varphi: BP_* \to (E_{\cC})_*$ which classifies the $2$--typification of $F_{E_{\cC}}$ does not correspond to our choice of notation (\ref{eqn:v1v2}). 
\end{warn}

The element $v_1$ is invariant under the action of $\Sn_{\cC}$ on $(E_{\cC})_*V(0)$ so it is an element of $H^0(\Sn_{\cC}^1, (E_{\cC})_*V(0))$.  However, it does not lift to an invariant in $(E_{\cC})_*$.  Therefore, the image of $v_1$ in $H^1(\Sn_{\cC}^1, (E_{\cC})_*)$ under the connecting homomorphism $\delta$ for the exact sequence 
\begin{align}\label{eqn:connect}
0 \to (E_{\cC})_* \xra{2} (E_{\cC})_* \ra (E_{\cC})_*V(0) \to 0\end{align}
is non-zero. Let $\eta = \delta(v_1)$ in $H^1(\Sn_{\cC}^1, (E_{\cC})_*)$. We also call the image of $\eta$ in $H^1(\Sn_{\cC}^1, (E_{\cC})_*V(0))$ by the same name and note that $\eta$ is the image of
$v_1$ under the Bockstein $\beta : H^0(\Sn_{\cC}^1, (E_{\cC})_*V(0)) \to H^1(\Sn_{\cC}^1, (E_{\cC})_*V(0))$, i.e., $\eta = \beta(v_1)$.
\begin{lem}\label{v1linear}
The ADSS is a spectral sequence of modules over $\F_4[v_1, \eta]$. 
\end{lem}
\begin{proof}
The ADSS is a module over $H^*(\Sn_{\cC}^1,(E_{\cC})_*V(0))$ (see \fullref{S21-lin}), which is a module over $\F_4[v_1, \eta]$.
\end{proof}

\subsection{The $E_1$-term}\label{subsec:E1}
The input for the ADSS is the group cohomology of $G_{24}$ and $C_6$ with coefficients in $(E_{\cC})_*V(0)$. These cohomology groups are described in this section.

The computation of $H^*(G_{24},(E_{\cC})_*V(0))$ is related to that of $H^*(A,\Gamma)$ for $(A,\Gamma)$ the Hopf algebroid classifying Weierstrass curves and their strict isomorphisms. This result is originally due to Hopkins and Mahowald and can be found in Bauer in \cite[Section 7]{tbauer}. A self--contained presentation of the computation of $H^*(G_{24},(E_{\cC})_*V(0))$ is included in \ref{sec:appcoh}.

Some invariants of the curve $\cC_U$ play a central role. The following classes of $(E_{\cC})_*$ are invariant under the action of $\Aut(\cC)$. The reader may refer to either \ref{sec:appcoh} or to Silverman \cite[Section III.1]{silverman}).
\begin{align*}
\Delta &=  27v_2^3 (v_1^3-v_2)^3   & c_4  &=9(v_1^4+8v_1v_2) \\
c_6 &=  -27(8v_2^2+20 v_1^3v_2-v_1^6)  & j &= \frac{c_4^3}{\Delta}.
\end{align*}

\begin{warn}\label{warn:c4}
The reader must be careful not to confuse the $j$--invariant above and the element of $G_{24}$. Similarly, $c_4$ and $c_6$ differ from the elements of \fullref{thm:E2}. Our meaning should be clear from the context.

We abuse notation and call the corresponding invariants in $(E_{\cC})_*V(0)$ by the same name, so that $\Delta \equiv v_2^3 (v_1^3+v_2)^3 $, $j\equiv v_1^{12}\Delta^{-1}$, $c_4 \equiv v_1^4$ and $c_6 \equiv v_1^6$ in $(E_{\cC})_*V(0)$.
\end{warn}

The main result of \ref{sec:appcoh} is the following theorem.
\begin{thm}\label{TBG24}
There is an ismorphism
\[ H^*(G_{24}, (E_{\cC})_*V(0))\cong \F_4[\![j]\!][v_1,\Delta^{\pm 1}, k, \eta, \nu, x, y]/(\sim)\]
where $(\sim)$ is the ideal generated by the relations
\begin{align*}
v_1^{12}&=j\Delta & v_1 \nu&=0, & v_1^2x&=0, & v_1y&=0 , \\ 
\eta \nu&=0, & \nu x&=v_1\eta x, & \eta y&=v_1x^2, & xy&=0 ,    \\
   \eta^2x& =\nu^3,  & x^3&=\nu^2 y,   & y^2&=\nu^2\Delta, & \eta^4&=v_1^4 k 
\end{align*}
and degrees $(s,t)$, for $s$ the cohomological grading and $t$ the internal grading,
\begin{align*}
 |j|&=(0,0), & |v_1|&=(0,2), & |\Delta|&=(0, 24), &  |\eta|  &= (1,2), \\
  |\nu| &=(1,4) & |x|&=(1,8), & |y|&=(1,16) & |k|&=(4,0).
\end{align*}
\end{thm}

\begin{lem}\label{HC6}
The cohomology of $C_6$ with coefficients in $(E_{\cC})_*V(0)$ is given by
\begin{align*}
H^*(C_{6} ; (E_{\cC})_*V(0)) &= \mathbb{F}_4[\![u_1^3]\!][v_1,v_2^{\pm 1},h]/(v_1^3 = v_2u_1^3),
\end{align*}
where $|h| =(1,0)$, $|v_2| =(0,6)$, $|v_1| =(0,2)$ and $|u_1^3| =(0,0)$. Further, the action of $\eta$ is determined by
\begin{align*}
\eta \cdot 1 = v_1 h.
\end{align*}
\end{lem}
\begin{proof}
Recall that $C_2 = \{\pm 1\}$ denotes the center of $G_{24}$ and that $C_6 = C_{2}\times C_3$. Because $C_2$ acts trivially on $(E_{\cC})_*V(0)$ and $C_3$ has order coprime to $2$,
\begin{align*}
H^*(C_6, (E_{\cC})_*V(0)) &\cong H^*(C_2 ; (E_{\cC})_*V(0))^{C_3} \\
&=((E_{\cC})_*V(0))^{C_3}[h] \\
&= \mathbb{F}_4[\![u_1^3]\!][v_1,v_2^{\pm 1},h]/(v_1^3 = v_2u_1^3),
\end{align*}
where $h$ is in $(s,t)$ degree $(1,0)$. To prove that $\eta = v_1 h$, note that the action of $C_2$ on $(E_{\cC})_* = \W[\![u_1]\!][u^{\pm 1}]$ is given by
$\phi_{-1}(u) = -u$ and $\phi_{-1}(u_1) = u_1$. One computes that $\delta(v_1)= v_1h$ for $\delta$ the connecting homomorphism associated to (\ref{eqn:connect}). The claim follows from the fact that $\delta(v_1) = \eta$ (see \fullref{subsec:ss}).
\end{proof}

\begin{lem}\label{HG24'}
Let $\pi=1+2\omega$ in $\Sn_{\cC}$. Let $G_{24}' = \pi G_{24} \pi^{-1}$. Let $\phi_{\pi}: (E_{\cC})_* \rightarrow (E_{\cC})_*$ give the action of $\pi$ on $(E_{\cC})_*$. Then $\phi_{\pi}$ induces an $\F_4[v_1, \eta]$--linear isomorphism 
\[ H^*(G_{24}, (E_{\cC})_*V(0)) \cong H^*(G'_{24}, (E_{\cC})_*V(0)). \]
\end{lem}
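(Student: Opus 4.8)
The plan is to exploit the conjugation isomorphism directly. Since $G_{24}' = \pi G_{24} \pi^{-1}$, conjugation by $\pi$ gives a group isomorphism $c_\pi : G_{24} \to G_{24}'$, $\gamma \mapsto \pi \gamma \pi^{-1}$. The element $\pi$ acts on $(E_{\cC})_*$ via the ring automorphism $\phi_\pi$, and this action descends to $(E_{\cC})_*V(0)$ because $\phi_\pi$ respects the short exact sequence defining $V(0)$. The pair $(c_\pi, \phi_\pi)$ is then a compatible isomorphism of the relevant $G$-module data: for $\gamma \in G_{24}$ and $m \in (E_{\cC})_*V(0)$ one has $\phi_\pi(\gamma \cdot m) = (\pi\gamma\pi^{-1})\cdot \phi_\pi(m) = c_\pi(\gamma)\cdot\phi_\pi(m)$, which is precisely the condition needed for $(c_\pi,\phi_\pi)$ to induce an isomorphism on group cohomology. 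So the first step is to record this compatibility and conclude that it yields an isomorphism $H^*(G_{24};(E_{\cC})_*V(0)) \cong H^*(G_{24}';(E_{\cC})_*V(0))$.

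The second step is to verify $\F_4[v_1,h_1]$-linearity. This reduces to checking that $\phi_\pi$ fixes $v_1$ and $h_1$. By \myref{actmod2}, the class $v_1$ is invariant under the action of all of $\Sn_{\cC}$ modulo $(2)$, hence $\phi_\pi(v_1) = v_1$ in $(E_{\cC})_*V(0)$. Since $\phi_\pi$ commutes with the Bockstein $\beta$ by naturality (the short exact sequence defining $V(0)$ is a sequence of $\Sn_{\cC}$-modules and $\pi$ acts on all three terms), and $h_1 = \beta(v_1)$, we get $\phi_\pi(h_1) = \beta(\phi_\pi(v_1)) = \beta(v_1) = h_1$. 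Because the cohomology isomorphism is induced by $\phi_\pi$ on coefficients, multiplication by $v_1$ and by $h_1$ is preserved, giving the desired $\F_4[v_1,h_1]$-linearity.

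The main subtlety, and the step I would be most careful about, is that $\pi$ is a specific element of $\Sn_{\cC}$ which lies in $\Sn_{\cC}^1$ only after passing to the norm-one subgroup. Here the cohomology groups involved are cohomology of the \emph{finite} groups $G_{24}$ and $G_{24}'$, both of which sit inside $\Sn_{\cC}^1$. The conjugation map is an isomorphism of abstract finite groups, and the standard functoriality of group cohomology under a pair (group isomorphism, compatible module map) applies without needing any profinite subtlety; conjugation is an inner automorphism of the \emph{ambient} group $\Sn_{\cC}$, but it is an outer identification of the two distinct finite subgroups, which is exactly why the isomorphism is nontrivial and useful. I would state this carefully to avoid the false impression that $H^*(G_{24})$ and $H^*(G_{24}')$ are identified by an inner automorphism of a single group. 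The proof is otherwise a formal consequence of the functoriality of $H^*(-;-)$ in both variables.

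\begin{proof}
Conjugation by $\pi$ defines a group isomorphism $c_\pi : G_{24} \to G_{24}'$, $\gamma \mapsto \pi\gamma\pi^{-1}$, and $\pi$ acts on $(E_{\cC})_*$ by the ring automorphism $\phi_\pi$, which descends to $(E_{\cC})_*V(0)$. For $\gamma \in G_{24}$ and $m \in (E_{\cC})_*V(0)$,
\[
\phi_\pi(\gamma\cdot m) = (\pi\gamma\pi^{-1})\cdot\phi_\pi(m) = c_\pi(\gamma)\cdot\phi_\pi(m),
\]
so $(c_\pi,\phi_\pi)$ is a compatible pair and induces an isomorphism
\[
H^*(G_{24},(E_{\cC})_*V(0)) \xrightarrow{\ \cong\ } H^*(G_{24}',(E_{\cC})_*V(0))
\]
by functoriality of group cohomology in both variables. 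For linearity, \myref{actmod2} gives $\phi_\pi(v_1)=v_1$ in $(E_{\cC})_*V(0)$. Since $\pi$ acts compatibly on the exact sequence defining the Bockstein, $\phi_\pi$ commutes with $\beta$, whence
\[
\phi_\pi(h_1) = \phi_\pi(\beta(v_1)) = \beta(\phi_\pi(v_1)) = \beta(v_1) = h_1.
\]
As the cohomology isomorphism is induced by $\phi_\pi$ on coefficients, it preserves multiplication by $v_1$ and $h_1$, and is therefore $\F_4[v_1,h_1]$-linear.
\end{proof}
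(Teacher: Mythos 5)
Your proof is correct and follows essentially the same route as the paper: the paper packages the compatible pair $(c_\pi,\phi_\pi)$ as a natural transformation $F_\pi : (-)^{G_{24}} \to (-)^{G_{24}'}$ of fixed-point functors and passes to derived functors, which is the same conjugation-functoriality argument you give, and the linearity argument (invariance of $v_1$ under $\phi_\pi$ plus $h_1=\beta(v_1)$ and naturality of the Bockstein) is identical.
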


\newpage
\hvFloat[%
floatPos=H,%
capVPos=c,%
capWidth=w,
rotAngle=90,
objectPos=c]{figure}{\includegraphics[height=0.72\textwidth]{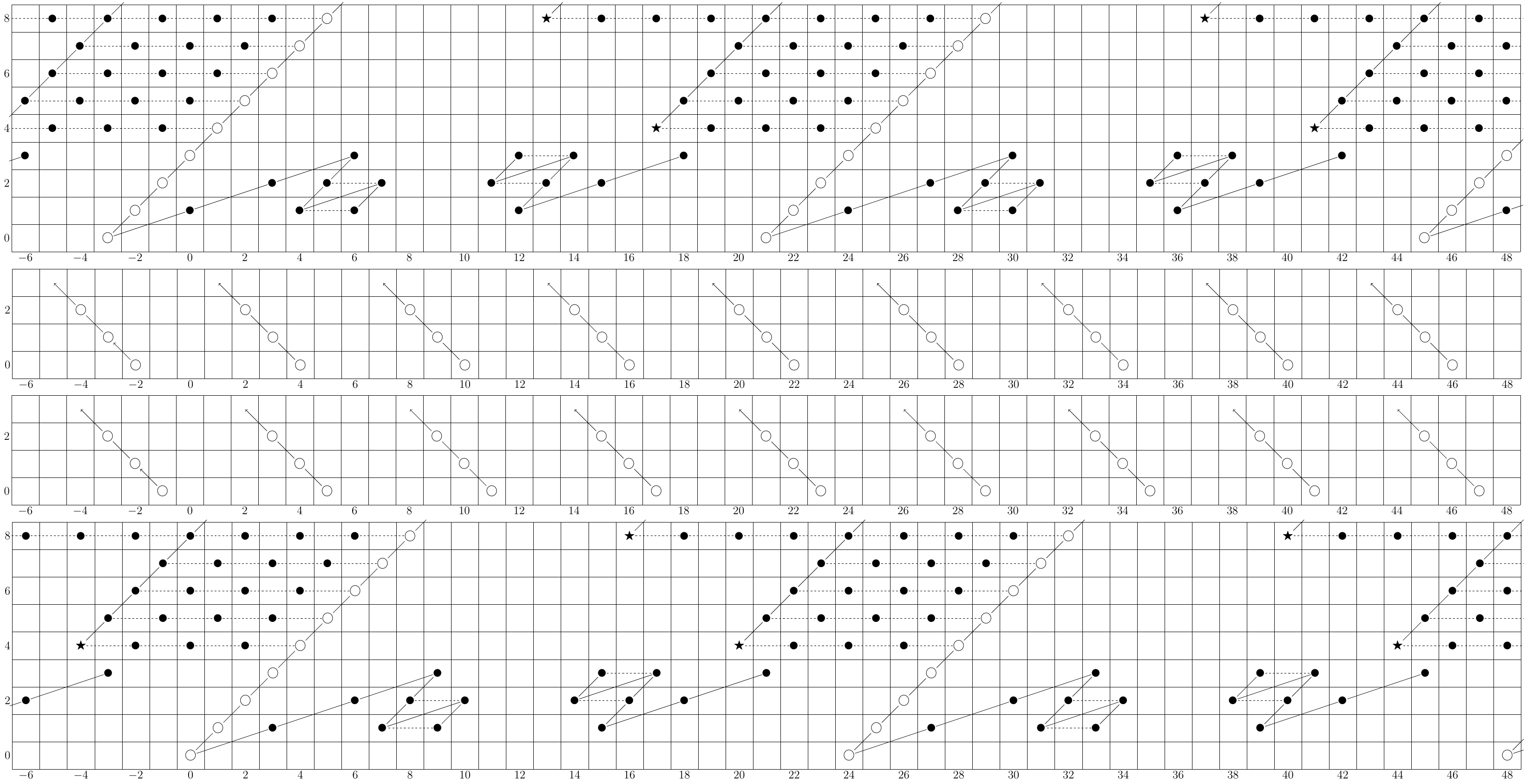}}%
{The $E_1$-term for the ADSS with coefficients $(E_{\cC})_*V(0)$. The rows represent $E_{1}^{p, *}$, the top row corresponding to $p=3$. The grading is given by $(t-q-p,q)$, where $t$ is the internal grading, so that $d_r : E_{r}^{p, q} \rightarrow E_{r}^{p+r, q-r+1}$ decreases the horizontal grading by $1$. A $\bullet$ denotes a copy of $\F_4$. Dashed horizontal lines denote multiplication by $v_1$, and a $\bigcirc$ denotes a copy of $\F_4[v_1]$. A $\bigstar$ is a copy of \fullref{fig:star}.}{fig:SSRS-E1}

\begin{figure}[H]
\includegraphics[width=\textwidth]{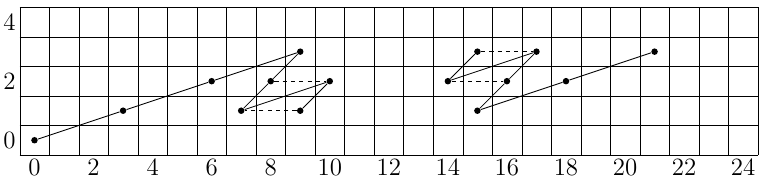}
\caption{The pattern $\bigstar$ in \fullref{fig:SSRS-E1}.}
\label{fig:star}
\end{figure}

\begin{proof}
Conjugation by any element of $\Sn_{\cC}$ induces an isomorphism on cohomology.
The linearity follows from the fact that $v_1$ is invariant under the action of $\pi$, and $\eta = \beta(v_1)$. 
\end{proof}
\begin{rem}\label{rem:deltaprime}
To avoid ambiguities, define $\Delta' := \phi_{\pi}(\Delta)$ and $j':= \phi_{\pi}(j)$. For an element $z$ of positive dimension in the cohomology of $G_{24}$, we will abuse notation and denote $\phi_{\pi}(z)$ by $z$ since there will be little room for confusion. Further, we let $k$ act on $H^*(G'_{24}, (E_{\cC})_*V(0))$ via $\phi_{\pi}(k)$ and treat the isomorphism of \fullref{HG24'} as one of $\F_4[v_1,\eta,k]$--modules.
\end{rem}

\subsection{Approximate $\Delta$--linearity}\label{subsec:Dlin}
Here, we make the key observation for the computations of \fullref{sec:compe2}. Namely, that the action of $(IS_{\cC}^1)^2$ on $(E_{\cC})_*V(0)$ is \emph{approximately $\Delta$--linear}. 
\begin{thm}\label{LIG-2}
Let $x$ be in $(E_{\cC})_*V(0)$. Let $g$ and $h$ be elements of $S_{\cC}$. Then,
\begin{enumerate}[(a)]
\item $\phi_h(\Delta)\equiv \Delta$ modulo $(2, u_1^6)$,
\item $(id-\phi_g)(id-\phi_h)(\Delta) \equiv 0$ modulo $(2, u_1^8)$,
\item $(id-\phi_g)(id-\phi_h)\left(x\Delta^{2^{k}(1+2t)}\right) \equiv (id-\phi_g)(id-\phi_h)(x) \Delta^{2^{k}(1+2t)}$ modulo $(2, u_1^{1+3\cdot 2^{k+1}})$.
\end{enumerate}
\end{thm}
\begin{proof}
For (a), note that by (\ref{eqn:t0gamma12}),
\[t_0(h) \equiv 1+a_1(h)^2u_1+a_1(h)u_1^2 \mod (2,u_1^3).\]
where $a_i(h)^4=a_i(h)$.
It follows that $t_0(h)^{16} \equiv 1 \mod (2, u_1^{16})$. Since $\Delta = u^{-12}(1+u_1^3)^3$, modulo $(2,u_1^6)$ we compute
\begin{align*}
(id-\phi_h)(\Delta) &\equiv u^{-12}(1-t_0(h)^{-12}) + u_1^3u^{-12}(1-t_0(h)^{-9}) \\
&\equiv u^{-12}(1-t_0(h)^{4}) + u_1^3u^{-12}(1-t_0(h)^{7}) \\
&\equiv u^{-12}(a_1(h)^8u_1^{4}) + u_1^3u^{-12}(a_1(h)^2u_1+a_1(h)u_1^2+a_1(h)^4u_1^2) \\
&\equiv 0 \mod (2, u_1^6).
\end{align*}
To prove (b) Since $id-\phi_g$ applied to the ideal $(u_1^6)$ is contained in the ideal $(u_1^8)$, the claim follows from (a).
Finally, it follows from (a) that for $h \in S_{\cC}$, there exists $y_h$ such that
$\phi_h(\Delta^{2^k(1+2t)}) =\Delta^{2^k(1+2t)}+v_1^{6\cdot 2^k}y_h$. 
Hence,
\[ (id-\phi_h)(\Delta^{2^k(1+2t)}) =\Delta^{2^k(1+2t)}(id-\phi_h)(x)+v_1^{6\cdot 2^k}y_h \phi_h(x). \]
Therefore,
\begin{align*}
(id-\phi_g)(id-\phi_h)(\Delta^{2^k(1+2t)}) &= (id-\phi_g)(\Delta^{2^k(1+2t)}(id-\phi_h)(x)+v_1^{6\cdot 2^k}y_h \phi_h(x)) \\
 &= \Delta^{2^k(1+2t)}(id-\phi_g)(id-\phi_h)(x)  \\
 & +  v_1^{6\cdot 2^k}y_g \phi_g((id-\phi_h)(x)) +v_1^{6\cdot 2^k}(id-\phi_g)(y_h \phi_h(x))).
\end{align*}
Since $h$ and $g$ are in $S_{\cC}$, 
\[(id-\phi_h)(x)\equiv (id-\phi_g)(y_h \phi_h(x))) \equiv 0 \mod (2,u_1).\]
This proves (c).
\end{proof}

\section{Computation of the $E_{\infty}$-Term}\label{sec:compe2}
Now we turn to the computation of the ADSS
 \begin{equation*}
 E_1^{p,q}  = \Ext^q_{\Z_2[\![\Sn_{\cC}^1]\!]}(\sC_p, (E_{\cC})_*V(0)) \Longrightarrow H^{p+q}(\Sn_{\cC}^1,(E_{\cC})_*V(0)),\end{equation*}
 with
$ E_1^{p,q} \cong H^q(F_p,  (E_{\cC})_*V(0))$, 
whose construction was described in \fullref{sec:E1}. Recall also that the spectral sequence comes from a resolution
\begin{align*}
0 \rightarrow \sC_3  \xra{\partial_3} \sC_2  \xra{\partial_2}\sC_1  \xra{\partial_1}\sC_0 \xra{\varepsilon} \Z_2 \rightarrow 0, 
\end{align*}
Further, $d_1: E_1^{p,q}  \rightarrow E_1^{p+1,q}$
is induced by $\Ext_{\Z_2[\![\Sn_{\cC}^1]\!]}^q(\partial_{p+1}, (E_{\cC})_*V(0))$
for $\partial_{p+1}$ as described in \fullref{thm:resnormS2}. We use these descriptions together with our partial knowledge of the action of $\Sn_{\cC}$ on $(E_{\cC})_*$ to compute the $d_1$--differentials. 

Recall that $E_1^{0,0} \cong (E_{\cC})_*^{G_{24}}$ and $E_1^{p,0} \cong (E_{\cC})_*^{C_6}$ for $p=1$ and $p=2$. Since there is an inclusion
\[ (E_{\cC})_*^{G_{24}} \ra (E_{\cC})_*^{C_6}, \]
there is an action of $ (E_{\cC})_*^{G_{24}}$ on $E_1^{p,0}$ for $0 \leq p \leq 2$. Therefore, it will make sense to talk about the image of $\Delta$ defined in \fullref{TBG24} in $E_1^{p,0}$. To avoid ambiguity, we will use the convention
\[\Delta^k[p] = \Delta^k \cdot 1 \in E_{1}^{p,0}\]
in the statement of the results. However, in the proofs, we will assume that the context is sufficient to determine which elements are meant. Similarly, for $v_2$ in $(E_{\cC})_*^{C_6}$, to distinguish between $E_1^{1,0}$ and $E_1^{2,0}$, we let
\[v_2^k[p] = v_2^{k} \cdot 1 \in E_{1}^{p,0}. \]

Although the differentials $d_1$ are not ring homomorphisms, they are induced by the action of elements in $\Z_2[\![\Sn_{\cC}]\!]$. Since $\Z_2[\![\Sn_{\cC}]\!]$ is generated by ring homomorphisms and $(E_{\cC})_*V(0)$ is an $\F_2$--vector, it follows that the differentials commute with the squaring operation. That is, for any $b$ in $E_1^{p,q}$,
\begin{align}\label{eqn:squaring}
d_1(b^2)=d_1(b)^2 \mod (2).\end{align}

\subsection{The differential $d_1 : E_1^{0,0} \ra E_1^{1,0}$}\label{subsec:d10}
The differential $d_1 : E_1^{0,0} \ra E_1^{1,0}$ is induced by the map
$\partial_1 : \sC_1 \ra \sC_0$, 
given by $\partial_1(\gamma e_1 ) = \gamma (e-\alpha)e_0$. Here, $e_i$ is the canonical generator of $\sC_i$. Therefore,
\[d_1 = id+\phi_{\alpha} :  E_1^{0,0} \rightarrow E_1^{1,0}.\]

Recall from {\color{red}\fullref{TBG24}} that the powers of 
$\Delta =  v_2(v_2+v_1^3)^3$
generate $H^0(G_{24}, (E_{\cC})_*V(0)) \cong (E_{\cC})_*^{G_{24}}$ as an $\F_4[v_1]$--module. So it is sufficient to compute $d_1$ on $\Delta^n[0]$ for $n\in \Z$. Therefore, we begin by recording a result on the action of $\alpha$ on the powers of $\Delta$.
\begin{prop}\label{prop:delta}
Let $n =2^k(2t+1)$, then
\begin{align*}
\phi_{\alpha}(\Delta^n) \equiv \Delta^{n} (1+v_1^{6\cdot 2^k}v_2^{-2^{k+1}}) \equiv \Delta^n+ v_1^{6\cdot 2^k}v_2^{2^{k+1}(4t+1)} \mod (2, u_1^{9\cdot 2^k}). \end{align*}
\end{prop}
\begin{proof}
By \fullref{alphav2}, $\phi_{\alpha}(v_2) \equiv v_2+v_1^3$ modulo $(2,u_1^9)$. Since $\phi_{\alpha}(v_1)\equiv v_1$ modulo $(2)$,
\begin{align*}
\phi_{\alpha}(\Delta) &\equiv \phi_{\alpha}(v_2)(\phi_{\alpha}(v_2)+v_1^3)^3 \\
&\equiv (v_2+v_1^3) v_2^3 \mod (2, u_1^9)\\
&\equiv \Delta (1+v_2^{-2}v_1^6) \mod (2, u_1^9).
\end{align*}
It suffices to prove the claim for $n=2t+1$ odd as the more general statement then follows from (\ref{eqn:squaring}). Using $\Delta^n \equiv v_2^{4n}$ modulo $(2,u_1)$, we have
\begin{align*}
\phi_{\alpha}(\Delta^n) &= (\Delta(1+v_2^{-2}v_1^6))^n\mod (2, u_1^9) \\
&\equiv  \Delta^n(1+n \cdot v_2^{-2}v_1^6) \mod (2, u_1^9) \\
&\equiv \Delta^n + v_2^{2(4t+1)}v_1^{6} \mod (2, u_1^{9}). \qedhere
\end{align*}
\end{proof}

\begin{cor}\label{partial0}
Let $n = 2^k(2t+1)$, then for $d_1 : E_1^{0,0} \to E_1^{1,0}$,
\begin{align*}
d_1(\Delta^n[0]) = v_1^{6\cdot 2^k} v_2^{2^{k+1}(4t+1)}[1] \mod (u_1^{9\cdot 2^k}).
\end{align*}
\end{cor}
\subsection{The differential $d_1 : E_1^{1,0} \ra E_1^{2,0}$}\label{subsec:d12}
The goal of this section is to prove the following result.
\begin{prop}\label{partial1}
Let $n=2^k (1+2t)$ where $t \in \Z$ and $k\geq 0$. There exist homogeneous elements $\db_n$, such that
$\db_n \equiv v_2^n[1]$ modulo $(u_1)$.
The elements $\db_n$ satisfy
\begin{align*} d_1( \Delta^n[0]) &= \left\{ \begin{array}{ll}
v_1^{6\cdot 2^{k}}\db_{2^{k+1}(1+4t)}  & \mbox{$ \ \ \ \ \ \ \ n = 2^k(1+2t)$}\\
    0 & \mbox{$\ \ \ \ \ \ \ n=0$},\end{array} \right.
    \end{align*}
    and
    \begin{align*}
d_1( \db_{n}) &= \left\{ \begin{array}{ll}
 v_1^{3\cdot 2^{k}}v_2^{2^{k+1}(1+2t) }[2] \mod (2, u_1^{3\cdot 2^{k}+3}) & \mbox{$n =2^k(3+4t)$}\\
     v_1^{3\cdot 2^{k+1}}v_2^{m-2^{k+1 }}[2]  \mod (2, u_1^{3\cdot 2^{k+1}+3})  & \mbox{$n = 1+2^{k+2}+2^{k+3}t$}\\
     0  & \mbox{$n = 0, 1$ and $2^{k+1}(1+4t)$}.\end{array} \right. \end{align*}
\end{prop}
\noindent
We will break up the proof into a series of propositions.

The differential $d_1: E_1^{1,0} \rightarrow E_1^{2,0}$ is induced by the map
$\partial_2 : \sC_2 \ra \sC_1$. 
Recall from \fullref{thm:resnormS2} that 
\[\partial_2(\gamma e_2) = \gamma(e+\alpha + \sE) e_1\] 
where $\sE \in (2, (I\Sn_{\cC}^1)^2)$. Therefore, modulo $(2)$, $\sE = \sum a_{g,h} (e-g)(e-h)$ is in $(IS_{\cC}^1)^2$. It is to be thought of as the \emph{error}. Let 
\begin{align}\label{BB}
\phi_{\sE} = \sum a_{g,h} (id-\phi_g)(id-\phi_h).\end{align}

We first construct the $d_1$--cycle $\db_1$. The idea for its construction comes from Mahowald and Rezk \cite[Corollary 6.2]{MR2508904}. We need the following result.
\begin{lem}\label{lem:c4zero}
Let $c_4$ in $(E_{\cC})_8^{G_{24}}$ be given by 
\[c_4 = 9(v_1^4+8v_1v_2)=9u^{-4}u_1(u_1^3+8)\]
as in \fullref{subsec:E1}. For any $\gamma$ in $\mathbb{G}_{\cC}$, 
\[\phi_{\gamma}(c_4) \equiv c_4  \mod (16).\]
Further, if an element $\gamma$ in $\mathbb{S}_{\cC}$ has the form $\gamma \equiv 1+a_2(\gamma)T^2$ modulo $T^3$ for $a_2(\gamma)$ as in \fullref{subsec:act}, then 
\[c_4 - \phi_{\gamma}(c_4) \equiv 16(a_2(\gamma)+a_2(\gamma)^2)u_1u^{-4}  \mod (32, 16 u_1^2).\]
\end{lem}
\begin{proof}
The first step is to show that
$\phi_{\gamma}(c_4)  \equiv c_4$ modulo $16$. Since the Galois group acts trivially on $c_4$, it suffices to prove the claim for $\gamma$ in $\mathbb{S}_{\cC}$.
Let $t_0 = t_0(\gamma)$ and $t_1 = t_1(\gamma)$ as defined in (\ref{eqn:actint}).
A direct computation using (\ref{eqn:actint})
implies that
\begin{align*}
c_4 - \phi_{\gamma}(c_4)  & \equiv 8u^{-4} \left(u_1+\frac{3 u_1}{t_0^3}+\frac{t_1^2 u_1^2}{t_0^4}+\frac{t_1 u_1^3}{t_0^2}+\frac{2 t_1}{t_0^5}+\frac{2 t_1^4}{t_0^8} \right) \mod (32) \\
&\equiv  8u^{-4} u_1t_0^{-4} \left( t_0^4 +t_0 + u_1t_1^2+ u_1^2t_1t_0^2  \right) \mod (16)
\end{align*}
It follows from \fullref{prop:t0t1eqn} of \fullref{sec:appC} that
\[t_0 \equiv t_0^4  + u_1t_1^2+ u_1^2t_1t_0^2   \mod (2).\]
This proves that $c_4 - \phi_{\gamma}(c_4) \equiv 0$ modulo $(16)$.

Let $a_i = a_i(\gamma)$ as in \fullref{subsec:act}. By \fullref{t0t1again} and \fullref{t0gammafinally} applied to $\gamma$, we have
\begin{align*}
t_0 &\equiv 1+2a_2+2 a_3^2 u_1 \mod (4, u_1^2), & t_1 &\equiv a_2^2 u_1 \mod (2, u_1^2).
\end{align*}
Therefore, $t_0^4 \equiv 1$ modulo $(4,u_1^2)$ and $t_1^4 \equiv 0$ modulo $(4,u_1^2)$ so that
\begin{align*}
c_4 - \phi_{\gamma}(c_4) &\equiv   8u^{-4} \left(u_1+3 u_1 t_0+2 t_1t_0^3 \right) \mod (32,u_1^2)  \\
&\equiv 8u^{-4}  \left(  u_1 + 3u_1( 1+2a_2+2 a_3^2 u_1) +2a_2^2u_1 \right) \mod (32,u_1^2)  \\
&\equiv 16(a_2+a_2^2)u^{-4}u_1 \mod (32,u_1^2) 
\end{align*}
This implies that $c_4 - \phi_{\gamma}(c_4) =16((a_2+a_2^2)u^{-4}u_1+ \ldots)$.
\end{proof}

Consider the spectral sequence
 \begin{equation}\label{DRSS-ag-S}
\widetilde{E}_1^{p,q} = \Ext^q_{\Z_2[\![\Sn_{\cC}^1]\!]}(\sC_p, (E_{\cC})_*) \Longrightarrow H^{p+q}(\Sn_{\cC}^1,(E_{\cC})_*).\end{equation}
Let 
\begin{equation}\label{fEtildeE}
f : \widetilde{E}_1^{p,q}  \ra {E}_1^{p,q}\end{equation}
be the map of spectral sequences induced by the reduction modulo $(2)$ on the coefficients. Let $d_1: \widetilde{E}_1^{0,0} \to \widetilde{E}_1^{1,0}$ denotes the differential in the spectral sequence $\widetilde{E}_r^{p,q}$. Since $d_1(x) = x- \phi_{\alpha}(x)$, and $\alpha \equiv 1+\omega T^2 $ modulo $T^3$, it follows from \fullref{lem:c4zero} that
\[d_1(c_4) = 16(v_1v_2 +\ldots).\] 
\begin{defn}\label{defn:B1}
Let $B_1 \in \widetilde{E}_1^{1,0}$ be defined by $B_1 = \frac{d_1(c_4)}{16}$.
Since $ \widetilde{E}_1^{1,0}$ is torsion free, this specifies $B_1$ uniquely and
\[B_1 \equiv v_1v_2 \mod (2, u_1^2).\]
\end{defn}
\begin{prop}\label{prop:d1two1} There is an element $\db_1 \in E_1^{1,0}$ specified by the identity $f(B_1)=v_1\db_1$
 such that
$\db_1 \equiv v_2[1]$ modulo $(u_1^3)$ and $d_1(\db_1) =0$. 
\end{prop}
\begin{proof}
Let $B_1$ be as in \fullref{defn:B1}.
Then, $f(B_1)$ is divisible by $v_1$. Therefore, we can define an element $\db_1 \in E_1^{1,0}$ by
$\db_1:= v_1^{-1}f(B_1)$. Since $E_1^{1,0}$ is $v_1$--torsion free, this specifies $\db_1$ uniquely.
Further, it implies that $\db_1 \equiv v_2$ modulo $(2, u_1)$. Since $\db_1$ is an element of 
\[(E_{\cC})_6V(0)^{C_{6}} = \F_4[\![u_1^3]\!]\{v_2\}.\]
This forces the congruence $\db_1 \equiv v_2$ modulo $(2, u_1^3)$.

Finally, in the spectral sequence $\widetilde{E}_r^{p,q}$, we have
$d_1^2(c_4) = 16d_1(B_1)$.  
Since there is no torsion in $\widetilde{E}_1^{p,0}$ and $d_1^2=0$, $d_1(B_1)=0$ so that $d_1(v_1\db_1)=0$ in $E_1^{2,0}$. Since there is no $v_1$--torsion in $E_1^{p,0}$, this implies that $d_1(\db_1)=0$.
\end{proof}

\begin{prop}\label{prop:d1two2}
Let $n= 2^k( 3 + 4t)$. Define $\db_n = v_2^n$. Then
\begin{align*}
d_1(\db_n)&\equiv v_1^{3\cdot 2^k} v_2^{2^{k+1}(1+2t)} \mod (2, u_1^{6 \cdot 2^{k}}).
\end{align*}
\end{prop}
\begin{proof}
By (\ref{eqn:squaring}) is sufficient to prove the claim when $k=0$. First, note that if $h \in {S}_{\cC}$, by (\ref{eqn:t0gamma12}), 
\[t_0(h) \equiv 1+a_1(h)^2u_1+a_1(h)u_1^2 \mod (2,u_1^3)\]
where $a_1(h)^4 = a_1(h)$.
Therefore, $t_0(h)^4 \equiv 1$ modulo $(2, u_1^4)$ and
\begin{align*}
(id-\phi_h)(v_2^{3+4t})&= v_2^{3+4t}+t_0(h)^{-3(3+4t)}v_2^{3+4t}  \\
&\equiv v_2^{3+4t}+  t_0(h)^{3}v_2^{3+4t}  \mod (2, u_1^3) \\
&\equiv v_2^{3+4t} + v_2^{3+4t} (1+a_1(h)^2u_1+a_1(h)u_1^2+a_1(h)^4u_1^2)\mod (2, u_1^3) \\
&\equiv  v_2^{3+4t}a_1(h)^2u_1  \mod (2, u_1^3).
\end{align*}
For any $g \in S_{\cC}$, the image of $(2,u_1^3)$ under $id -\phi_g$ is in $(2,u_1^4)$. Further, since $t_0(g)^4 \equiv 1 \mod (2, u_1^4)$
\begin{align*}
(id-\phi_g)(v_2^{3+4t}u_1) &\equiv (1-t_0(g)^{-8-12t})v_2^{3+4t}u_1 \\
&\equiv 0 \mod (2, u_1^5).
\end{align*}
Hence, $(id-\phi_g)(id-\phi_h)(v_2^{3+4t}) \equiv 0$ modulo $(2,u_1^4)$. For $\phi_{\sE}$ be as in (\ref{BB}), it follows that
\begin{align*}
\phi_{\sE}(v_2^{3+4t}) \equiv 0 \mod (2,u_1^4).
\end{align*} 
Hence, since $\phi_{\alpha}(v_2) \equiv v_2 +v_1^3 \mod (2, u_1^9)$ (see \fullref{alphav2}),
\begin{align*}
d_1(v_2^{3+4t})  &=   v_2^{3+4t}+ \phi_{\alpha}(v_2^{3+4t}) +  \phi_{\sE}(v_2^{3+4t}) \\
&\equiv v_2^{3+4t} +(v_2+v_1^3)^{3+4t}   \\
&\equiv v_1^3 v_2^{2+4t} \mod (2, u_1^4).
\end{align*}
Since $d_1(\db_n)$ is $C_6$--invariant, the congruence can be improved to
$d_1(\db_n) \equiv v_1^3 v_2^{2(1+2t)}$ modulo $(2, u_1^{6})$.
\end{proof}

\begin{prop}\label{prop:d1two3}
Let $n=1+2^{k+2}(1+2t)$. Define $\db_n = \db_1\Delta^{2^{k}(1+2t)}$. Then
\begin{align*}
d_1(\db_{n}) 
&\equiv v_1^{3\cdot 2^{k+1}}v_2^{1+2^{k+1}(1+4t)} \mod (u_1^{3\cdot 2^{k+1}+3}).
\end{align*}
\end{prop}
\begin{proof}
By \fullref{LIG-2},
\[\phi_{\sE}(\db_n) \equiv \phi_{\sE}(\db_1) \Delta^{2^{k}(1+2t)}  \mod (2,u_1^{1+3\cdot 2^{k+1}}).\]
Therefore,
\begin{align*}
d_1(\db_{n}) &\equiv \db_n+\phi_{\alpha}(\db_n)+ \phi_{\sE}(\db_n) \\
&\equiv  \db_1\Delta^{2^{k}(1+2t)}+ \phi_{\alpha}(\db_1)\Delta^{2^{k}(1+2t)}(1+v_2^{-2}v_1^6)^{2^{k}(1+2t)}+ \phi_{\sE}(\db_1)\Delta^{2^{k}(1+2t)}  \\
&\equiv  \db_1\Delta^{2^{k}(1+2t)}+ \phi_{\alpha}(\db_1)\Delta^{2^{k}(1+2t)}(1+v_2^{-2^{k+1}}v_1^{3\cdot2^{k+1}})+ \phi_{\sE}(\db_1)\Delta^{2^{k}(1+2t)} \\
&\equiv \left(\db_1+\phi_{\alpha}(\db_1)+\phi_{\sE}(\db_1)\right)\Delta^{2^{k}(1+2t)}+ \phi_{\alpha}(\db_1)v_2^{-2^{k+1}}v_1^{3\cdot2^{k+1}}\Delta^{2^{k}(1+2t)} \\
&\equiv d_1(\db_1)\Delta^{2^{k}(1+2t)}+ \phi_{\alpha}(\db_1)v_2^{-2^{k+1}}v_1^{3\cdot 2^{k+1}}\Delta^{2^{k}(1+2t)} \mod (2, u_1^{1+3\cdot 2^{k+1}}).
\end{align*}
But $d_1(\db_1)=0$ and $\phi_{\alpha}(\db_1) \equiv v_2$ modulo $(2, u_1^3)$.
Furthermore, $\Delta^{2^{k}(1+2t)} \equiv v_2^{2^{k+2}+2^{k+3}t}$, so that
\begin{align*}
d_1(\db_{n}) &\equiv v_1^{3\cdot 2^{k+1}}v_2^{1-2^{k+1}+2^{k+2}+2^{k+3}t} \\
&\equiv v_1^{3\cdot 2^{k+1}}v_2^{1+2^{k+1}+2^{k+3}t} \mod (2, u_1^{3\cdot 2^{k+1}+1}).
\end{align*}
Since $d_1(\db_n)$ is $C_6$ invariant, the congruence holds modulo $(2,u_1^{3\cdot 2^{k+1}+3})$.
\end{proof}

\begin{proof}[Proof of \fullref{partial1}.]
Let $t \in \Z$ and $k\geq 0$
\[ \db_{n}:= \left\{ \begin{array}{ll}
 \db_1^n   & \mbox{$n = 0,1$};\\
 v_2^n   & \mbox{$n = 2^k( 3 + 4 t)$};\\
    \db_1\Delta^{2^{k}+2^{k+1}t}  & \mbox{$n=1+2^{k+2}(1+2t )$};\\
    v_1^{-6\cdot 2^k} d_1\left(\Delta^{2^k(2t+1)}\right)& \mbox{$n = 2^{k+1} (4t+1)$}.\end{array} \right. \] 
The element $\db_{n}$ is in degree $6n$ and
$\db_{n} \equiv v_2^n $ modulo $(2, u_1^3)$.
That $d_1(\db_0)=0$ follows from the fact that it is invariant under the action of $\Sn_{\cC}$. It  is the content of \fullref{prop:d1two1} that $d_1(\db_1)=0$. Let $n = 2^{k+1} (1+4t)$. Since $d_1$ is $v_1$--linear and there is no $v_1$--torsion in $E_1^{2,0}$, it follows from
\begin{align*}
d_1( v_1^{6 \cdot 2^k} \db_n) &= d_1^2\left( \Delta^{2^k(2t+1)}  \right) =0,
\end{align*}
that $d_1(\db_{n})=0$. The remaining claims follow from \fullref{prop:d1two2} and \fullref{prop:d1two3}.
\end{proof}

\subsection{The differential $d_1 : E_1^{2,0} \ra E_1^{3,0}$}\label{subsec:d13}

Recall that 
\[ E_1^{3,0}  \cong H^q(G_{24}', (E_{\cC})_*V(0)) = \F_4[\![j']\!][v_1, \Delta']/(j' = v_1^{12}\Delta'^{-1})\]
where $\Delta' = \phi_{\pi}(\Delta)$. We let $\Delta'[3] = \Delta' \cdot 1 \in E_1^{3,0}$. The next goal is to prove:
\begin{prop}\label{Tp2}
Let $n=2^k (1+2t)$ where $t \in \Z$ and $k\geq 0$. There exist homogeneous elements $\ob_{n}$ such that 
\begin{align}\label{cv}
\ob_{n} \equiv v_2^n[2]  \mod (u_1)
\end{align}
and
\begin{align*}
d_1( \db_{n}) &= \left\{ \begin{array}{ll}
v_1^{3\cdot 2^{k}}\ob_{2^{k+1}(1+2t)}  & \mbox{$ \ \ \ \ \ n = 2^k(3+4t)$}\\
     v_1^{3\cdot 2^{k+1}}\ob_{1+2^{k+1}(1+4t)} & \mbox{$\ \ \ \ \ n = 1+2^{k+2}(1+2t)$}\\
    0 & \mbox{\ \ \ \ \ otherwise} .\end{array} \right.  \end{align*}
Further, 
\[d_1(\ob_n) = v_1^{3(1+2^{k+1})}\Delta'^{2^k(1+2t)}[3] \mod (u_1^{3(1+2^{k+1})+12})\] 
if $n=1+2^{k+1}(3+4t)$ and is zero otherwise.
\end{prop}

\begin{proof}[Proof of \fullref{Tp2}.]
For $n=2^k(3+4t)$ and $n =1+ 2^{k+2}(1+2t)$, define $\ob_n$ by the identities
\[ d_1( \db_{n}) = \left\{ \begin{array}{ll}
v_1^{3\cdot 2^{k}}\ob_{2^{k+1}(1+2t)}  & \mbox{$n = 2^k(3+4t)$};\\
     v_1^{3\cdot 2^{k+1}}\ob_{1+ 2^{k+1}(1+4t)} & \mbox{$n =1+ 2^{k+2}(1+2t)$}.\end{array} \right. \] 
The classes $\ob_{2^{k+1}(1+2t)}$ and $\ob_{1+ 2^{k+1}(1+4t)}$ are well--defined since $E_1^{2,0}$ is torsion free. Further, the $\ob_n$ satisfies equation (\ref{cv}) and
$d_1(\ob_n) = 0$.

Let $m =1+ 2^{k+1}(1+4t)$. For $n= 1+2^{k+1}(3+4t)$, define
\[\ob_n = \ob_{m}(\Delta')^{2^{k}}.\]
Because $(\Delta')^{2^{k}} \equiv v_2^{2^{k+2}}$ modulo $(2, u_1)$, the elements $\ob_n$ satisfy (\ref{cv}). We will prove that
\begin{align}\label{eqn:step}
d_1(\ob_n) \equiv v_1^{3(2^{k+1}+1)}(\Delta')^{2^k(2t+1)} \mod (u_1^{3(2^{k+1}+1)+1}).
\end{align}
Because $d_1(\ob_n)$ is $G_{24}'$ invariant, if (\ref{eqn:step}) holds, then the congruence also holds modulo $(u_1^{3(1+2^{k+1})+12})$. This will finish the proof of the theorem.

By \fullref{thm:resnormS2}, the map $d_1:   E_1^{2,0} \rightarrow  E_1^{3,0}$ is given by
\begin{align*}
\phi_{\pi}(id+\phi_i+\phi_j+\phi_k)(id+\phi_{\alpha}^{-1}) \phi_{\pi}^{-1}.
\end{align*}
Since $ \phi_{\pi}^{-1}(\Delta') =\Delta$,
\[d_1(\ob_n)= \phi_{\pi}(id+\phi_i+\phi_j+\phi_k)(id+\phi_{\alpha}^{-1})(\phi_{\pi}^{-1}(\ob_m)\Delta^{2^k}).\]
By \fullref{alphav2}, we can $\phi_{\alpha} = \phi_{\alpha^{-1}}$ modulo $u_1^9$. By \fullref{prop:delta}, this implies that
\begin{align*}
\phi_{\alpha^{-1}}(\Delta^{2^k}) =\Delta^{2^k}(1+v_1^{6\cdot 2^k}v_2^{-2^{k+1}}) \mod (2,u_1^{9\cdot 2^k}).
\end{align*}
Hence, modulo $ (2, u_1^{9\cdot 2^k})$,
\begin{align*}
(id+\phi_{\alpha^{-1}})(\phi_{\pi}^{-1}(\ob_m)\Delta^{2k})&\equiv \phi_{\pi}^{-1}(\ob_m)\Delta^{2^k}+ \phi_{\alpha^{-1}}(\phi_{\pi}^{-1}(\ob_{m})) \Delta^{2^k}(1+ v_1^{6\cdot 2^k}v_2^{-2^{k+1}})  \\
&\equiv (id+\phi_{\alpha^{-1}})(\phi_{\pi}^{-1}(\ob_{m})) \cdot \Delta^{2^{k}} \\
& \ \ \ \ \ \ \ +  \phi_{\alpha^{-1}}(\phi_{\pi}^{-1}(\ob_{m}))(v_1^{6 \cdot 2^{k}}v_2^{-2^{k+1}}) \Delta^{2^{k}} .
\end{align*}
We treat both terms separately. First, note that $i$, $j$ and $k$ fix $\Delta$, so that
\begin{align*}
\phi_{\pi}\left((id + \phi_i+\phi_j+\phi_k)\left( (id+\phi_{\alpha^{-1}})(\phi_{\pi}^{-1}(\ob_m) ) \cdot \Delta^{2^{k}}\right)\right) &= d_1(\ob_m) \cdot (\Delta')^{2^{k}} =0.
\end{align*}
Next, note that $\phi_{\alpha^{-1}}\phi_{\pi^{-1}}(\ob_{m})  = \phi_{(\pi\alpha)^{-1}}(\ob_m)$. Since $\pi\alpha \in F_{2/2}\Sn_{\cC}$, it follows from \fullref{alphav2} that
 \[\phi_{\alpha^{-1}}\phi_{\pi^{-1}}(\ob_{m}) \equiv \ob_{m} \mod (2, u_1^3).\]
Since $\ob_m \equiv v_2^{1+2^{k+1}(1+4t)} \mod (2, u_1^3)$, this implies that
\[  \phi_{\alpha^{-1}}(\phi_{\pi}^{-1}(\ob_{m}))(v_1^{6 \cdot 2^{k}}v_2^{-2^{k+1}})  \equiv  v_1^{6 \cdot 2^{k}} v_2^{1+2^{k+3}t}  \mod  (2, u_1^{3(2^{k+1}+1)})\]
Further,  $(2, u_1^{9 \cdot 2^k}) \subseteq  (2, u_1^{3(2^{k+1}+1)})$ and, since $(id + \phi_i+\phi_j+\phi_k)$ is in $IS_{\cC}$ it maps $ (2, u_1^{3(2^{k+1}+1)})$ to the ideal $(2, u_1^{3(2^{k+1}+1)+1})$, we can ignore the error terms. Hence,
\begin{align*}
d_1(\ob_n) &= \phi_{\pi}((id + \phi_i+\phi_j+\phi_k)(v_2^{1+2^{k+3}t} )) \cdot  v_1^{6 \cdot 2^{k}}  (\Delta')^{2^{k}} \mod (2,v_1^{3(2^{k+1}+1)+1}).
\end{align*}
From \fullref{subsec:AutC}, 
\begin{align*}
t_0(i)^{-1} &= 1+u_1 & t_0(j)^{-1} &= 1+ \zeta u_1 & t_0(k)^{-1} &= 1+ \zeta^2 u_1.
\end{align*}
We have $t_0(i)^{-8} \equiv t_0(j)^{-8}\equiv t_0(k)^{-8}  \equiv 1 $ modulo $(2, u_1^8)$. Modulo $(2,u_1^8)$,
\begin{align*}
t_0(i)^{-3(1+2^{k+3}t)} &\equiv (1+u_1)^3 \equiv 1+u_1 +u_1^2+u_1^3  \\
t_0(j)^{-3(1+2^{k+3}t)} &\equiv (1+\zeta u_1)^3 \equiv 1+\zeta u_1 +\zeta^3 u_1^2+u_1^3 \\
t_0(k)^{-3(1+2^{k+3}t)} &\equiv (1+\zeta^2 u_1)^3 \equiv 1+\zeta^2 u_1 +\zeta u_1^2+u_1^3.
\end{align*}
Since $\phi_{\gamma}(v_2^n) = t_{0}(\gamma)^nv_2^n$,
\begin{align*} (id + \phi_i+\phi_j+\phi_k)(v_2^{1+2^{k+3}t})&\equiv v_1^3 v_2^{2^{k+3}t} \mod (2, u_1^8). \end{align*}
Hence,
\begin{align*}
d_1(\ob_n) &\equiv  v_1^{3(1+ 2^{k+1})}  \phi_{\pi}(v_2^{2^{k+3}t}) (\Delta')^{2^{k}} \mod (2,v_1^{3(2^{k+1}+1)+1}) \\
&\equiv  v_1^{3(1+ 2^{k+1})}  (\Delta')^{2^{k}(1+2t)} \mod (2,v_1^{3(2^{k+1}+1)+1}).
\end{align*}
The only element $\ob_n$ which has not been constructed is $\ob_1$. Its existence follows from \fullref{Lp2-2} below.
\end{proof}

\begin{lem}\label{Lp2-2}
There exists a sequence of elements $\{\ob_{1,n}\}$ such that
\begin{enumerate}[(1)]
\item \label{i1}$\ob_{1,n} \equiv v_2$ modulo $(u_1^6)$,
\item \label{i2}$d_1(\ob_{1,n}) \equiv 0$ modulo $(u_1^{3(1+4n)})$,
\item \label{i3}$\ob_{1,{n+1}} - \ob_{1,n} \equiv  0 $ modulo $(u_1^{6n})$.
\end{enumerate}
If $(E_{\cC})_6V(0)$ is given the topology induced by  the maximal ideal $\mathfrak{m} = (u_1)$,
then the limit
\begin{align*}
\ob_1 := \lim_{n\rightarrow \infty} \ob_{1,n}
\end{align*}
exists. The element $\ob_1$ satisfies equation (\ref{cv}) and
$d_1(\ob_1) = 0$.
\end{lem}

\begin{proof}
The construction of $\{\ob_{1,n}\}$ is by induction on $n$. First, define 
$\ob_{1,1} := v_2$
and note that
\begin{align*}
\ob_{1,1} + \phi_{\alpha^{-1}}(\ob_{1,1}) &\equiv v_1^3 +u_1^6\epsilon.
\end{align*}
The $\F_4$-vector space with basis
\begin{align*}
\{v_1^3, v_1^{3\cdot 5}\Delta'^{-1}, v_1^{3\cdot 9}\Delta'^{-2}, \ldots, v_1^{3(1+4s)}\Delta'^{-s}, \ldots \}
\end{align*} 
is dense in $\left((E_{\cC})_6V(0)\right)^{G_{24}'}$. Hence,
$d_1(\ob_{1,1}) \equiv 0$ modulo $(u_1^6)$. 

Suppose that $\ob_{1,n}$ has been defined. If $d_1(\ob_{1,n})=0$, then let $\ob_{1,{N}} : = \ob_{1,n}$ for all $N \geq n$. Otherwise,
\begin{align}\label{d1cn}
d_1(\ob_{1,n}) &= v_1^{3+12s_n}\Delta'^{-s_n} + \ldots 
\end{align}
for $s_n \geq n$. Let $s_n= 2^{k_n}(1+2t_n)$ and let $m_n =3\cdot 2^{k_n+1}(1+4t_n)$. Then 
$m_n \geq 6n$.
For
\[r_n = 1+2^{k_n+1}+2^{k_n+2}+2^{k_n+3}(-t_n-1),\]
(\ref{d1cn}) together with the fact that
\[d_1(\ob_{r_n}) = v_1^{3(1+2^{k_n+1})}\Delta'^{2^k_n(1+2(-t_n-1)} +\ldots,  \] 
implies that $d_1(\ob_{1,n}) =v_1^{m_n} d_1(\ob_{r_n}) + \ldots$
Define $\ob_{1,{n+1}} 
:=\ob_{1,n} +v_1^{m_n} \ob_{r_n}$. 
Then $\ob_{1,{n+1}}$ satisfies properties (\ref{i1}), (\ref{i2}) and (\ref{i3}).

Now consider the sequence $\{\ob_{1,n}\}$. Since $m_{n+k} \geq 6n$ for $k\geq 0$,
\begin{align*}
\ob_{1,{n+k}} - \ob_{1,{n}} &= v_1^{m_{n+1}}\ob_{r_{n+1}}+\ldots +  v_1^{m_{n+k}}\ob_{r_{n+k}} \in (u_1)^{6n},
\end{align*}
so the sequence $\{\ob_{1,n}\}$ is Cauchy. 
Since $((E_{\cC})_{6}V(0))^{C_6}$ is complete with respect to $\mathfrak{m}$, the limit exists and $\ob_1$ is well-defined.
The map $d_1$ is continuous, so that,
\begin{align*}
d_1(\ob_1)= \lim_{n\rightarrow \infty} d_1(\ob_{1,n}).
\end{align*}
But $d_1(\ob_{1,n}) \in \mathfrak{m}^{3(1+4N)}$ for all $n\geq N$, which implies that
\[d_1(\ob_1) \in \displaystyle \bigcap_{n=0}^{\infty} \mathfrak{m}^n =0.
\qedhere
\]
\end{proof}

\begin{rem}\label{rem:congruencesv}
Define
\begin{align*}
\dd_n &:= \left\{ \begin{array}{ll}
 \Delta^n[0] & \mbox{$\ \ \ \ \ \ \ \ \ \ \ \ \ \ \ \ \ \ \ \ \ \ \ \ \ \ \ \ \ \ \ \ \ \ \ \ \  n=2^k(1+2t)$}\\
1\cdot [0] & \mbox{$\ \ \ \ \ \ \ \ \ \ \ \ \ \ \ \ \ \ \ \ \ \ \ \ \ \ \ \ \ \ \ \ \ \  \ \ \   n=0$},\end{array} \right. \\
\od_n &:= \left\{ \begin{array}{ll}
 v_1^{-3(1+2^{k+1})}d_1(\ob_{1+2^{k+1}+2^{k+2} +2^{k+3}t})  & \mbox{$n=2^k(1+2t)$}\\
1 \cdot [3] & \mbox{$n=0$}.\end{array} \right. 
\end{align*}
Combining the results of this section to proves the first part of \fullref{thm:E2}. 

An analysis of the definition of the elements shows that the congruences stated in \fullref{thm:E2} can be improved as follows:
\begin{align*}
\dd_n &= \Delta^n[0]  \\
\db_n &= \begin{cases}  
v_2^{n}[1]  &\text{$n=0$ or $2^k( 3 + 4t)$}  \\
v_2^{n}[1] \mod (u_1^3) &\text{$n=1$ or $1+2^{k+2}(1+2t)$} \\
v_2^{n}[1] \mod (2, u_1^{3\cdot 2^k}) & n= 2^{k+1}(4t+1) 
\end{cases} \\
\ob_n & = \begin{cases}
v_2^{n}[2]  &n=0 \\
v_2^{n}[2] \mod (u_1^{3 \cdot 2^{k}}) & \text{$n=1$ or $n=2^{k+1}(1+2t)$} \\
v_2^{n}[2] \mod (u_1^{3}) &\text{$n=1+2^{k+1}(1+4t)$ or $n=1+2^{k+1}(3+4t)$} \\
\end{cases} \\
\od_n &=  \begin{cases} 1 \cdot [3]  &n=0 \\
 \Delta'^n[3] \mod (u_1^{12}) & n \neq 0
 \end{cases}
\end{align*}
\end{rem}

\subsection{The differentials $d_1 : E_1^{p,q} \ra E_1^{p+1,q}$ for $q>0$}\label{subsec:d1rest}
Although $V(0)$ is not a ring spectrum,
$(E_{\cC})_*V(0) \cong (E_{\cC})_*/2$, 
and a canonical generator is given by the image of the unit in $(E_{\cC})_0$ in the long exact sequence
\[\ldots  \ra (E_{\cC})_* \xra{2} (E_{\cC})_* \ra (E_{\cC})_*V(0)  \ra \ldots \]
Thus, $(E_{\cC})_*V(0)$ inherits a ring structure from $(E_{\cC})_*$. \fullref{S21-lin} implies that the ADSS for $(E_{\cC})_*V(0)$ is a module over $H^*(\Sn_{\cC}, (E_{\cC})_*V(0))$. The canonical inclusion of $\F_4$ into
$(E_{\cC})_*V(0)$
induces a map
\[H^*(\Sn_{\cC}^1, \F_4) \rightarrow H^*(\Sn_{\cC}^1, (E_{\cC})_*V(0))\]
and the ADSS for $(E_{\cC})_*V(0)$ is also a module over $H^*(\Sn_{\cC}^1, \F_4)$. 

Let 
 \begin{equation}\label{DRSS-ag-Tr}
 F_1^{p,q} = \Ext^q_{\Z_2[\![\Sn_{\cC}^1]\!]}(\sC_p, \F_4) \Longrightarrow H^{p+q}(\Sn_{\cC}^1,\F_4).\end{equation}
Let $k \in F_{1}^{0,4}$ be the periodicity generator for the cohomology of $G_{24}$,
(see \fullref{constant}). The extension
\[1 \ra K^1 \ra  \Sn_{\cC}^1 \ra G_{24} \ra 1 \]
is split. Therefore, the map
\[H^*(\Sn_{\cC}^1,\F_4) \ra H^*(G_{24},\F_4) \]
induced by the inclusion of $G_{24}$ in $\Sn_{\cC}^1$ is split surjective. This implies that the image of $k$ is a permanent cycle in $F_{1}^{0,4}$. Therefore, it represents a class
 \[k \in H^4(\Sn_{\cC}; \F_4),\]
and the differentials in the ADSS commute with the action of $k$. To make sense of this, we compute the action of $k$ on $E_1^{p,q}$. 

First, $k$ acts by multiplication by the element of the same name in $E_1^{0,q}$ and $E_1^{3,q}$. Further, the map
\[H^*(\Sn_{\cC}^1; \F_4) \rightarrow H^*(C_6;  (E_{\cC})_*V(0))\]
factors through the map
\[H^*(G_{24};  (E_{\cC})_*V(0)) \ra H^*(C_6;  (E_{\cC})_*V(0))\] 
induced by the inclusion of $C_6$ in $G_{24}$. Therefore, $k$ acts by multiplication by $h^4$ on $E_1^{p,q}$ for $p=1$ and $p=2$. We collect these remarks in the following lemma.
\begin{lem}\label{g0-lin}
The differentials in the ADSS are $k$--linear, where the action of $k$ is given by multiplication by $k$ on $E_r^{0,*}$ and $E_r^{3,*}$, and by multiplication by $h^4$ on $E_r^{p,*}$ for $p=1,2$.
\end{lem}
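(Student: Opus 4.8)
The plan is to prove the statement in three stages: promote $g_0$ to a genuine cohomology class on all of $\Sn_{\cC}^1$; deduce $g_0$-linearity of every differential from the module structure already recorded in \myref{S21-lin}; and then identify, column by column, the operator that $g_0$ induces on the $E_1$-page. First I would use that the extension $1\to K^1\to\Sn_{\cC}^1\to G_{24}\to1$ is split: a section makes the restriction $\mathrm{res}^{\Sn_{\cC}^1}_{G_{24}}\colon H^*(\Sn_{\cC}^1;\F_4)\to H^*(G_{24};\F_4)$ split surjective. Equivalently, in the spectral sequence (\ref{DRSS-ag-Tr}) the edge map onto $E_\infty^{0,*}\subseteq E_1^{0,*}=H^*(G_{24};\F_4)$ is surjective, so the periodicity generator $g_0\in H^4(G_{24};\F_4)$ is a permanent cycle and lifts to a class $\tilde g_0\in H^4(\Sn_{\cC}^1;\F_4)$ with $\mathrm{res}_{G_{24}}(\tilde g_0)=g_0$.

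For the linearity, I would invoke \myref{S21-lin} with $R=(E_{\cC})_*V(0)$: the spectral sequence is a module over $\Ext(\Z_2,R)=H^*(\Sn_{\cC}^1;R)$ and every $d_r$ is linear over it. Precomposing with the ring map $H^*(\Sn_{\cC}^1;\F_4)\to H^*(\Sn_{\cC}^1;R)$ induced by the unit $\F_4\to R$ makes the $d_r$ linear over the image of $\tilde g_0$, which is precisely the asserted $g_0$-linearity, valid on every page.

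It remains to compute the operator. Under the Shapiro isomorphism $E_1^{p,*}=\Ext^*(\sC_p,R)\cong H^*(H_p;R)$, with $H_0=G_{24}$, $H_1=H_2=C_6$ and $H_3=G_{24}'$, the module structure of \myref{S21-lin} is the standard one, so $g_0$ acts on the $p$-th column by multiplication by the image in $H^*(H_p;R)$ of $\mathrm{res}^{\Sn_{\cC}^1}_{H_p}(\tilde g_0)$. The basic input is that the $Q_8$-periodicity class restricts to the central $C_2$ as $h^4$, so $\mathrm{res}_{C_6}(g_0)=h^4\neq0$. For $p=1,2$ we have $C_6\subseteq G_{24}$, so the operator is $\mathrm{res}^{G_{24}}_{C_6}(\Delta^{-1}g)$; restricting the relation $v_1^4g=h_1^4\Delta$ of \myref{TBG24} and using $\mathrm{res}_{C_6}(h_1)=v_1h$ from \myref{HC6} gives $v_1^4\,\mathrm{res}_{C_6}(g)=v_1^4h^4\Delta$, hence $\mathrm{res}_{C_6}(g)=h^4\Delta$ by $v_1$-torsion-freeness, and the operator is $h^4$. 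For $p=0$ the image of $g_0$ lies in the internal-degree-$0$ part of $H^4(G_{24};R)$, which by \myref{TBG24} is the one-dimensional space $\F_4\{\Delta^{-1}g\}$; since $\mathrm{res}_{C_6}(\Delta^{-1}g)=h^4=\mathrm{res}_{C_6}(g_0)\neq0$, the operator is exactly $\Delta^{-1}g$.

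The case $p=3$ is where the real work lies, since $\tilde g_0$ was only pinned down on $G_{24}$ and nothing is a priori known about $\mathrm{res}_{G_{24}'}(\tilde g_0)$. The observation that breaks the impasse is that $\pi=1+2\omega$ commutes with both $\omega$ and $-1$, so $C_6\subseteq G_{24}\cap G_{24}'$; therefore $\mathrm{res}^{\Sn_{\cC}^1}_{C_6}(\tilde g_0)=\mathrm{res}^{G_{24}'}_{C_6}\big(\mathrm{res}_{G_{24}'}(\tilde g_0)\big)$, and as the left-hand side equals $h^4\neq0$ the class $\mathrm{res}_{G_{24}'}(\tilde g_0)$ cannot vanish. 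Since $H^4(G_{24}';\F_4)$ is one-dimensional it is the periodicity generator $g_0'$, and transporting the $p=0$ computation along the isomorphism $\phi_\pi$ of \myref{HG24'} (with $\Delta'=\phi_\pi(\Delta)$, $g'=\phi_\pi(g)$, so that $v_1^4g'=h_1^4\Delta'$) identifies its image in $H^4(G_{24}';R)$ with $\Delta'^{-1}g'$. Finally, since $g_0$ is a permanent cycle, multiplication by it descends from $E_1$ to a well-defined operator on every later page $E_r$, so all of the identifications persist for $r\geq1$. The two steps demanding the most care are the compatibility of the \myref{S21-lin} module structure with Shapiro's isomorphism—standard, but worth verifying explicitly—and the non-vanishing at $p=3$ just described.
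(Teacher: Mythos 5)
Your proposal is correct and follows essentially the same route as the paper: the split extension makes $g_0$ a permanent cycle lifting to $H^4(\Sn_{\cC}^1;\F_4)$, linearity of the differentials comes from the module structure of \myref{S21-lin}, and the action on each column is identified via restriction. The paper simply asserts the identifications on $E_1^{0,*}$ and $E_1^{3,*}$ and deduces the $p=1,2$ case from the factorization through $H^*(G_{24};(E_{\cC})_*V(0))$, so your explicit verifications --- the computation $\mathrm{res}_{C_6}(g)=h^4\Delta$ from $v_1^4g=h_1^4\Delta$, and especially the $p=3$ argument via $C_6\subseteq G_{24}\cap G_{24}'$ --- supply details the paper leaves implicit. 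One small correction: by \myref{TBG24} the internal-degree-zero part of $H^4(G_{24};(E_{\cC})_*V(0))$ is $\F_4[[j]]\{\Delta^{-1}g\}$ rather than the one-dimensional space $\F_4\{\Delta^{-1}g\}$ (note $jg\Delta^{-1}=v_1^8h_1^4\Delta^{-1}\neq 0$); your restriction-to-$C_6$ step still pins the image of $g_0$ down to $\Delta^{-1}g$, because $j$ restricts to $u_1^{12}(1+u_1^3)^{-3}$, so $\F_4[[j]]$ injects into $H^0(C_6;(E_{\cC})_*V(0))$ and the condition $\lambda(j)h^4=h^4$ forces $\lambda(j)=1$.
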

This will allow us to compute some of the differentials $d_1: E_1^{p,q} \rightarrow E_1^{p+1,q}$ for $q>0$ based on our results for $q=0$.
\begin{lem}\label{lem:d1klin}
Let $x \in E_1^{0,q}$. The differential $d_1: E_1^{0,q} \rightarrow E_1^{1,q}$ is zero unless $x = v_1^r\eta^s\Delta^t$ or $x =  v_1^r k^s  \Delta^t$, in which case it is given by
\[d_1( v_1^r \eta^s\Delta^t) =  v_1^r\eta^s d_1(\Delta^t)\]
and
\[d_1(  v_1^r k^s \Delta^t) =  v_1^r h^{4s} d_1(\Delta^{t}).\]
\end{lem}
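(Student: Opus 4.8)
The plan is to exploit two linearity constraints on $d_1$ together with one finiteness property of the target. The constraints are the $\F_4[v_1,h_1]$-linearity of the spectral sequence (\myref{v1linear}) and the $g_0$-linearity of its differentials (\myref{g0-lin}); the finiteness property is that the target $E_1^{1,q}=H^q(C_6,(E_{\cC})_*V(0))$ is a \emph{free} $\F_4[v_1]$-module by \myref{HC6}, hence has no $v_1$-torsion. The whole computation then bootstraps off the $q=0$ results \myref{partial0}, \myref{partial1} and \myref{Tp2}.

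First I would reduce to the $v_1$-torsion-free generators of the source. By \myref{TBG24}, $E_1^{0,q}=H^q(G_{24},(E_{\cC})_*V(0))$ is generated as an $\F_4[v_1]$-module by monomials in $\Delta^{\pm1},h_1,h_2,x,y,g$. The relations $v_1h_2=0$, $v_1y=0$ and $v_1^2x=0$ show that every monomial having $h_2$, $x$ or $y$ as a factor is annihilated by a power of $v_1$. Since $d_1$ is $v_1$-linear, if $v_1^Nz=0$ then $v_1^Nd_1(z)=d_1(v_1^Nz)=0$, and because $E_1^{1,q}$ has no $v_1$-torsion this forces $d_1(z)=0$. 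Hence $d_1$ vanishes on all such torsion generators, and the only generators on which it can be nonzero are the $v_1$-torsion-free ones. Using $v_1^4g=h_1^4\Delta$ to reduce $h_1$-powers, these are exactly the $\F_4[v_1]$-span of the monomials $h_1^tg^u\Delta^s$, and in particular they include the classes $h_1^t\Delta^s$ and $g^t\Delta^s$ named in the statement.

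Next I would compute $d_1$ on these generators by linearity. For $x=h_1^t\Delta^s$, $h_1$-linearity gives at once
\[d_1(h_1^t\Delta^s)=h_1^t\,d_1(\Delta^s).\]
For $x=g^t\Delta^s$ I would rewrite the class through the periodicity operator $g_0$. By \myref{g0-lin}, $g_0$ acts on $E_1^{0,*}$ as multiplication by $\Delta^{-1}g$, so that $g_0^t\cdot\Delta^{s+t}=g^t\Delta^s$, while it acts on $E_1^{1,*}$ as multiplication by $h^4$. Since $d_1$ is $g_0$-linear,
\[d_1(g^t\Delta^s)=d_1\!\left(g_0^t\cdot\Delta^{s+t}\right)=g_0^t\,d_1(\Delta^{s+t})=h^{4t}\,d_1(\Delta^{s+t}).\]
The mixed torsion-free generators $h_1^tg^u\Delta^s$ are handled by combining the two computations, and in each case the right-hand side is already known from \myref{partial0}.

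The main obstacle is the bookkeeping in this last step. One must carefully distinguish the cohomology class $g\in H^4(G_{24})$ from the operator $g_0\in H^4(\Sn_{\cC}^1;\F_4)$, and track that $g_0$ acts by $\Delta^{-1}g$ on the $p=0$ column but by $h^4$ (where $h_1=v_1h$ on $C_6$-cohomology, by \myref{HC6}) on the $p=1$ column; only after this identification does $g_0$-linearity translate into the stated formula. The other point requiring care is the clean splitting of $H^*(G_{24})$ into its $v_1$-torsion-free and $v_1$-power-torsion parts, which rests entirely on the explicit relations of \myref{TBG24}. Once that splitting is in hand, the vanishing of $d_1$ on the torsion part is immediate from the torsion-freeness of the target established via \myref{HC6}.
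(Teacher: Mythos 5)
Your proof is correct and follows essentially the same route as the paper's: use $v_1$-linearity plus the absence of $v_1$-torsion in $E_1^{1,q}$ to kill the differential on the torsion part of $H^*(G_{24},(E_{\cC})_*V(0))$, then read off the remaining cases from $h_1$-linearity and $g_0$-linearity via the identification of $g_0$ with $\Delta^{-1}g$ on the $p=0$ column and with $h^4$ on the $p=1$ column. Your added bookkeeping (isolating the torsion generators $h_2,x,y$ via the relations of \myref{TBG24}, and noting the mixed monomials $h_1^tg^u\Delta^s$) only makes explicit what the paper leaves implicit.
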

\begin{proof}
There is no $v_1$--torsion in $E_1^{1,q}$, and $d_1$ is $v_1$--linear. Therefore, if $x$ is $v_1$--torsion, we must have $d_1(x) =0$. The only classes in $E_1^{0,q}$ which are not $v_1$--torsion are of the form $x = v_1^r\eta^s\Delta^t$ or $x = v_1^r k^s  \Delta^t$. The statement then follows from the $\eta$ and $k$--linearity of the differentials.
\end{proof}

\begin{lem}\label{lem:d1hlin}
Let $x \in E_1^{1,q}$. The differential $d_1: E_1^{1,q} \rightarrow E_1^{2,q}$ satisfies
\[h^kd_1(x) = d_1(h^k x).\]
\end{lem}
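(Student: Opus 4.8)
The plan is to deduce the full $h$-linearity from the $h_1$-linearity that has already been installed on the spectral sequence, exploiting the relation $h_1 = v_1 h$ together with the absence of $v_1$-torsion. First I would recall from \myref{v1linear} (which rests on \myref{S21-lin}) that the algebraic duality resolution spectral sequence is a module over $\F_4[v_1,h_1]$, so that every differential, and in particular $d_1 : E_1^{1,q} \to E_1^{2,q}$, is linear over both $v_1$ and $h_1$; thus $d_1(v_1 y) = v_1 d_1(y)$ and $d_1(h_1 y) = h_1 d_1(y)$ for all $y$.

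Next I would identify the $h_1$-action on $E_1^{1,*} \cong E_1^{2,*} \cong H^*(C_6;(E_{\cC})_*V(0))$ with multiplication by $v_1 h$. The $\F_4[v_1,h_1]$-module structure on $E_1^{p,q} = \Ext^q_{\Z_2[[\Sn_{\cC}^1]]}(\sC_p,(E_{\cC})_*V(0))$ is, under the Shapiro identification $\Ext^q(\sC_p,-) \cong H^q(C_6;-)$ for $p=1,2$, induced by restriction along $C_6 \hookrightarrow \Sn_{\cC}^1$ followed by cup product. Since $h_1 = \beta(v_1)$ and restriction commutes with the Bockstein, the image of $h_1$ in $H^1(C_6;(E_{\cC})_*V(0))$ is $\beta(v_1) = v_1 h$ by the proof of \myref{HC6}; see (\ref{v1h}). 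Consequently $h_1 y = v_1(h y)$ for every $y \in H^*(C_6;(E_{\cC})_*V(0))$, in all cohomological degrees $q$.

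Combining these two inputs, for $x \in E_1^{1,q}$ I would compute
\[ v_1\, d_1(hx) = d_1(v_1 h x) = d_1(h_1 x) = h_1\, d_1(x) = v_1 h\, d_1(x), \]
where the outer equalities use $v_1$-linearity and the definition of the $h_1$-action, and the middle equality is $h_1$-linearity. Because $E_1^{2,*} = H^*(C_6;(E_{\cC})_*V(0))$ is an integral domain (it embeds in $\F_4[[u_1]][u^{\pm 1}]$, as follows from \myref{HC6}), multiplication by $v_1$ is injective, so cancelling $v_1$ yields $d_1(hx) = h\, d_1(x)$. The general case then follows by induction on $k$, since $d_1(h^k x) = d_1(h\cdot h^{k-1}x) = h\, d_1(h^{k-1}x) = h^k d_1(x)$.

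The arithmetic here is immediate once the two ingredients are assembled, so the only genuine point of care — which I would regard as the crux — is verifying that the $h_1$-action coincides with multiplication by $v_1 h$ in every degree and that multiplication by $v_1$ is injective on $E_1^{2,*}$; both are read off from the explicit ring structure of $H^*(C_6;(E_{\cC})_*V(0))$ in \myref{HC6}, exactly as the earlier cancellation arguments in this section have used.
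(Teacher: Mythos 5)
Your proposal is correct and follows essentially the same route as the paper: both use $h_1 = v_1 h$ together with $v_1$- and $h_1$-linearity of $d_1$ to get $v_1^k d_1(h^k x) = v_1^k h^k d_1(x)$, and then cancel $v_1^k$ using the absence of $v_1$-torsion in $E_1^{2,q} \cong H^*(C_6;(E_{\cC})_*V(0))$. The only cosmetic differences are that you handle $k=1$ and induct rather than treating all $k$ at once, and you spell out the identification of the $h_1$-action with multiplication by $v_1h$, which the paper takes as read from \myref{HC6}.
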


\begin{proof}
This follows from the fact that the differentials are $\eta = hv_1$ and $v_1$--linear. Indeed, since $\eta = v_1 h$, we have the following equalities
\[ v_1^k h^k  d_1(x) = \eta^k d_1(x) =  d_1(\eta^k x) = d_1(v_1^k  h^k x) = v_1^k d_1 (h^k x) .\]
There is no $v_1$ or $h$--torsion in $E_1^{1,q}$ and $E_1^{2,q}$, so $h^kd_1(x) = d_1(h^k x)$.
\end{proof}

Understanding the differential $d_1: E_1^{2,q} \rightarrow E_1^{3,q}$ is more subtle as there is $v_1$--torsion in $E_1^{3,q}$ for $q > 0$. We will use the following result. Its proof is postponed until the end of the section.
\begin{lem}\label{v13-div}
For $x \in E_1^{2,0}$, there is a unique $y \in E_1^{3,0}$ with
$d_1(x)=v_1^3y$.
\end{lem}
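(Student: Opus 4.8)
Set $M = (E_{\cC})_*V(0)$. Since $v_1$ is $\Sn_{\cC}^1$-invariant and is not a zero divisor in $M$, multiplication by $v_1^3$ is an injective map of $\Sn_{\cC}^1$-modules, giving a short exact sequence of $\Sn_{\cC}^1$-modules
\[ 0 \to M \xra{\ v_1^3\ } M \to \ol{M} \to 0, \qquad \ol{M} := M/v_1^3M. \]
This is the organizing idea of the approach: it produces a map of algebraic duality resolution spectral sequences, hence for each bidegree a commuting square relating the differential $d_1$ with coefficients $M$ to the differential $\ol{d}_1$ with coefficients $\ol{M}$. Using the isomorphism of complexes (\ref{diagram}) together with the $v_1$-linearity of the maps $g_p^*$ (they are morphisms of $H^*(\Sn_{\cC}^1,M)$-modules), it suffices to prove the analogous statement for the differential $d_1'$ of the resolution (\ref{M'}), namely that the image of $d_1'\colon F_1^{2,q}\to F_1^{3,q}$ lies in $v_1^3 F_1^{3,q}$; the statement for $d_1$ then follows by transporting along $g_2^*,g_3^*$, since $d_1\circ g_2^*=g_3^*\circ d_1'$ and $g_3^*$ is $v_1$-linear.

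First I would identify the kernel of the reduction map on the target. The long exact sequence in $H^*(G_{24}',-)$ attached to the short exact sequence above, combined with the isomorphism $M\xra{v_1^3}v_1^3M$ of $\Sn_{\cC}^1$-modules, identifies
\[ \ker\!\big(F_1^{3,q}(M)\to F_1^{3,q}(\ol{M})\big) \]
with the image of multiplication by $v_1^3$ on $F_1^{3,q}(M)$, that is, with $v_1^3 F_1^{3,q}(M)$. Hence it is enough to show that the reduced differential $\ol{d}_1'\colon F_1^{2,q}(\ol{M})\to F_1^{3,q}(\ol{M})$ vanishes: then $d_1'(x')$ reduces to $0$ in $F_1^{3,q}(\ol{M})$ for every $x'$, which forces $d_1'(x')\in v_1^3 F_1^{3,q}(M)$, as desired.

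It remains to prove $\ol{d}_1'=0$. By \myref{res} the differential $d_1'$ is induced by $\partial_3'(e_3)=\theta e_2$ with $\theta=\pi(e+i+j+k)(e-\alpha^{-1})\pi^{-1}$; writing $A=\pi(e+i+j+k)\pi^{-1}$ and $\beta=\pi\alpha^{-1}\pi^{-1}$ gives $\theta=A(e-\beta)$. Since $\alpha^{-1}\in F_{2/2}\Sn_{\cC}$ and conjugation by the unit $\pi$ preserves the two-sided ideal generated by $T^2$, we have $\beta\in F_{2/2}\Sn_{\cC}^1$, so \myref{2/2} gives $t_0(\beta)\equiv 1 \pmod{(2,v_1^3)}$ and hence $\phi_\beta$ acts as the identity on $\ol{M}$. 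For $q=0$ this already yields $\ol{d}_1'=0$, since the map is $n\mapsto A(e-\beta)\cdot n=A(n-\phi_\beta n)=0$. For $q>0$ I would compute $\ol{d}_1'$ on the double complex $\Hom_{\Z_2[[\Sn_{\cC}^1]]}(\sC_\bullet\otimes P_\bullet,\ol{M})$, where $P_\bullet$ resolves $\Z_2$ by finitely generated free $\Z_2[[\Sn_{\cC}^1]]$-modules. There the operator induced by $\theta$ carries a cochain $f$ to $\sum_a a\ast(f-\beta\ast f)$, where $a$ runs over $\{e,\pi i\pi^{-1},\pi j\pi^{-1},\pi k\pi^{-1}\}$ and $w\ast f$ is the conjugation action $(w\ast f)(x)=w\cdot f(w^{-1}x)$. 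Because $\beta$ acts trivially on $\ol{M}$, one has $f-\beta\ast f=f\circ u$, where $u$ is the $\Z_2$-linear chain self-map of $P_\bullet$ given by multiplication by $e-\beta^{-1}$; as $e-\beta^{-1}$ lies in the augmentation ideal, $u$ covers the zero map on $\Z_2$ and is $\Z_2$-null-homotopic by the comparison theorem. This is the mechanism that should trivialize the $(e-\beta)$ factor.

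I expect this last step to be the main obstacle. For $q=0$ the factor $(e-\alpha^{-1})$ acts on coefficients as $\mathrm{id}-\phi_{\alpha^{-1}}$, which lands in $v_1^3M$ directly by \myref{2/2}; for $q>0$ the element $\beta$ also acts through the resolution $P_\bullet$, and although the induced operator on $\ol{M}$-valued cochains is null-homotopic, the homotopy produced above is only $\Z_2$-linear and is \emph{not} $G_{24}'$-equivariant—precisely because $\beta$ does not normalize the finite subgroups $C_6$ and $G_{24}'$. Upgrading this to an honest vanishing of $\ol{d}_1'$ in $G_{24}'$-cohomology is the delicate, genuinely $2$-primary point, and one cannot average the obstruction away since $|G_{24}'|$ is even. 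As a consistency check and partial tool, $h_1$-linearity of the spectral sequence (with $h_1=v_1h$) together with the computation of $d_1\colon E_1^{2,0}\to E_1^{3,0}$ in \myref{Tp2} already forces $d_1(x)$ to be $v_1^3$-divisible up to a $v_1^q$-power-torsion correction term; the real content of the lemma is then that these residual torsion terms carry no obstruction to $v_1^3$-divisibility, which is exactly what the vanishing of $\ol{d}_1'$ encodes.
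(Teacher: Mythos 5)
Your reduction is sound as far as it goes: factoring out the $(e-\alpha^{-1})$ piece, observing that $\phi_{\alpha^{-1}}\equiv \mathrm{id}$ modulo $(2,v_1^3)$, and noting that the kernel of $F_1^{3,q}(M)\to F_1^{3,q}(M/v_1^3)$ is exactly $v_1^3F_1^{3,q}(M)$ are all correct, and they do settle the case $q=0$. But the proposal does not actually prove the lemma for $q>0$, and you say so yourself: the null-homotopy you produce for the operator $u$ induced by $e-\beta^{-1}$ is only $\Z_2$-linear, so it tells you nothing about the induced map on $\Ext_{\Z_2[[\Sn_{\cC}^1]]}$, and you correctly observe that you cannot average it to an equivariant one since the relevant finite groups have even order. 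That missing step is the entire content of the lemma in positive cohomological degrees, so as written this is a genuine gap rather than a complete proof.

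The way the paper closes this gap is worth internalizing, because it is an explicit chain-level construction rather than a homotopy-theoretic trick. After factoring $\partial_3$ through $\sC_0\xra{f}\sC_1$ with $f(\gamma e_0)=\gamma(e+i+j+k)(e-\alpha^{-1})e_1$ (which cleanly isolates the conjugation by $\pi$ into isomorphisms that are harmless for $v_1$-divisibility), one lifts $f$ to a chain map $F_*\colon \Ind_{G_{24}}^{\Sn_{\cC}^1}(C_*)\to \Ind_{C_6}^{\Sn_{\cC}^1}(B_*)$ between the explicit periodic resolutions of Appendix I (\myref{liftF}). The key structural feature of that lift is that \emph{every} component $F_k$ is right multiplication by an element of the form $G_k\cdot(e-\alpha^{-1})$ of the group ring. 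Consequently, on cochains $F_k^*=G_k^*\circ(\phi_e-\phi_{\alpha^{-1}})$, i.e.\ the $(e-\alpha^{-1})$ factor acts purely through the coefficients in every cohomological degree, not through the resolution. Since $(\phi_e-\phi_{\alpha^{-1}})$ lands in $v_1^3M$ and $G_k^*$ is $v_1$-linear, every cocycle in the image is $v_1^3$ times a cochain, which is itself a cocycle because the Hom-complex has no $v_1$-torsion. So the idea you were missing is not an equivariant null-homotopy of $e-\beta^{-1}$ (which need not exist), but an equivariant lift of $f$ in which the augmentation-ideal factor is pushed entirely onto the coefficient module; verifying that such a lift exists requires the explicit small resolutions and is where the real work lives.
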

\begin{proof}
Let $\tau' = \pi \tau \pi^{-1}$.
Recall that $d_1 : E_1^{2,q} \ra E_1^{3,q}$ is given by 
\[  \phi_{\pi} (id+\phi_{i}+\phi_{j}+\phi_{k})  (e-\phi_{\alpha}^{-1}) \phi_{\pi}^{-1}=(id+\phi_{i'}+\phi_{j'}+\phi_{k'}) (e-\phi_{\alpha}^{-1}). \]
Further, this factors as the composite
\[\xymatrix{E_1^{2,q} \ar[rrr]^-{e-\phi_{\alpha}^{-1}} & & & E_1^{2,q} \ar[rrr]^-{(id+\phi_{i'}+\phi_{j'}+\phi_{k'})} & & & E_1^{3,q}.}  \]
Let $x \in E_1^{2,0}$. It follows from \fullref{alphav2} that
there exists $z \in E_1^{2,q}$ such that
$(e-\phi_{\alpha}^{-1})(x) = v_1^3 z$. Then, by $v_1$--linearity, 
\[d_1(x) = v_1^3 (id+\phi_{i'}+\phi_{j'}+\phi_{k'}) (z),\]
and $y = (id+\phi_{i'}+\phi_{j'}+\phi_{k'}) (z)$. This element is uniquely determined since $E_1^{3,0}$ is $v_1$--torsion free.
\end{proof}

\begin{lem}\label{lem:d1khlin}
Let $x$ be an element of $E_1^{2, 0}$. Consider $d_1 : E_1^{2,q} \rightarrow E_1^{3,q}$. Let $q= 4t+s$ for $0\leq s \leq 3$. Then
$d_1(h^q x) =k^t \eta^s (v_1^{-s} d_1(x))$,
where $ (v_1^{-s} d_1(x))$ is uniquely determined since $E_1^{3,0}$ is $v_1$--torsion free.
\end{lem}
\begin{proof}
Let $y$ be as in \fullref{v13-div} so that $d_1(x) = v_1^3 y$.
Since $\alpha$ is in the centralizer of $C_6$ in $\mathbb{S}_2$, it has a trivial action on $H^*(C_6, \F_4) \cong \F_4[h]$. Hence, $e-\phi_{\alpha}^{-1}: E_1^{2,*} \ra E_1^{2,*}$ is $h$--linear. Let $q = 4t+s$ as in the statement of the result. Recall that $\eta = v_1h$, that $k = h^{4}$ and that all maps are $v_1$, $k$ and $\eta$--linear. Therefore, 
\begin{align*}
d_1(h^q x) &=  (id+\phi_{i'}+\phi_{j'}+\phi_{k'})(e-\phi_{\alpha^{-1}})(h^q x) \\
&=  k^t (id+\phi_{i'}+\phi_{j'}+\phi_{k'}) (h^s (e-\phi_{\alpha^{-1}})(x)) \\
&=    k^t  (id+\phi_{i'}+\phi_{j'}+\phi_{k'}) (\eta^s v_1^{3-s} z)\\
&=   k^t \eta^s v_1^{3-s} y \\
&=   k^t \eta^s v_1^{-s} d_1(x). \qedhere
\end{align*}

\end{proof}
This completes the computation of the $E_2$--term (see \fullref{fig:SSRS-E2}).

\subsection{Higher Differentials}\label{sec:higher}
In this section, we prove that all differentials $d_r: E_r^{0,q} \rightarrow E_{r}^{r, q-r+1}$ for $r\geq2$ are zero. Because of the sparsity of the spectral sequence, the only differentials $d_r$ for $r\geq 2$ which do not have a zero target are 
\begin{align*}
d_2 &: E_2^{0,q} \rightarrow E_{2}^{2, q-1}, \ q\geq 2\\
d_2 &: E_2^{1,q} \rightarrow  E_{2}^{3, q-1},\ q\geq 2\\
d_3 &: E_3^{0,q} \rightarrow E_{3}^{3, q-2},  \ q\geq 3.
\end{align*}

The proof of the following result is a direct computation similar to that of \fullref{HC6}.
\begin{lem}
Let $v_1$ have degree $(s,t) = (0,2)$, $v_2$ have degree $(0,6)$, and $h$ have degree $(1,0)$. 
Then
\[ H^*(C_6 ; (E_{\cC})_*) \cong \W[\![u_1^3]\!][v_1^2, v_1v_2, v_2^{\pm 1}, h]/(2h). \]
\end{lem}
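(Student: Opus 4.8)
The plan is to reduce the computation to the $2$-Sylow subgroup and run the standard periodic resolution. Since $C_6=C_2\times C_3$ and $3$ is a unit in $\W$, the functor of $C_3$-invariants is exact and restriction gives $H^*(C_6;(E_{\cC})_*)\cong H^*(C_2;(E_{\cC})_*)^{C_3}$, exactly as in the mod $2$ computation of \myref{HC6}. Writing $M=(E_{\cC})_*=\W[[u_1]][u^{\pm1}]$ and recalling $\phi_{-1}(u)=-u$, $\phi_{-1}(u_1)=u_1$, the generator $\sigma=\phi_{-1}$ of $C_2$ splits $M=M^+\oplus M^-$ into its $\pm1$-eigenspaces, with $M^+=\W[[u_1]][u^{\pm2}]$ the $u$-even part and $M^-=uM^+$ the $u$-odd part.

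First I would compute $H^*(C_2;M)$ from the standard $2$-periodic free resolution of the trivial $\Z_2[C_2]$-module, whose associated cochain complex has differentials alternately $1-\sigma$ and $1+\sigma$. Because $1\mp\sigma$ acts by $0$ or by $2$ on $M^{\pm}$, this yields $H^0(C_2;M)=M^+$, $H^{2i}(C_2;M)=M^+/2$ for $i\ge1$, and $H^{2i+1}(C_2;M)=M^-/2$ for $i\ge0$. I would then record the ring structure: writing $\eta\in H^2(C_2;\W)$ for the polynomial generator, which satisfies $2\eta=0$, and $h$ for the degree-one class with $h^2=\eta$, one has $H^*(C_2;M)\cong M^+[\eta,\,uh]/(2\eta,\,2uh,\,(uh)^2-u^2\eta)$, where $u^2\in M^+$. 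This is the integral refinement of the mod $2$ identity $H^*(C_2;(E_{\cC})_*V(0))=((E_{\cC})_*V(0))[h]$ used in \myref{HC6}: integrally the class $h$ survives only after twisting by the sign-carrying unit $u$.

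Next I would pass to $C_3$-invariants, i.e.\ extract the weight-zero part for $u,u_1\mapsto\zeta u,\zeta u_1$ (with the cohomology classes $\eta$ and $h$ fixed). On $M^+$ this returns the torsion-free ring $H^0(C_6;M)=(M^+)^{C_3}=(E_{\cC})_*^{C_6}$, generated over $\W[[u_1^3]]$ by $v_1^2$, $v_1v_2$ and $v_2^{\pm2}$. The classes in positive cohomological degree are all $2$-torsion and come from the weight-zero $u$-odd elements $v_1=u_1u^{-1}$ and $v_2^{\pm1}=u^{\mp3}$ paired with $h$; taking $x=v_2^3h$ as in Mahowald--Rezk \cite[\S4]{MR2508904} and using $\eta=h^2$ to generate the even torsion tower, the whole cohomology is generated by $v_1^2$, $v_1v_2$, $v_2^{\pm1}$ and $x$ subject only to $2x=0$, which is the asserted presentation. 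A useful cross-check is that reduction modulo $2$ must recover the algebra $\F_4[[u_1^3]][v_1,v_2^{\pm1},h]/(v_1^3=v_2u_1^3)$ of \myref{HC6}, and that the Bockstein satisfies $\beta(v_1)=v_1h$.

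The hard part will be pinning down the multiplicative structure after taking invariants, rather than the additive ranks, which fall out immediately from the periodic resolution. Specifically, I expect the main work to be in verifying that $\eta=h^2$ is exactly the even-degree polynomial generator (so that no independent even $2$-torsion classes appear and the torsion tower is indeed generated by $x$ together with its $v_1,v_2^{\pm1}$-multiples), and in checking that every relation among $v_1^2,v_1v_2,v_2^{\pm1},x$ is forced by the relations already holding in $(E_{\cC})_*^{C_6}$ together with $2x=0$, with no hidden additive extensions in the $C_2$-cohomology.
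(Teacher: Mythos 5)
Your proposal is correct and is exactly the ``direct computation'' the paper defers to: the paper offers no argument beyond citing Mahowald--Rezk, and your reduction to $H^*(C_2;(E_{\cC})_*)^{C_3}$ via the eigenspace splitting $M=M^+\oplus M^-$ and the $2$-periodic resolution is that computation, in precise parallel with the paper's own proof of \myref{HC6} for mod $2$ coefficients. One small point worth recording: your intermediate step correctly identifies $H^0(C_6;(E_{\cC})_*)=(M^+)^{C_3}=\W[[u_1^3]][v_1^2,v_1v_2,v_2^{\pm 2}]$, and since $\phi_{-1}(v_2)=-v_2$ the class $v_2^{\pm 1}$ cannot literally appear as an invertible generator (it would force $v_1=(v_1v_2)\cdot v_2^{-1}$ into $H^0$); the presentation in the statement should read $v_2^{\pm 2}$, which is what your argument actually establishes and is all that is used later in the paper, where only even powers $v_2^{2(1+4t)}$ are lifted to integral classes.
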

As in \fullref{lem-bock}, let $\beta$ be the connecting homomorphism for the long exact sequence in cohomology associated to
\[0 \ra (E_{\cC})_*/2\to (E_{\cC})_*/4 \ra (E_{\cC})_*/2 \ra 0\] 
and note that $(E_{\cC})_*V(0) \cong (E_{\cC})_*/2$.

\begin{lem}\label{lem:d213}
All differentials $d_2:  E_2^{1,q} \rightarrow  E_{2}^{3, q-1}$ are zero.
\end{lem}

\newpage
\hvFloat[%
floatPos=H,%
capVPos=c,%
capWidth=w,
rotAngle=90,
objectPos=c]{figure}{\includegraphics[height=0.86\textwidth]{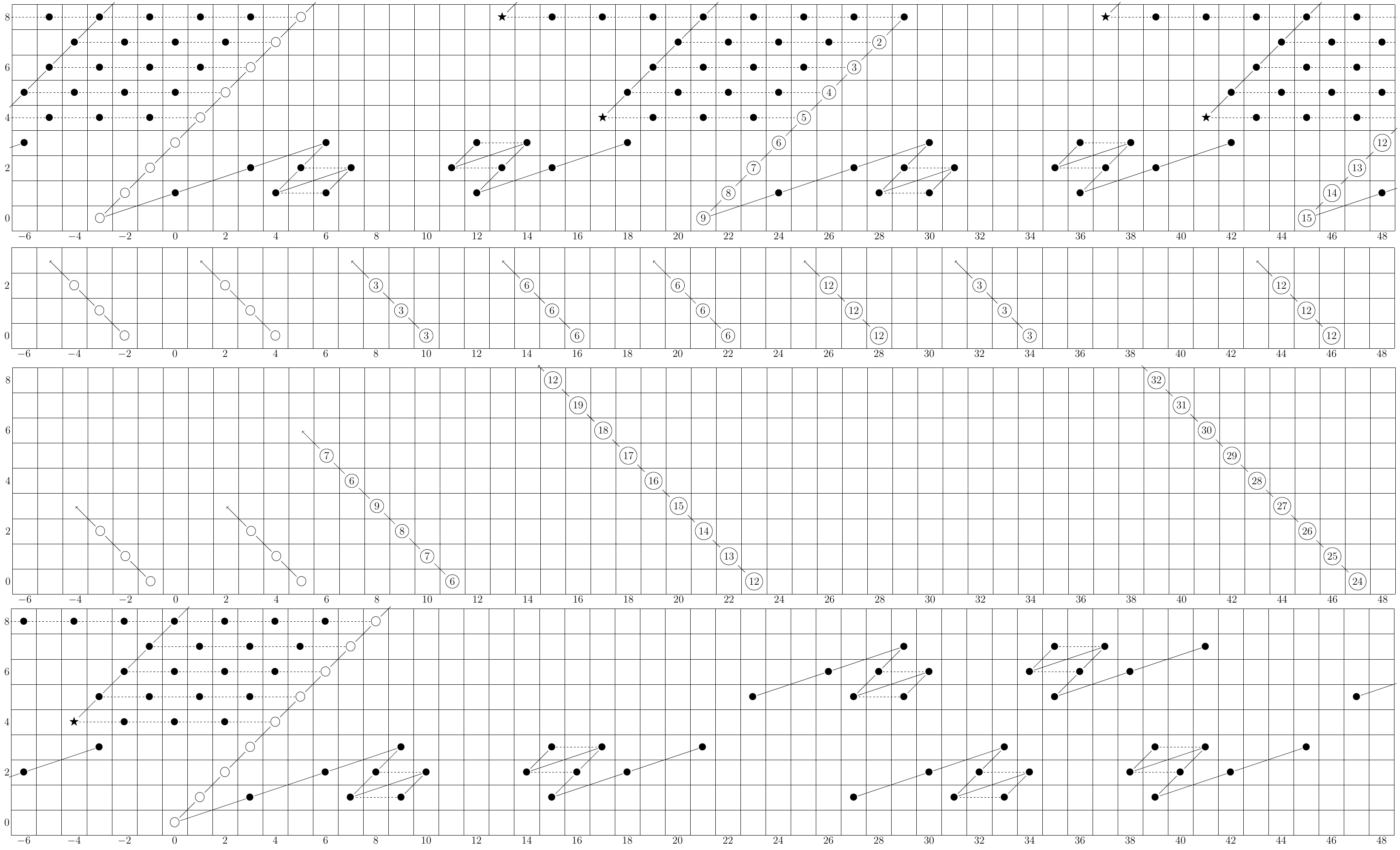}}%
{The $E_2$-term of the ADSS with coefficients $(E_{\cC})_*V(0)$. The notation and grading is as in \fullref{fig:SSRS-E1}. In addition, a $\copyright$ is a copy of $\mathbb{F}_4[v_1]/(v_1^{\text{c}})$.
In \fullref{sec:higher}, we prove that $E_2 \cong E_{\infty}$. Therefore, this is also the $E_{\infty}$-term of the ADSS.}{fig:SSRS-E2}

\begin{proof}
Let $\db_n$ be as in \fullref{partial1}. The set 
\[B = \{h^k\db_n \mid n=0,1, 2^{s+1}(1+4t), \ 0\leq k \leq 3, \ 0\leq s\}\]
generates $E_2^{1,*}$ as an $\F_4[v_1, k]$--module, for $k$ as in \fullref{g0-lin}. Because the differentials are $\F_4[v_1, k]$--linear and $k$ acts via multiplication by $h^4$, it suffices to show that the $d_2$--differentials are zero on the elements of $B$. First, note that $d_2(\db_n) =0$ for all $n$, since the targets of these differentials are zero. Hence, it suffices to show that $d_2(h^k\db_n)=0$ for $1 \leq k \leq 3$. 

The first remark is that, if $d_2(h^k\db_n) = 0$, then 
\[v_1d_2(h^{k+1}\db_n)  =d_2(v_1h^{k+1}\db_n) = d_2(\eta h^k \db_n)= \eta d_2(h^k \db_n)=0.\]
Hence, if $d_2(h^k\db_n)=0$, then $v_1d_2(h^{k+1}\db_n)=0$. Further,
\[v_1^kd_2(h^k\db_n)= d_2(\eta^k\db_n) = \eta^k d_2(\db_n).\]
Since $d_2(\db_n)=0$, we must have that $v_1^kd_2(h^k\db_n)=0$ for all $k\geq 0$.

Let $1\leq k \leq 3$. Then $d_2 (h^k\db_0)$ is an element of internal degree $t=0$ in $E_{2}^{3,k-1}$. Since $d_2(\db_0)=0$, $v_1d_2(h\db_0)=0$. However, there is no $v_1$--torsion in $(E_{2}^{3,0})_0$, hence $d_2(h\db_0)=0$. Further, $(E_{2}^{3,1})_0$ and $(E_{2}^{3,2})_0$ are zero and $d_2(h^k\db_0) =0$ for $k=2,3$.

Next, consider the elements of the form $h^k\db_1$ for $1\leq k \leq 3$. Since $d_2(h^k\db_1)$ is an element of internal degree $t=6$ in $E_2^{3,k-1}$ and there is no $v_1$--torsion in $(E_2^{3,k-1})_6$ for $1\leq k \leq 3$, these differentials must be zero. 

The classes $h^k\db_{2^{s+1}(1+4t)}$ have internal degree $3 \cdot 2^{s+2}(1+4t)$. Hence, their degree is congruent to zero modulo $3$. First, consider the case when $k=1$. The possible targets for the $d_2$ differentials on these classes are in $E_2^{3,0}$ and must be annihilated by $v_1$. Therefore, they must be of the form 
\[v_1^{3(1+2^{s'+1})-1} \od_{2^{s'}(1+2t')}.\] 
However, such classes have internal degree congruent to $1$ modulo $3$, since the degree of $\od_{2^{s'}(1+2t')}$ is $24 \cdot 2^s(1+2t')$ and the degree of $v_1$ is $2$. Hence, there is no appropriate target for these differentials. Further, this implies that $d_2(h^2\db_n)$ is annihilated by $v_1$. 

The classes which are annihilated by $v_1$ in $E_2^{3,1}$ are of one of the forms 
\[v_1^{3(1+2^{s'+1})-2} \eta \od_{2^{s'}(1+2t')},\]
$\nu \od_{2^{s'}(1+2t')}$, $v_1 x \od_{2^{s'}(1+2t')}$, or $y \od_{2^{s'}(1+2t')}$. Here, $\nu$ has internal degree $4$, $x$ has internal degree $8$ and $y$ has internal degree $16$. Again, such classes have internal degree congruent to $1$ modulo $3$, so there is no possible target for the differentials. This, in turn, implies that $d_2(h^3\db_n)$ is annihilated by $v_1$. 

The classes in $E_2^{3,2}$ which are annihilated by $v_1$ are of one of the forms 
\[v_1^{3(1+2^{s'+1})-3} \eta^2 \od_{2^{s'}(1+2t')},\]
${\nu}^2 \od_{2^{s'}(1+2t')}$,  ${v_1 \eta x} \od_{2^{s'}(1+2t')}$, $ \eta y \od_{2^{s'}(1+2t')}$ or $ {\nu}y \od_{2^{s'}(1+2t')}$. Of these classes, both ${v_1 \eta x}  \od_{2^{s'}(1+2t')}$ and $ {\eta}y \od_{2^{s'}(1+2t')}$ have internal degree congruent to $0$ modulo $3$, so we must make a more careful analysis.

Note that $3 \cdot 2^{s+2}(1+4t) \equiv 0  \mod 24$ if $s\geq 1$, and $3 \cdot 2^{2}(1+4t) \equiv 12  \mod 24$. Since the internal degree of $ {\eta}y \od_{2^{s'}(1+2t')}$ is congruent to $18$ modulo $24$, it cannot be hit by a differential. The internal degree of ${v_1 \eta x} \od_{2^{s'}(1+2t')}$ is $12$ modulo $24$. Therefore, there are possible differentials $d_2(h^3 \db_{2(1+4t)})$ with targets ${v_1 \eta x} \od_{2t}$. However, by \fullref{lem:beta0} and \fullref{lem:betanot0} below,
$\beta(d_2(h^3\db_{2(1+4t)})) = 0$ and $\beta({v_1 \eta x}\od_{2t}) \neq 0$. Therefore, 
$d_2(h^3\db_{2(1+4t)})\neq {v_1 \eta x} \od_{2t}$
and we must have $d_2(h^3\db_{2(1+4t)}) =0$.
\end{proof}

\begin{lem} \label{lem:beta0}
$\beta(d_2(h^3\db_{2(1+4t)})) = 0$
\end{lem}

\begin{proof}
The maps 
$H^0(C_6, (E_{\cC})_{12(1+4t)}) \ra H^0(C_6, (E_{\cC})_{12(1+4t)}/2)$
are surjective and factor through $H^0(C_6, (E_{\cC})_{12(1+4t)}/4)$. 
Hence,  
$\beta(\db_{2(1+4t)}  ) =0$.
Since $\beta(h^{2t +1}) = h^{2t+2}$,
it follows that
\[\beta(h^3 \db_{2(1+4t)}) = \beta(h^3) \db_{2(1+4t)} + h^3 \beta( \db_{2(1+4t)})=h^4 \db_{2(1+4t)}.\]
By \fullref{lem-bock},
\[\beta(d_2(h^3 \db_{2(1+4t)} ))= d_2(\beta(h^3 \db_{2(1+4t)})) = d_2(h^4 \db_{2(1+4t)})= {k}d_2( \db_{2(1+4t)})=0.\qedhere\]
\end{proof}

\begin{lem}  \label{lem:betanot0}
$\beta({v_1 \eta x} \od_{2t}) \equiv {\nu}^3 \od_{2t}$ modulo $(v_1^{12})$ and, hence, is non-zero.
\end{lem}
\begin{proof}
By \fullref{rem:congruencesv}, $\od_{2t} \equiv \Delta'^{2t}$ modulo $(v_1^{12})$. Since $\beta$ is a derivation and we are working in characteristic two, it is zero on squares. Hence,
\begin{align*}
\beta( {v_1 \eta x}\od_{2t}) \equiv  \beta({v_1 \eta x}) \Delta'^{2t}  \mod (v_1^{12}).
\end{align*}
So it is enough to prove that $\beta(v_1 \eta x)= \nu^3$. By definition, $\beta(v_1) = \eta$, so $\beta(\eta)=0$. Further, using the fact that $\eta^2x=\nu^3$,
\[\beta(v_1\eta x) = \beta(v_1 \eta) x+ v_1 \eta \beta(x) = \eta^2 x+ v_1 \eta \beta(x) = \nu^3 + v_1 \eta \beta(x) .\] 
However, since $v_1^2x=0$, $\beta(x)$ is $v_1$--torsion and in $H^*(G_{24}', (E_{\cC})_8V(0))$, the $v_1$--torsion is annihilated by $(v_1)$. Hence, $v_1 \eta \beta(x) =0$. 
\end{proof}

The next few results will be necessary to prove that all remaining higher differentials are zero. Let $C_2 = \{\pm 1\}$ in $\mathbb{S}_2$. For any group $G \subseteq \mathbb{S}_{\cC}$ that contains $C_2$, let $PG = G/C_2$. For any $\Z_2[\![\mathbb{S}_2]\!]$--module $M$ on which $C_2$ acts trivially, the action of $G$ descends to an action of $PG$. Further, there is a homomorphism
\[ H^*(PG, M) \to H^*(G, M)  \]
which is an isomorphism when $*=0$.

Note that $PG_{24} \cong A_4$ and $PC_6 \cong C_3$ and that the algebraic duality resolution is a resolution of $P\Sn_{\cC}^1$--modules, where $\sC_{0} \cong \Z_2[\![P\Sn_{\cC}^1/A_4]\!]$,  $\sC_1 \cong \sC_2 \cong \Z_2[\![P\Sn_{\cC}^1/C_3]\!]$ and  $\sC_3 \cong \Z_2[\![P\Sn_{\cC}^1/A_4']\!]$ for $A_4' = PG_{24}'$. Further, since $C_2$ acts trivially on $(E_{\cC})_*V(0)$, the action of $\Sn_{\cC}^1$ descends to an action of $P\Sn_{\cC}^1$. There is a corresponding ADSS
\begin{align}\label{eqn:algssPS}
F_{1}^{p,q} = \Ext_{\Z_2[\![P\Sn_{\cC}^1]\!]}^{q}(\sC_p, (E_{\cC})_*V(0)) \Longrightarrow H^{p+q}(P\Sn_{\cC}^1,(E_{\cC})_*V(0)), \end{align}
with $F_{1}^{p,q} \cong H^q(PF_p,  (E_{\cC})_*V(0))$
 and a map of spectral sequences 
 \[ \varphi: F_{r}^{p,q} \rightarrow E_{r}^{p,q}\] 
 induced by the projection from $\Sn_{\cC}^1$ to $P\Sn_{\cC}^1$.

We will relate the computation of some differentials in $E_{r}^{p,q}$ to the computation of differentials $F_{r}^{p,q}$.  The advantage of this method is that the spectral sequence $F_{r}^{p,q}$ is sparser than $E_{r}^{p,q}$. Indeed, $\sC_1$ and $\sC_2$ are projective $\Z_2[\![P\Sn_{\cC}^1]\!]$--modules. Hence, for $p=1$ or $p=2$,
\[ F_{1}^{p,q} \cong \Ext^q_{\Z_2[\![P\Sn_{\cC}^1]\!]}(\Z_2[\![P\Sn_{\cC}^1/C_3]\!], (E_{\cC})_*V(0)) \]
 is zero when $q>0$. Hence, $F_{r}^{p,q} = 0$ when $q\geq 0$ for $p=1$ or and $p=2$. Further, the induced maps $F_1^{p,0} \rightarrow E_1^{p,0}$ are isomorphisms as noted above. In fact, the complexes  $E_1^{*,0} \cong F_1^{*,0}$ are isomorphic. Therefore, the computation of $F_2^{p,q}$ follows immediately from that of $E_2^{p,q}$ and
$F_2^{p,0} \cong E_2^{p,0}$.

Let $A_4 = G_{24}/C_2$. Since $C_2$ acts trivially on $(E_{\cC})_*V(0)$, the action of $G_{24}$ descends to an action of $A_4$. 
\begin{lem}\label{prop:a4v1tor}
Let $R^{\wedge} =  \F_4[\![j]\!][v_1, \Delta^{\pm 1}]/(v_1^{12}= j\Delta)$. The inclusion
\[H^1(A_4, (E_{\cC})_*V(0))  \to H^1(G_{24}, (E_{\cC})_*V(0))\]
gives an isomorphism of $R^{\wedge}$--modules
\[H^1(A_4, (E_{\cC})_*V(0)) \cong R^{\wedge}/(v_1^2)\{x\} \oplus R^{\wedge}/(v_1)\{\nu, y\} .\]
In particular, the module $H^1(A_4, (E_{\cC})_*V(0))$ is annihilated by $v_1^2$. 
\end{lem}
\begin{proof}
Let $S_*(\rho)$ be as in \fullref{Arem:rho}.
It suffices to prove that, for $R = \F_4[v_1,\Delta]$,
\[H^1(A_4, S_*(\rho)) \cong R /(v_1^2)\{x\} \oplus R/(v_1)\{\nu, y\} .\]
Consider the spectral sequence for the group extension
\[ 1 \to C_2 \to G_{24} \to A_4 \to 1.\]
From the associated inflation-restriction exact sequence, using the fact that $S_*(\rho)^{C_2} = S_*(\rho)$, we obtain an exact sequence
\[ 0  \ra H^1(A_4, S_*(\rho)) \ra H^1(G_{24}, S_*(\rho) ) \ra H^1(C_2, S_*(\rho))^{A_4}. \]
From \fullref{mainthm}, for $R = \F_4[v_1, \Delta]$, we have 
$S_*(\rho)^{G_{24}} \cong S_*(\rho)^{A_4} \cong R$
and
\[ H^1(G_{24}, S_*(\rho) )  \cong R\{\eta\} \oplus R/(v_1^2)\{x\} \oplus R/(v_1)\{\nu, y\} .  \]
Further, $H^1(C_2, S_*(\rho))^{A_4} \cong R\{h\}$
for $h$ of internal degree $0$. Since $R\{h\}$ is $v_1$--free, $R/(v_1^2)\{x\} \oplus R/(v_1)\{\nu, y\}$ maps to zero and
the map $H^1(G_{24}, S_*(\rho) ) \ra H^1(C_2, S_*(\rho))^{A_4}$ sends $\eta$ to $v_1h$. 
\end{proof}

\begin{prop}\label{A4-G24}
The map 
\[\phi : H^*(A_4, (E_{\cC})_*V(0)) \rightarrow H^*(G_{24}, (E_{\cC})_*V(0))/({\eta})\] 
induced by the projection $G_{24} \ra G_{24}/C_2 \cong A_4$ is an isomorphism in degree $\ast = 0$ and is surjective in degrees $\ast \leq 3$.
\end{prop}
\begin{proof}
It suffices to prove that $\phi$ is surjective if we replace $(E_{\cC})_*V(0)$ by $S_*(\rho)$. It follows from \fullref{mainthm} that $H^*(G_{24}, S_*(\rho))/({\eta})$ is generated by the fixed points $H^0(G_{24}, S_*(\rho))$ and the elements $\nu$, $x$ and $y$ in degrees $0 \leq * \leq 3$. Since $C_2$ has a trivial action, $S_*(\rho)^{A_4} \cong S_*(\rho)^{G_{24}}$. Hence, $\phi$ is an isomorphism in degree zero. By \fullref{prop:a4v1tor}, $\nu$, $x$ and $y$ are in the image of $\phi$. The result follows from the fact that $\phi$ is a ring homomorphism.
\end{proof}

\begin{rem}\label{rem:relFE}
It follows from \fullref{A4-G24} that the map 
\[\varphi : F_1^{0,*} \rightarrow E_1^{0,*}/({\eta})\] 
induced by the projection $G_{24} \ra G_{24}/C_2 \cong A_4$ is surjective in degrees $\ast \leq 3$.
All classes of degree $q\geq 4$ in $E_r^{0,q}$ are multiples of ${k}$, so their differentials will be determined by differentials on classes of degree $q\leq 3$. Further, by ${\eta}$--linearity it suffices to show that the differentials on the classes in the image of $\varphi$ of \fullref{A4-G24} are zero. It is therefore sufficient to compute some of the differentials  $d_r : F_r^{0,q} \rightarrow F_{r}^{r, q-r+1}$ for $q\leq 3$.
 \end{rem}

The following results are generalizations of results that can be found in Henn, Karamanov and Mahowald \cite[Section 6]{HKM}. The first is \cite[Lemma 6.1]{HKM}.
\begin{lem}[Henn-Karamanov-Mahowald]\label{mapj}
Let $R$ be a $\Z_2$-algebra and $M$ be an $R$--module. Let
\[0 \rightarrow \sC_3 \xra{\partial_3} \sC_2 \xra{\partial_3} \sC_1 \xra{\partial_2} \sC_0 \xra{\partial_1} \Z_2 \rightarrow 0\]
be an exact sequence of $R$--modules such that $\sC_1$ and $\sC_2$ are projective. Define $N_i$ recursively by $0 \rightarrow N_i \rightarrow \sC_i \xra{\partial_i} N_{i-1} \rightarrow 0$, and let 
 $F_r^{s,t}$ be the first quadrant spectral sequence of the exact couple 
 \[ \xymatrix{\Ext_R(N_i, M) \ar@{.>}[rr]  & &  \Ext_R(N_{i-1}, M) \ar[dl] \\ &  \Ext_R(\sC_i, M)  \ar[ul] }\]
 Then $E_1^{p,q} = 0$ for $0<p<3$ and $q>0$. Further, there are isomorphisms
\[ \Ext_{R}^q(N_0, M) \cong \begin{cases} \ker(F_{1}^{1,0} \rightarrow F_{1}^{2,0}  ) &q=0 \\
  F_2^{q+1,0} \cong F_3^{q+1,0}  &q=1,2 \\
  F_3^{q-2,0} &q\geq 3. 
 \end{cases}\]
Let $j : N_0 \rightarrow \sC_0$ be the inclusion. The only possible non-zero higher differentials are of the form $d_r : F_{r}^{0, q} \rightarrow  F_{r}^{r, q-r+1}$, and they can be identified with the map $\Ext_{R}^q(\sC_0, M ) \rightarrow \Ext_{R}^q(N_0, M )$ induced by $j$.
\end{lem}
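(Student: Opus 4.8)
The plan is to establish the three assertions separately, in increasing order of difficulty. The vanishing $E_1^{p,q}=\Ext^q_R(\sC_p,M)=0$ for $p=1,2$ and $q>0$ is immediate from the hypothesis that $\sC_1$ and $\sC_2$ are projective $R$-modules. Granting this, the spectral sequence is extremely sparse from $E_2$ onward: off the bottom row $q=0$ the only possibly nonzero groups sit in columns $p=0$ and $p=3$. A bookkeeping of bidegrees then shows that for $r\geq 2$ a differential $d_r\colon E_r^{p,q}\to E_r^{p+r,q-r+1}$ has either zero source or zero target unless $p=0$, and that the only column-zero differentials with a chance of being nonzero are $d_2\colon E_2^{0,1}\to E_2^{2,0}$ and $d_3\colon E_3^{0,q}\to E_3^{3,q-2}$ for $q\geq 2$ (for $q\geq 2$ the $d_2$ out of column zero already vanishes since its target $E_2^{2,q-1}$ is zero). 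This is exactly the asserted restriction on the higher differentials.

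The computation of $\Ext^*_R(N_0,M)$ I would carry out by dimension shifting. The splice $0\to\sC_3\to\sC_2\to\sC_1\to N_0\to 0$ is exact, hence a partial resolution of $N_0$ whose first two terms $\sC_1,\sC_2$ are projective. Applying $\Hom_R(-,M)$ and using this projectivity gives $\Ext^0_R(N_0,M)=\ker\big(E_1^{1,0}\to E_1^{2,0}\big)$ and $\Ext^1_R(N_0,M)\cong E_2^{2,0}$ directly. For the higher groups I use the defining short exact sequences $0\to N_p\to\sC_p\to N_{p-1}\to 0$: since $\sC_1$ and $\sC_2$ are projective, their long exact $\Ext$-sequences collapse to connecting isomorphisms $\Ext^q_R(N_0,M)\cong\Ext^{q-1}_R(N_1,M)\cong\Ext^{q-2}_R(N_2,M)$ for $q\geq 2$, with $N_2=\sC_3$. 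Thus $\Ext^2_R(N_0,M)\cong E_2^{3,0}\cong E_3^{3,0}$ and $\Ext^q_R(N_0,M)\cong\Ext^{q-2}_R(\sC_3,M)\cong E_3^{3,q-2}$ for $q\geq 3$; the column-three and bottom-row pages involved are stable precisely because columns $1$ and $2$ vanish off the bottom row, which is what the displayed isomorphisms record.

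For the identification of the differentials I would use the short exact sequence $0\to N_0\xra{j}\sC_0\xra{\varepsilon}\Z_2\to 0$ and its long exact $\Ext$-sequence. The abutment of the spectral sequence is $\Ext^*_R(\Z_2,M)$, and the edge homomorphism $\Ext^q_R(\Z_2,M)\to E_{\infty}^{0,q}\hookrightarrow E_1^{0,q}=\Ext^q_R(\sC_0,M)$ is precisely $\varepsilon^*$, with image the permanent cycles $E_{\infty}^{0,q}=\bigcap_{r\geq 2}\ker d_r$. Exactness gives $\ker(j^*)=\operatorname{im}(\varepsilon^*)=E_{\infty}^{0,q}$, so a class in column zero is a permanent cycle if and only if it is killed by $j^*$. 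Matching the target $\Ext^q_R(N_0,M)$ with the page computed above — namely $E_2^{2,0}$ receiving $d_2$ when $q=1$, and $E_3^{3,q-2}$ receiving $d_3$ when $q\geq 2$ — identifies the successive higher differentials off column zero with the single map $j^*$, as claimed.

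The main obstacle is this last matching: one must verify not merely that the higher differentials and $j^*$ share the same source and the same reindexed target, but that they literally agree. The clean way to see this is to recognize the spectral sequence's extended exact sequence of low-degree terms, which by the sparsity established above is a genuine long exact sequence, as the $\Ext$-long exact sequence of $0\to N_0\to\sC_0\to\Z_2\to 0$; under this identification the connecting maps are $d_2$ and $d_3$ and they become the components of $j^*$. Setting this up rigorously — for instance by comparing the given spectral sequence, via the map of resolutions induced by $j$ and $\varepsilon$, with the elementary one attached to $\sC_1\leftarrow\sC_2\leftarrow\sC_3$ — is where the care lies; once the naturality is in place, the remaining verifications are routine homological algebra, and the $\F_4[v_1,h_1]$- and $g_0$-linearity available in the main computation then let one propagate the conclusion to all the classes that arise.
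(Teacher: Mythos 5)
The paper offers no proof of this lemma --- it is quoted from \cite[Lemma~6.1]{HKM} --- so there is no in-house argument to measure yours against, and I judge the proposal on its own. Its architecture is sound and most of it is complete: projectivity of $\sC_1$ and $\sC_2$ gives the vanishing of $E_1^{p,q}$ for $p=1,2$ and $q>0$; the bidegree count correctly isolates $d_2\colon E_2^{0,1}\to E_2^{2,0}$ and $d_3\colon E_3^{0,q}\to E_3^{3,q-2}$ (for $q\ge 2$) as the only candidates for nonzero higher differentials; and the dimension shifts along $0\to N_1\to \sC_1\to N_0\to 0$ and $0\to \sC_3\to\sC_2\to N_1\to 0$ yield the displayed computation of $\Ext^*_R(N_0,M)$. (You silently, and correctly, read the last case as $E_3^{3,q-2}$; the printed $E_3^{q-2,0}$ is a transcription slip. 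Likewise the claim $E_2^{2,0}\cong E_3^{2,0}$ in the $q=1$ case is not induced by the natural quotient map unless $j^*$ vanishes in degree $1$, so your decision to establish only $\Ext^1_R(N_0,M)\cong E_2^{2,0}$ there is the right one.)

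The one soft spot is the one you flag yourself: the edge-homomorphism argument via the long exact sequence of $0\to N_0\xra{j}\sC_0\to\Z_2\to 0$ only shows that $\ker d_r=\ker j^*$, and the actual equality $d_r=j^*$ is left as a plan (``setting this up rigorously \dots\ is where the care lies''). Your proposed comparison of resolutions would work, but there is a shorter way to close this that needs no auxiliary spectral sequence: use the explicit formula for the differentials of the derived couples. For $x\in\Ext^q_R(\sC_0,M)$ one has $d_r[x]=[r(w)]$, where $w$ is any solution of $\delta^{r-1}(w)=j^*(x)$, with $\delta$ the connecting homomorphisms of the sequences $0\to N_p\to\sC_p\to N_{p-1}\to 0$ and $r$ the restriction along $\sC_p\twoheadrightarrow N_{p-1}$. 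Because $\sC_1$ and $\sC_2$ are projective, these same maps $\delta$ and $r$ are exactly what furnish your isomorphisms $\Ext^1_R(N_0,M)\cong E_2^{2,0}$ and $\Ext^q_R(N_0,M)\cong E_3^{3,q-2}$; written out, the two sides of the asserted identification are literally the same composite, and $d_r=j^*$ is tautological. With that substitution your argument is complete. (The closing remark about $\F_4[v_1,h_1]$- and $g_0$-linearity belongs to the applications of the lemma, not to its proof.)
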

\noindent
Note that the algebraic duality resolution viewed as a resolution of the trivial $\Z_2[\![P\Sn_{\cC}^1]\!]$--module $\Z_2$ and the associated spectral sequence (\ref{eqn:algssPS}) satisfy the conditions of \fullref{mapj}.

Let $P_{-1}= \Z_2[\![P\Sn_{\cC}^1/A_4]\!] $ and $P_0 = \Z_2[\![P\Sn_{\cC}^1/C_3]\!]$. Let $P_0 \xra{\varepsilon} P_{-1}$ be the natural augmentation which sends the coset $[C_3]$ to $[A_4]$. Complete this to a projective resolution $P_*$ of $ \Z_2[\![P\Sn_{\cC}^1/A_4]\!]$. Let $P_*'$ be any projective resolution of  $\Z_2[\![P\Sn_{\cC}^1/A_4']\!]$. Let $N_0$ be defined by the exact sequence 
\[0 \rightarrow N_0 \rightarrow \sC_0 \xra{\varepsilon} \Z_2 \rightarrow 0.\] 
Letting $Q_{-1} = N_0$, $Q_q = \sC_{q+1} = \Z_2[\![P\Sn_{\cC}^1/C_3]\!]$ for $q = 0,1$, the complex 
\begin{align}\label{eqn:Qis}
Q_{2} \ra Q_{1} \ra Q_0 \ra Q_{-1},
\end{align}
with maps as in the algebraic duality resolution, is the beginning of a projective resolution of $N_0$. The kernel of $Q_{2} \ra Q_{1}$ is isomorphic to $\Z_2[\![P\Sn_{\cC}^1/A_4']\!]$. Splicing (\ref{eqn:Qis}) with $Q_{*}=P_{*-3}'$ gives a projective resolution $Q_*$ of $N_0$. 
\begin{lem}
There is a map $\phi: Q_{*} \rightarrow P_{*}$ such that
\[ \phi_0: Q_0 \rightarrow P_0\]
covers the map $j: N_0 \rightarrow \sC_0 = P_{-1}$ which sends $e_1 \rightarrow (e-\alpha)e_0$.
\end{lem}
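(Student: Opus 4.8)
The plan is to recognize the statement as an instance of the comparison theorem for projective resolutions. By construction $Q_* \to N_0$ and $P_* \to \sC_0$ are projective resolutions of $\Z_2[[P\Sn_{\cC}^1]]$-modules, and $j : N_0 \to \sC_0$ is the inclusion $N_0 = \ker \varepsilon \hookrightarrow \sC_0$. Since $Q_*$ is a complex of projectives and $P_*$ is exact in positive degrees, any map of the modules being resolved lifts to a chain map (unique up to chain homotopy); so the only substantive task is to pin down $\phi_0$ explicitly, after which the higher $\phi_n$ follow from the standard inductive construction.

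For $\phi_0$, recall that the splicing gives $Q_0 = \sC_1 \cong \Z_2[[P\Sn_{\cC}^1/C_3]]$, with augmentation onto $N_0 = \mathrm{im}\,\partial_1$ equal to the corestriction of $\partial_1$; in particular the canonical generator $e_1$ is sent to $(e-\alpha)e_0$. Composing with $j$, the map $Q_0 \to \sC_0$ to be covered is $\partial_1$ itself, $e_1 \mapsto (e-\alpha)e_0$. Since $C_3$ has order prime to $2$, $\sC_1$ is projective, so by surjectivity of the augmentation $P_0 \twoheadrightarrow \sC_0$ this map lifts to a $\Z_2[[P\Sn_{\cC}^1]]$-module map $\phi_0 : Q_0 \to P_0$. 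Explicitly, writing $c_{0,0}$ for the canonical generator of $P_0$ (so that $c_{0,0} \mapsto e_0$), it suffices to take $\phi_0(e_1)$ to be a $C_3$-fixed element of $P_0$ lying over $(e-\alpha)e_0$, for instance $\phi_0(e_1) = \tfrac{1}{3}(e + \omega + \omega^2)(e-\alpha)c_{0,0}$.

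Granting $\phi_0$, I would extend degreewise by the usual argument: assuming $\phi_{n-1}$ is built, the composite $\phi_{n-1}\partial^Q_n$ lands in $\ker \partial^P_{n-1} = \mathrm{im}\,\partial^P_n$ (one checks $\partial^P_{n-1}\phi_{n-1}\partial^Q_n = \phi_{n-2}\partial^Q_{n-1}\partial^Q_n = 0$, with the $n=1$ case using $\varepsilon_P \phi_0 = j \varepsilon_Q$ together with $\varepsilon_Q \partial^Q_1 = 0$), and projectivity of $Q_n$ produces a lift $\phi_n$ through $P_n \twoheadrightarrow \mathrm{im}\,\partial^P_n$. The single point requiring care — the ``hard part,'' such as it is — is the well-definedness of $\phi_0$: the generator $e_1$ is $C_3$-fixed in $\sC_1$, so $\phi_0(e_1)$ must be $C_3$-fixed as well, and one must verify that $(e-\alpha)e_0$ admits a $C_3$-fixed lift to $P_0$. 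This is exactly where the averaging idempotent $\tfrac{1}{3}(e+\omega+\omega^2)$ is used, which is legitimate because $|C_3|$ is prime to $2$ (the same fact that makes $\sC_1$ projective); note that $(e-\alpha)e_0$ is itself $C_3$-invariant, since $\alpha$ commutes with $\omega$ and $\omega e_0 = e_0$, so its average maps correctly under the augmentation. Everything in positive degrees is then formal.
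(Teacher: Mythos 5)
Your proof is correct and takes essentially the same route as the paper: identify $Q_0 \cong P_0 \cong \Z_2[[P\Sn_{\cC}^1/C_3]]$, send the generator $e_1$ to $(e-\alpha)c_{0,0}$ — which is well defined exactly because $\alpha$ commutes with $\omega$, so that this element is already $C_3$-fixed and your averaging idempotent acts on it as the identity — and then extend to a chain map in positive degrees by the standard comparison theorem (the paper phrases this as ``the theory of acyclic models'').
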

\begin{proof}
Note that $Q_0 \cong P_0 \cong \Z_2[\![P\Sn_{\cC}^1/C_3]\!]$. So the map which sends the generator of $e \ox 1 \in Q_0$ to $(e-\alpha) \ox 1 \in P_0$ is well defined and covers $j$. Hence, it extends to a chain map $\phi$.
\end{proof}
The following is an observation in Henn, Karamanov and Mahowald \cite[Section 6]{HKM}. It follows from \fullref{mapj}.
\begin{lem}\label{lem:Tstarstar}
Let $T_{*,*}$ be the double complex satisfying $T_{*, 0} = P_{*}$ and $T_{*, 1} = Q_{*}$ with vertical differentials $\delta_P$ and $\delta_Q$ and horizontal differentials $\phi_s : Q_s \rightarrow P_s$. Up to reindexing, the filtration of the spectral sequence of this double complex agrees with that of the ADSS.
\end{lem}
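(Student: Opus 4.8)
The plan is to exhibit both spectral sequences as arising from one filtered complex. First I would observe that the total complex $\mathrm{Tot}(T)$ is the mapping cone of $\phi\colon Q_*\to P_*$, so $\mathrm{Tot}(T)_n = P_n\oplus Q_{n-1}$ with differential assembled from $\delta_P$, $\delta_Q$ and $\phi$. Since $\phi$ covers the inclusion $j\colon N_0\hookrightarrow \sC_0$ and both $P_*,Q_*$ are projective resolutions (of $\sC_0$ and $N_0$), the short exact sequence of complexes $0\to P_*\to \mathrm{Tot}(T)\to Q_*[-1]\to 0$ has connecting map induced by $\phi$, hence equal to $j$. Therefore $H_*(\mathrm{Tot}(T)) = \mathrm{coker}(j) = \Z_2$, concentrated in degree $0$, so $\mathrm{Tot}(T)$ is a projective resolution of $\Z_2$ over $\Z_2[[P\Sn_{\cC}^1]]$ and the double-complex spectral sequence of $\Hom(T,M)$ converges to $\Ext^*_{\Z_2[[P\Sn_{\cC}^1]]}(\Z_2,M) = H^*(P\Sn_{\cC}^1,M)$, the same abutment as the algebraic duality resolution spectral sequence.

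The reindexing is forced by the way $Q_*$ was assembled: splicing the projective resolution $P_*'$ of $\sC_3$ onto the truncated resolution $\sC_3\to\sC_2\to\sC_1\to N_0$ gives $Q_0=\sC_1$, $Q_1=\sC_2$ and $Q_k = P'_{k-2}$ for $k\geq 2$. I would assign weights to the summands of $\mathrm{Tot}(T)$: weight $0$ to every $P_k$, weight $1$ to $Q_0$, weight $2$ to $Q_1$, and weight $3$ to $Q_k$ for $k\geq 2$; then define the decreasing filtration $F^p$ of $\Hom(\mathrm{Tot}(T),M)$ by weight $\geq p$. The point to verify is that the differential of $\Hom(\mathrm{Tot}(T),M)$ raises weight by $0$ or $1$: the dual resolution differentials preserve weight except across the splice $Q_2=P'_0\to Q_1=\sC_2$ (weight $2\to 3$ on $\Hom$), and $\phi_0^*\colon\Hom(P_0,M)\to\Hom(Q_0,M)$ jumps weight $0\to 1$. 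Hence $F^\bullet$ is respected and yields a spectral sequence.

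Next I would compute its $E_1$-page. The weight-$0$ associated graded is $\Hom(P_*,M)$, computing $\Ext^q(\sC_0,M)$ in column $p=0$; the weight-$1$ and weight-$2$ pieces are the single terms $\Hom(\sC_1,M)$ and $\Hom(\sC_2,M)$, giving $\Ext^0(\sC_p,M)$ in columns $p=1,2$ (and $0$ for $q>0$, since $\sC_1,\sC_2$ are projective); and the weight-$3$ piece is $\Hom(P'_*,M)$, computing $\Ext^q(\sC_3,M)$ in column $p=3$. Thus $E_1^{p,q}\cong\Ext^q(\sC_p,M)$, matching the $E_1$-page of the algebraic duality resolution spectral sequence. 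The $d_1$ differential is the weight-raising-by-one part of the differential: from $p=0$ to $p=1$ it is $\phi_0^*$, dual to $\partial_1$ because $\phi_0$ lifts $\partial_1(e_1)=(e-\alpha)e_0$; from $p=1$ to $p=2$ it is dual to $\partial_2$; and from $p=2$ to $p=3$ it is dual to $\partial_3$ via the splice $P'_0\to\sC_2$. These are exactly the $d_1$ differentials of the algebraic duality resolution spectral sequence.

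Having matched $E_1$ and $d_1$, I would conclude by noting that $\mathrm{Tot}(T)$ with the weight filtration is a filtered projective resolution of $\Z_2$ whose associated graded resolves $\sC_p$ in filtration degree $p$; this is a filtered model for the very complex defining the algebraic duality resolution spectral sequence, so the comparison theorem for filtered complexes identifies the two spectral sequences from $E_1$ onward under the reindexing $p\leftrightarrow \text{weight}$. The main obstacle is precisely this reindexing bookkeeping: one must check that the single differential $\delta_Q$ supplies $\partial_2$ and $\partial_3$ across two distinct filtration jumps while $\phi_0$ supplies $\partial_1$, that no spurious weight changes occur, and — using \myref{mapj} — that the higher differentials $d_r$ ($r\geq 2$), which jump out of column $0$ into columns $\geq 2$, are exactly those identified with $j^*\colon\Ext^q(\sC_0,M)\to\Ext^q(N_0,M)$ computed through $\phi$.
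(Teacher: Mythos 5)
You are supplying a genuine argument for a statement the paper disposes of in one line (it is called ``an observation in \cite[\S 6]{HKM}'' that ``follows from \myref{mapj}''), so your route is necessarily more explicit than the paper's. The skeleton is correct and is essentially what that observation amounts to: $\mathrm{Tot}(T)$ is the mapping cone of $\phi$, hence a projective resolution of $\mathrm{coker}(j)=\Z_2$ because $\phi$ lifts the injection $j$; the splice structure of $Q_*$ (namely $Q_0=\sC_1$, $Q_1=\sC_2$, $Q_k=P'_{k-2}$) dictates the reindexing; and the resulting weight filtration has associated graded $\Ext^q(\sC_p,M)$ with $d_1$ dual to $\partial_{p+1}$, using that $\sC_1$ and $\sC_2$ are projective over $\Z_2[[P\Sn_{\cC}^1]]$. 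Where the paper uses \myref{mapj} to collapse the ADRSS onto the two columns $\Ext^*(\sC_0,M)$ and $\Ext^*(N_0,M)$ with $d_1=j^*$ --- which is literally the two-column spectral sequence of $T_{*,*}$ --- you instead refine the two-column filtration to a four-step one; both identify the same filtration on the abutment.

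Two things need repair. First, your claim that the differential of $\Hom(\mathrm{Tot}(T),M)$ raises weight by $0$ or $1$ is false: dualizing the horizontal differential produces components $\phi_n^*:\Hom(P_n,M)\ra\Hom(Q_n,M)$ for \emph{every} $n$, and with your weights $\phi_1^*$ raises weight from $0$ to $2$ while $\phi_n^*$ for $n\geq 2$ raises it from $0$ to $3$. This does not break the construction --- a filtered complex only requires that the differential not decrease filtration, and your $E_1$ and $d_1$ identifications use only the weight-preserving and weight-$(+1)$ parts --- but it must be corrected, all the more because the omitted components are precisely what produce the differentials $d_2$ and $d_3$ out of the zeroth column (this is the computation of \myref{hkm-65}, where $\phi^*_{s-1}$ and $\phi^*_{s}$ are used to evaluate $j^*$); accessing those differentials is the entire point of introducing $T_{*,*}$. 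Second, an abstract isomorphism of $E_1$-pages commuting with $d_1$ does not identify two spectral sequences, so the closing appeal to ``the comparison theorem for filtered complexes'' needs an actual map. The cleanest fix is to note that $\mathrm{Tot}(T)$ with your weight filtration is a Cartan--Eilenberg-type resolution of the complex $\sC_3\ra\sC_2\ra\sC_1\ra\sC_0$ (taking the projective modules $\sC_1,\sC_2$ as their own resolutions), and that the ADRSS, defined from the exact couple of the splicings $0\ra N_p\ra \sC_p\ra N_{p-1}\ra 0$, coincides with the hypercohomology spectral sequence of any such resolution; alternatively one can compare the filtrations on the abutment directly via \myref{mapj}, which is the paper's intended shortcut.
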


The following result is an adaptation of part of \cite[Lemma 6.5]{HKM}. 
\begin{lem}\label{hkm-65}
Let $s>0$. Let $z \in H^s(A_{4},(E_{\cC})_*V(0))$ be such that
$v_1^r z = 0$.
Let $c \in \Hom_{\Z_2[\![P\Sn_{\cC}^1]\!]}(P_{s}, (E_{\cC})_*V(0))$ be a cocyle which represents $z$. Choose an element $h$ in $\Hom_{\Z_2[\![P\Sn_{\cC}^1]\!]}(P_{s-1}, (E_{\cC})_*V(0)) $ such that
$\delta_P(h) = v_1^r c$.
Let 
\[ \phi^* :  \Hom_{\Z_2[\![P\Sn_{\cC}^1]\!]}(P_{*}, (E_{\cC})_*V(0)) \rightarrow \Hom_{\Z_2[\![P\Sn_{\cC}^1]\!]}(Q_{*}, (E_{\cC})_*V(0)) \]
be induced by $\phi$.
There are elements $d$ and $d'$ in $\Hom_{\Z_2[\![P\Sn_{\cC}^1]\!]}(Q_{s-1}, (E_{\cC})_*V(0)) $ and an element $d''$ in $\Hom_{\Z_2[\![P\Sn_{\cC}^1]\!]}(Q_{s}, (E_{\cC})_*V(0))$ such that
\begin{align}\label{phib} \phi^*_{s-1}(h) = d' + v_1^rd \end{align}
and $\delta_Q(d') = v_1^r d''$. For $d''$ as above and $j$ as in \fullref{mapj},
\[ j^*(z) = [d''] \in \Ext^s_{\Z_2[\![P\Sn_{\cC}^1]\!]}(N_0,(E_{\cC})_*V(0)).\]
\end{lem}

\begin{proof}
Note that $\Hom_{\Z_2[\![P\Sn_{\cC}^1]\!]}(T_{*,*}, (E_{\cC})_*V(0))$ for $T_{*,*}$ as in \fullref{lem:Tstarstar} is a double complex of $\F_4[v_1]$--modules which have no $(v_1)$--torsion. 
We can write
\[ \phi^*_{s-1}(h) = d' + v_1^rd. \] 
To prove (\ref{phib}), note that by $v_1$--linearity,
\begin{align*} 
\delta_Q(d') + v_1^r\delta_Q(d) 
=  \delta_Q(\phi^*_{s-1}(h) )
= \phi^*_{s}(\delta_P(h ))
=v_1^r \phi^*_{s}( c).
\end{align*}
Hence,  $\delta_Q(d') \equiv 0$ modulo $(v_1^r)$, that is, $\delta_Q(d') = v_1^r d''$
for some $d''$.

Now, note that
\begin{align*} 
v_1^rj^*(c) 
=\phi_s^*( v_1^r c) 
=\phi_s^*( \delta_P(h)) 
=  \delta_Q(\phi^*_{s-1}(h) )
=  v_1^r d'' + v_1^r\delta_Q(d).
\end{align*}
Since there is no $v_1$--torsion in $\Hom_{\Z_2[\![P\Sn_{\cC}^1]\!]}(T_{*,*}, (E_{\cC})_*V(0)) $, we must have
$j^*(c) =  d'' + \delta_Q(d)$. 
This reduces to
\[ j^*(z) = [d''] \in \Ext^s_{\Z_2[\![P\Sn_{\cC}^1]\!]}(N_0,(E_{\cC})_*V(0)).\qedhere\]
\end{proof}

\begin{lem}\label{d20}
Let $z$ be in $F_2^{0, q}$. Then $d_2(z) =0$.
\end{lem}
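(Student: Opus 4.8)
The plan is to prove \myref{d20} by combining the $v_1$-torsion structure of the target groups with the comparison map $\varphi$ from the $A_4$-spectral sequence $F_r^{p,q}$, using \myref{hkm-65} to identify the differential with a restriction map. The differential in question is $d_2 : F_2^{0,q} \to F_2^{2,q-1}$. But recall from the discussion preceding \myref{mapj} that $F_1^{p,q} = 0$ for $p=1,2$ and $q>0$, since $\sC_1$ and $\sC_2$ are projective $P\Sn_{\cC}^1$-modules. Hence $F_2^{2,q-1} = 0$ whenever $q-1 > 0$, i.e. whenever $q \geq 2$. The only remaining case is $q=1$, where the target $F_2^{2,0}$ is nonzero. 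So the substance of the proof is to handle $d_2 : F_2^{0,1} \to F_2^{2,0}$, and to combine this with the target-vanishing to conclude $d_2 \equiv 0$ on all of $F_2^{0,q}$.

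First I would dispose of the cases $q \geq 2$ immediately by the vanishing $F_2^{2,q-1}=0$ just noted. Then I would treat $q=1$: by \myref{A4-G24} the map $\varphi$ is surjective onto $H^*(G_{24},(E_{\cC})_*V(0))/(h_1)$ in degrees $\leq 3$, and by $h_1$-linearity of the differentials (\myref{lem-bock} and \myref{v1linear}) together with the fact that $d_2$ vanishes on the image of multiplication by $h_1$, it suffices to check $d_2$ on the lifts to $H^1(A_4,(E_{\cC})_*V(0))$ of the generating classes $h_2, x, v_1x$ and their $\Delta$-translates listed in \myref{A4-G24}. I would use \myref{mapj}, which identifies the higher differential $d_r : F_r^{0,q} \to F_r^{r,q-r+1}$ with the map $\Ext^q(\sC_0,M) \to \Ext^q(N_0,M)$ induced by the inclusion $j : N_0 \to \sC_0$. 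Concretely, via \myref{hkm-65}, for each generating class $z \in H^1(A_4,M)$ of internal degree $2t$ with $v_1^k z = 0$, one computes $j^*(z) = [d''] \in \Ext^1(N_0,M)$ by lifting a cocycle representing $v_1^k z$ across $\phi^*$ and extracting the residual term $d''$.

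The key leverage is the $v_1$-torsion: the classes $h_2, x$ (and so on) are all $v_1$-power-torsion in $H^*(G_{24},(E_{\cC})_*V(0))$ by the relations of \myref{TBG24} (e.g. $v_1 h_2 = 0$, $v_1^2 x = 0$, $v_1 y = 0$), so their lifts satisfy $v_1^k z = 0$ for small $k$. Meanwhile, the target $F_2^{2,0} \cong E_2^{2,0}$ has no low-degree $v_1$-torsion that could receive such a class; more precisely, any nonzero image $d_2(z)$ would be a $v_1$-torsion class in $E_2^{2,0}$ of the appropriate internal degree, and one checks by the explicit description of $E_2^{2,0}$ (built from the $c_n$ of \myref{Tp2} and \myref{v13-div}, where every differential image is divisible by $v_1^3$) that there are no such targets. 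I would run this degree-and-divisibility bookkeeping for each generator, comparing the internal degree $2t$ of $z$ against the degrees forced on torsion elements of $E_2^{2,0}$, exactly in the spirit of the preceding lemma on $d_2 : E_2^{1,q} \to E_2^{3,q-1}$.

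The main obstacle I anticipate is the bookkeeping in the $q=1$ case: one must track the internal degrees of the finitely many generating classes of $H^1(A_4,M)$ (up to $\Delta$-periodicity and $g_0$-linearity) and verify in each case that \myref{hkm-65} produces $d''=0$, or equivalently that no nonzero target exists in $E_2^{2,0}$ of the matching degree and $v_1$-torsion order. The $\Delta$-periodicity and the $g_0$-action (\myref{g0-lin}) reduce this to finitely many checks, and $h_1$-linearity (\myref{A4-G24}'s reduction modulo $h_1$) removes the classes in the image of $h_1$; what remains is a careful matching of the torsion orders $v_1^k z = 0$ dictated by \myref{TBG24} against the $v_1^3$-divisibility of all differential images guaranteed by \myref{v13-div}, which together force $d''=0$.
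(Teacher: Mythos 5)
Your handling of $q\geq 2$ (vanishing target) and your choice of framework (\myref{mapj} and \myref{hkm-65}, identifying $d_2$ with $j^*$) match the paper. But the mechanism you propose for the remaining case $q=1$ --- ruling out nonzero values of $d_2$ by matching internal degrees and $v_1$-torsion orders against the structure of $E_2^{2,0}$ --- does not work, and it is not the paper's argument. The target $F_2^{2,0}\cong E_2^{2,0}$ \emph{does} contain nonzero classes of $v_1$-torsion order $\leq 2$ in exactly the degrees you would need to exclude: since $v_1^{3\cdot 2^{k+1}}c_{1+2^{k+1}+t2^{k+3}}$ is the image of $d_1(b_{1+2^{k+2}+t2^{k+3}})$ and $c_{1+2^{k+1}+t2^{k+3}}$ is a $d_1$-cycle (its index is not of the form $1+2^{k'+1}(3+4t')$), the element $v_1^{3\cdot 2^{k+1}-1}c_{1+2^{k+1}+t2^{k+3}}$ survives to $E_2^{2,0}$, is annihilated by $v_1$, and sits in internal degree $4+24\cdot 2^{k}(1+2t)$ --- precisely the degree of $h_2\Delta^{2^{k}(1+2t)}$, which is also annihilated by $v_1$. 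So the ``degree-and-divisibility bookkeeping'' you outline would stall here rather than conclude. (Your appeal to \myref{v13-div} is also misplaced: that lemma controls the differential \emph{out of} the $2$-line into the $3$-line, not the torsion structure of $E_2^{2,0}$ as a receptacle.)

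The missing idea is a source-side, cochain-level argument. The chain map $\phi_0$ covering $j:N_0\ra \sC_0$ sends the generator to $(e-\alpha)e_0$, so $\phi_0^*(h)=(\mathrm{id}-\phi_{\alpha})(h)$; and by \myref{alphav2} one has $\phi_{\alpha}\equiv \mathrm{id}$ modulo $(2,v_1^3)$, so $\phi_0^*(h)$ is divisible by $v_1^3$. Since every $z\in F_2^{0,1}=H^1(A_4,(E_{\cC})_*V(0))$ satisfies $v_1^kz=0$ with $k=1$ or $2$ (Figure \ref{fig:bock-a-4}), and $k<3$, the decomposition $\phi_0^*(h)=d'+v_1^kd$ of \myref{hkm-65} forces $d'=0$, hence $d''=0$ and $j^*(z)=0$. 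This is why $d_2$ vanishes identically on $F_2^{0,1}$, independently of whether the target happens to contain classes of the right degree and torsion order. Without this divisibility input your proof has a genuine gap.
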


\begin{proof}
If $q>1$, then $d_2(z) = 0$ since the target of the differential is zero. Suppose that $q=1$. Then $z$ is $v_1$--torsion. Let $r$ be the smallest integer such that $v_1^r z = 0$.
By \fullref{prop:a4v1tor} of \ref{sec:appcoh}, we can choose $r=1$ or $r=2$.
 Choose $h$ as in \fullref{hkm-65} and write
 \[ \phi_{0}(h) = (e-\phi_{\alpha})(h) = d' + v_1^r d.\]
However, $\phi_{\alpha} \equiv id$ modulo $(2,u_1^3)$. So we must have $d'=0$. By \fullref{mapj} and \fullref{hkm-65}, this implies that $d_2(z) = 0$ in the ADSS for $P\Sn_{\cC}^1$.
 \end{proof}

\begin{cor}\label{2-line-d2}
All differentials $d_2 : E_{2}^{0,q} \rightarrow E_2^{2, q-1}$ are zero.
\end{cor}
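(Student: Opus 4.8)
The plan is to deduce the statement from the vanishing of $d_2$ on the analogous spectral sequence for $P\Sn_{\cC}^1$, established in \myref{d20}, by exploiting the naturality of the map $F_r^{p,q} \to E_r^{p,q}$ together with the module structure recorded in \myref{v1linear} and \myref{g0-lin}. First I would observe that the map $F_r \to E_r$ induced by the projection $\Sn_{\cC}^1 \to P\Sn_{\cC}^1$ is a map of spectral sequences, so it commutes with $d_2$. On the $0$-line this map is $\varphi : H^q(A_4,(E_{\cC})_*V(0)) \to H^q(G_{24},(E_{\cC})_*V(0))$. Hence for any $x \in F_2^{0,q}$ we have $d_2(\varphi(x)) = \varphi(d_2(x))$, and since $d_2(x) = 0$ in $F_2^{0,q}$ by \myref{d20}, the differential $d_2$ vanishes on the image of $\varphi$ in $E_2^{0,q}$.

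It then remains to reduce an arbitrary class of $E_2^{0,q}$ to the image of $\varphi$. I would argue by induction on $q$, the cases $q \leq 0$ being vacuous since the target $E_2^{2,q-1}$ vanishes. For $q \geq 4$, every class of $E_2^{0,q}$ is a $g_0$-multiple of a class of $E_2^{0,q-4}$; since the differentials are $g_0$-linear by \myref{g0-lin}, the inductive hypothesis gives $d_2 = 0$ in this range. For $1 \leq q \leq 3$, \myref{A4-G24} shows that $\varphi$ is surjective onto $H^*(G_{24},(E_{\cC})_*V(0))$ modulo the image of multiplication by $h_1$, so any $z \in E_2^{0,q}$ may be written as $z = \varphi(x) + h_1 w$ with $w \in E_2^{0,q-1}$. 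Using $h_1$-linearity of the differentials (\myref{v1linear}) together with the previous paragraph and the inductive hypothesis, $d_2(z) = d_2(\varphi(x)) + h_1\, d_2(w) = 0$.

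The one point requiring care is the bookkeeping ensuring the induction is well-founded: multiplication by $h_1$ raises the cohomological degree $q$ by one, so $w$ genuinely lies in the lower group $E_2^{0,q-1}$, and multiplication by $g_0$ raises $q$ by four, so the $g_0$-reduction lands in $E_2^{0,q-4}$; in both cases the degree strictly decreases and the induction closes. I do not expect a serious obstacle here, since all the substantive work has been front-loaded into \myref{A4-G24} (the surjectivity of $\varphi$ modulo $h_1$ in degrees $\leq 3$) and \myref{d20} (the vanishing of $d_2$ for $P\Sn_{\cC}^1$); the corollary is then purely a matter of assembling $g_0$- and $h_1$-linearity with these two inputs.
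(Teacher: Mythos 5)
Your proof is correct and takes essentially the same route as the paper: the corollary is deduced from \myref{d20} via naturality of the comparison map $F_r^{p,q}\to E_r^{p,q}$, with the reduction to classes in the image of $\varphi$ carried out exactly as in the discussion preceding \myref{mapj} ($g_0$-linearity for $q\geq 4$, and surjectivity of $\varphi$ modulo $h_1$ in degrees $q\leq 3$ from \myref{A4-G24} combined with $h_1$-linearity). Your version merely makes the induction on $q$ explicit.
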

\begin{proof}
This follows from \fullref{rem:relFE} and \fullref{d20}.
\end{proof}

\begin{lem}\label{d30}
All differentials $d_3 : E_{3}^{0,q} \rightarrow E_3^{3, q-1}$ are zero.
\end{lem}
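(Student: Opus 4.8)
The plan is to imitate the proof of \myref{d20}, pushing the same argument from the bottom differential $d_2$ up to $d_3$, and to carry out the computation in the sparser spectral sequence $F_r^{p,q}$ associated to $P\Sn_{\cC}^1$, transporting the conclusion back along the map of spectral sequences $F_r^{p,q} \to E_r^{p,q}$.

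First I would cut down the set of classes to be checked. By \myref{g0-lin} the differentials are $g_0$-linear, and every class of $E_3^{0,q}$ with $q \geq 4$ is a $g_0$-multiple; since $d_3 : E_3^{0,q} \to E_3^{3,q-2}$ has zero target for $q \leq 1$, only $q = 2$ and $q = 3$ remain. By \myref{A4-G24} the map $\varphi$ is surjective modulo $(h_1)$ in these degrees, so by $h_1$-linearity it suffices to treat the generators of the image of $\varphi$. Those generators that are themselves $h_1$-multiples, namely $h_2^3 = h_1^2 x$ and $v_1 x^2 = h_1 y$, reduce under $h_1$-linearity to classes in $E_3^{0,q}$ with $q \leq 1$ and hence support no $d_3$. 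This leaves the classes $h_2^2$, $x^2$, $h_2 y$ in degree $q = 2$ and $h_2^2 y$ in degree $q = 3$, together with their $\Delta$-translates.

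Each of these classes lifts to a class $\tilde z \in H^q(A_4, (E_{\cC})_*V(0)) = F_3^{0,q}$, and by naturality of $F_r \to E_r$ it is enough to show $d_3(\tilde z) = 0$ in the $F$-spectral sequence. The key point is that, by the relations $v_1 h_2 = 0$ and $v_1^2 x = 0$ of \myref{TBG24}, every such $\tilde z$ is $v_1$-power torsion with $v_1^k \tilde z = 0$ for some $k \leq 2$. By \myref{mapj} the higher differentials on $\tilde z$ are detected by the single map $j^*$, which \myref{hkm-65} computes: writing $\delta_P(h) = v_1^k c$ for a cocycle $c$ representing $\tilde z$, one has $\phi^*_{s-1}(h) = d' + v_1^k d$ with $d'$ the truncation to $u_1$-degrees below $k$, and $j^*(\tilde z) = [d'']$ where $\delta_Q(d') = v_1^k d''$. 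I would then argue exactly as in \myref{d20}: choosing the chain map $\phi$ so that the factor $e - \alpha$ sits adjacent to the generators (the construction of \myref{liftF} provides such a lift), the $\F_4[[P\Sn_{\cC}^1]]$-linearity of $h$ turns $\phi^*_{s-1}(h)$ into an expression of the form $(\mathrm{id} - \phi_\alpha)(\cdots)$, which is divisible by $v_1^3$ because $\phi_\alpha \equiv \mathrm{id} \bmod v_1^3$. Since $k \leq 2 < 3$, the truncation $d'$ vanishes, and then $\delta_Q(d') = v_1^k d''$ together with the absence of $v_1$-torsion in the double complex forces $d'' = 0$. Hence $j^*(\tilde z) = 0$ and $d_3(\tilde z) = 0$; transporting along $\varphi$ gives the claim for all the generators above, and $g_0$- and $h_1$-linearity propagate it to all of $E_3^{0,q}$.

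The main obstacle I anticipate is the step requiring that $\phi^*_{s-1}(h)$ be divisible by $v_1^3$ for $s = 3$ (not only for $s = 1$, where $\phi_0$ visibly carries the factor $e - \alpha$). Concretely, one must check that the chain map $\phi : Q_* \to P_*$ can be chosen, as in \myref{liftF}, so that each $\phi_s$ retains the factor $e - \alpha$ next to the generator through filtration two; this is the analogue for $\partial_1$ of the divisibility established for $\partial_3'$ in \myref{v13-div}. As a safeguard, should that lift be awkward to produce, the same conclusion can be reached by a degree count: $d_3(\tilde z)$ is annihilated by $v_1$ and lies in $F_3^{3,1} \cong H^1(A_4', (E_{\cC})_*V(0))$ in internal degree $\equiv 0 \bmod 24$, whereas the $v_1$-socle of $H^1(A_4', (E_{\cC})_*V(0))$, as computed from the $v_1$-Bockstein spectral sequence of Appendix \ref{sec:appB}, is concentrated in internal degrees $\not\equiv 0 \bmod 24$, so the target group vanishes.
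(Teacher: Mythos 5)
Your strategy---passing to the sparser spectral sequence $F_r^{p,q}$ for $P\Sn_{\cC}^1$ and identifying the $d_3$ with the map $j^*$ via \myref{mapj} and \myref{hkm-65}---is a legitimate framework, and your reduction to a finite list of generators via $g_0$-, $h_1$- and $v_1$-linearity together with \myref{A4-G24} is sound. But the argument has a genuine gap exactly where you flag it, and neither of your two ways around it works. The divisibility of $\phi^*_{s-1}(h)$ by $v_1^3$ is only automatic for $s=1$, where $\phi_0$ is literally $(e-\alpha)$ and $\phi_{\alpha}\equiv id \bmod (2,v_1^3)$ applies; for $s=2,3$ the components $\phi_1,\phi_2$ of the chain map are produced by acyclic models and there is no reason they retain a factor of $id-\phi_{\alpha}$. \myref{liftF} does not supply such a lift: it lifts the map $F(\gamma e_0)=\gamma(e+i+j+k)(e-\alpha^{-1})e_1$ underlying $\partial_3'$, between the resolutions $\Ind_{G_{24}}^{\Sn_{\cC}^1}(C_*)$ and $\Ind_{C_6}^{\Sn_{\cC}^1}(B_*)$; that is a different map between different complexes than $\phi_*:Q_*\to P_*$. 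Your fallback degree count also fails. For $q=2$ the target is $F_3^{3,0}$, not $F_3^{3,1}$; the classes at issue do not all have internal degree $\equiv 0\bmod 24$ (for instance $x^2\Delta^m$ sits in degree $16+24m$); and $x^2$ is only annihilated by $v_1^2$, not $v_1$, since $v_1x^2=h_1y\neq 0$. The $v_1^2$-torsion socle of $E_2^{3,0}$ contains the classes $v_1^{8}d_{1+2t}$ in internal degree $16+24(1+2t)$, which is exactly the degree of $x^2\Delta^{1+2t}$, so degree considerations alone cannot kill $d_3(x^2\Delta^m)$ for $m$ odd.

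The paper's proof is much shorter and sidesteps all of this. Since the $d_2$'s off the $0$-line vanish (\myref{2-line-d2}) and $d_3$ out of $E_3^{0,1}$ has zero target, the classes $h_2\Delta^k$ and $x\Delta^k$ are permanent cycles and hence represent elements of $H^*(\Sn_{\cC}^1,(E_{\cC})_*V(0))$; by \myref{S21-lin} the differentials are linear over these classes. Every generator of $E_3^{0,q}$ for $q\geq 2$ not already handled by $v_1$- and $h_1$-linearity is a product of such a permanent cycle with a class of cohomological degree $\leq 1$, on which $d_3$ vanishes for degree reasons; in particular the one problematic case above is dispatched by $d_3(x^2\Delta^k)=x\,d_3(x\Delta^k)=0$. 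To salvage your approach you would have to compute $\phi_1$ and $\phi_2$ explicitly and verify the $v_1^3$-divisibility by hand, which is substantially more work than the multiplicativity argument.
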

\begin{proof}
Differentials $d_3:  E_{3}^{0,q} \rightarrow E_{3}^{3,q-2}$ are zero for degree reasons if $0 \leq q <2$. By \fullref{2-line-d2}, the classes ${\nu}\Delta^{s}$ survive to the $E_3$-term, and hence they must be permanent cycles. Thus, they represent cohomology classes in $H^*(\Sn_{\cC}^1,(E_{\cC})_*V(0))$. By \fullref{S21-lin}, the differentials are ${\nu}\Delta^s$--linear for all $s \in \Z$. Using this fact and linearity with respect to ${\eta}$ and $v_1$, the problem reduces to verifying the claim for $ x^2\Delta^{s}$. However, by the same argument, $x$ is a permanent cycle and 
$d_3(x^2\Delta^s) = xd_3(x\Delta^s)=0$.
\end{proof}

\begin{lem}\label{dr0}
All differentials $d_r : E_r^{0,q} \rightarrow E_{r}^{r, q-r+1}$ are zero.
\end{lem}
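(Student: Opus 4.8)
The plan is to reduce the assertion to the two differentials out of the zero-line that have already been handled, and to dispose of all remaining cases by sparsity. The differentials in question are $d_r : E_r^{0,q} \rightarrow E_r^{r,q-r+1}$, so their target sits in horizontal filtration $p=r$.

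First I would use the finite length of the algebraic duality resolution. Since there are no modules $\sC_p$ with $p>3$, we have $E_1^{p,q}=0$ for all $p>3$, and hence $E_r^{p,q}=0$ for $p>3$ on every page. In particular, for each $r\geq 4$ the target $E_r^{r,q-r+1}$ has horizontal degree $p=r\geq 4$ and is therefore zero, so $d_r$ vanishes on $E_r^{0,q}$ for trivial reasons.

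This leaves only the cases $r=2$ and $r=3$. For $r=2$, the differential $d_2 : E_2^{0,q}\rightarrow E_2^{2,q-1}$ was shown to vanish in \myref{2-line-d2}, and for $r=3$ the differential $d_3 : E_3^{0,q}\rightarrow E_3^{3,q-2}$ was shown to vanish in \myref{d30}. Combining these with the sparsity observation establishes the vanishing of $d_r$ on $E_r^{0,q}$ for all $r\geq 2$, which is the content of the lemma.

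I expect no genuine obstacle at this stage: the substantive work has already been carried out in establishing \myref{2-line-d2} (via the comparison map to the duality resolution spectral sequence for $P\Sn_{\cC}^1$, together with the congruence $\phi_{\alpha}\equiv id$ modulo $v_1^3$) and \myref{d30} (via $h_2\Delta^k$- and $x$-linearity of the higher differentials). The present lemma merely collects these facts and invokes the length-three sparsity of the resolution to kill every page beyond $r=3$.
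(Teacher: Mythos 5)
Your proof is correct and takes essentially the same route as the paper: the paper's own proof likewise combines the already-established vanishing of $d_2$ and $d_3$ on the zero-line with the observation that for $r\geq 4$ the target lies in filtration $p=r>3$, where the spectral sequence vanishes because the resolution has length three. Nothing is missing.
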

\begin{proof}
By \fullref{d20} and \fullref{d30}, $E_2^{*,*} \cong E_4^{*,*}$ and the spectral sequence collapses at $E_4$ since the targets for higher differentials are zero.
\end{proof}

\section{The action of the Morava stabilizer group}\label{sec:appC}

The goal of this section is to approximate the action of elements of $\Sn_{\cC}$ on $(E_{\cC})_*$. Some of our results are stronger than needed for the computations of this paper, but the better estimates are necessary for future computations. Note that the results of \fullref{subsec:act} rely on this section.

\subsection{The formal group laws}\label{subsec:fgls}
Let $\cE$ be an elliptic curve with Weierstrass equation
\[\mathcal{E} : y^2 +a_1xy +a_3y = x^3 + a_2x^2+a_4x+a_6.\]
Let $F_{\cE}(z_1, z_2)$ be the formal group law of $\cE$, where the coordinates $(z,w)$ at the origin are chosen so that
\begin{equation}\label{weiereq}
w(z) = z^3+a_1zw(z)+a_2z^2w(z)+a_3w(z)^2+a_4zw(z)^2+a_6w(z)^3.
\end{equation}
That the group $\Sn_{\cC}$ acts on $(E_{\cC})_*$ is a consequence of the fact that the formal group law $F_{E_{\cC}}$ of $E_{\cC}$ is a universal deformation of the formal group law $F_{\cC}$ of the elliptic curve
\[\cC: y^2+y=x^3\]
defined over any field extension of $\F_2$. Further, $F_{E_{\cC}}$ is the formal group law of an elliptic curve, namely
\[\cC_U  : y^2+3u_1xy+(u_1^3-1)y = x^3\]
defined over $(E_{\cC})_0$. That is, 
$F_{E_{\cC}} = F_{\cC_U}$.

We start by gathering information about $F_{\cC_U}$. 
We will also compute information about the formal group law of the curve
\[\cC_{\W} :  y^2-y=x^3 \]
defined over $\W$. The curve $\cC_{\W}$ is a lift of $\cC$ to $\W$, and $\cC_U$ reduces to $\cC_{\W}$ modulo $(u_1)$. We will derive information about $F_{\cC_U}$ from knowledge of $F_{\cC_{\W}}$.

The following results are proved using the methods described in Silverman \cite[Section 4]{silverman}. We recall the key tools here. We restrict to elliptic curves $\cE$ with homogeneous Weierstrass equation of the form
\[\mathcal{E} : y^2z +a_1xyz +a_3yz^2 = x^3.\] 
Let $z=-\frac{x}{y}$ and $w = -\frac{z}{y}$, so that $(z, w(z))$ is a coordinate chart of $\mathcal{E}$ at the origin, with
\begin{equation}\label{eqn:funcw}
w(z) = z^3 +a_1zw(z)+a_3w(z)^2 .\end{equation}
This can be used to write $w(z)$ as a power series in $z$. Letting
\begin{equation*}
\lambda(z_1,z_2) = \frac{w(z_2) -w(z_1)}{z_2-z_1},
\end{equation*}
the line through the points $(z_1, w(z_1))$ and $(z_2, w(z_2))$ has equation 
\begin{equation}\label{lineq}
w(z) = \lambda(z_1,z_2) z + w(z_1)-\lambda(z_1,z_2)z_1.\end{equation}
(Note that there is a sign mistake in Silverman \cite[Section 4.1]{silverman} in the equation of the connecting line.) Substituting (\ref{lineq}) in (\ref{weiereq}), we obtain a monic cubic polynomial whose roots are $z_1$, $z_2$ and $[-1]_{F_\mathcal{E}}\left(F(z_1,z_2)\right)$. The coefficient of $z^2$ is $ a_1\lambda(z_1,z_2) + a_3  \lambda(z_1,z_2)^2$. This implies that
\begin{equation}\label{minusell}
[-1]_{F_\mathcal{E}}\left(F(z_1,z_2)\right) =-z_1-z_2-a_1\lambda(z_1,z_2) -a_3\lambda(z_1,z_2)^2.
\end{equation}
Noting that
\[\lambda(z,z) = \lim_{s \ra z} \frac{w(s)-w(z)}{s-z} = w'(z),\]
it follows that 
\[\![-2]_{F_\mathcal{E}}(z) = -2z- a_1w'(z)-a_3w'(z)^2.\]
Finally, the series $[-1]_{F_\mathcal{E}}(z)$, which is $[-1]_{F_\mathcal{E}}\left(F(z,0) \right)$, is given by
\begin{equation*}
[-1]_{F_\mathcal{E}}(z) = -z -a_1\frac{w(z)}{z} - a_3 \frac{w^2(z)}{z^2}
\end{equation*}
so that $F_{\mathcal{E}}$ can be computed as $[-1]_{F_\mathcal{E}}([-1]_{F_\mathcal{E}}\left(F(z_1,z_2)\right))$. For example, 
\begin{align}\label{eqn:Fmodx5}
F_{\mathcal{E}}(x,y) \equiv x+y -a_1 x y -2 a_3 xy( x^2+y^2) -3 a_3 x^2 y^2 \mod (x,y)^5.
\end{align}

The following two results give formulas for the formal group law of the curve $\cC_U$ and of its $[-2]$-series, both integrally and modulo $2$. \fullref{mod2u2} was observed computationally by the author, but was proved by Henn.
\begin{prop}\label{intminustwo}
Modulo $(x,y)^5$, 
\[F_{\cC_U}(x,y) \equiv x+y-3u_1xy-2(u_1^3-1)xy(x^2+y^2)-3(u_1^3-1)x^2y^2. \]
The formal group law $F_{\cC_U}$ has $[-2]$-series
\[\![-2]_{F_{\cC_U}}(z)= -2z-9z\frac{z u_1-2 z^2 u_1^2+z^3(u_1^3-1)}{1-6 z u_1+9 z^2 u_1^2-4 z^3(u_1^3-1)} ,\]
so that
\begin{align*}
[-2]_{F_{\cC_U}}(z) = -2z-9u_1z^2-36u_1^2z^3+9z^4-144u_1^3z^4 + O(z^5).
\end{align*}
\end{prop}
\begin{proof}
The first claim follows directly from (\ref{eqn:Fmodx5}). For the curve $\cC_{U}$, we have
\[w'(z) = \frac{3(z^2+u_1w(z))}{1-3u_1z-2(u_1^3-1)w(z)}.\]
Combining
\[\![-2]_{F_{\cC_U}}(z)  =   -2z- 3u_1w'(z)-(u_1^3-1)w'(z)^2 \]
and 
\[(u_1^3-1)w(z)^2 = w(z)-z^3-3u_1zw(z),\]
gives the result for $[-2]_{F_{\cC_U}}(z)$. Its Taylor expansion is the last estimate.
\end{proof}
\begin{cor}\label{mod2u2}
\begin{align*}
[-2]_{F_{\cC_U}} (x)\equiv  u_1x^2+ \sum_{k\geq 0}u_1^{2k}x^{2k+4} \mod (2).
\end{align*}
\end{cor}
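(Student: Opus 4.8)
The plan is to obtain the congruence directly from the closed-form rational expression for the minus-two series supplied by \myref{intminustwo}, by reducing it coefficientwise modulo $2$ and then expanding the resulting geometric series. Concretely, I would begin from
\[[-2]_{F_{\cC_U}}(z)= -2z-9z\,\frac{z u_1-2 z^2 u_1^2+z^3(u_1^3-1)}{1-6 z u_1+9 z^2 u_1^2-4 z^3(u_1^3-1)}\]
and reduce every coefficient modulo $2$, using that the reduction map is a ring homomorphism from $\W[[u_1]][[z]]$ that carries the rational function to the power-series expansion of its reduction. Here $-2z$ vanishes, the factor $-9$ becomes $1$, the monomials with even integer coefficient ($-2z^2u_1^2$, $-6zu_1$, and $-4z^3(u_1^3-1)$) all drop out, $9z^2u_1^2$ becomes $z^2u_1^2$, and $u_1^3-1$ becomes $u_1^3+1$. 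This leaves
\[[-2]_{F_{\cC_U}}(z)\equiv \frac{z^2 u_1+z^4(u_1^3+1)}{1+z^2u_1^2}\pmod 2.\]

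Next I would expand the denominator as a geometric series. Its constant term is $1$, so it is a unit in $\F_2[[z,u_1]]$ and, in characteristic $2$,
\[\frac{1}{1+z^2u_1^2}=\sum_{k\ge 0}z^{2k}u_1^{2k}.\]
Multiplying the numerator through then produces two families of monomials, and one finds
\[[-2]_{F_{\cC_U}}(z)\equiv \sum_{k\ge 0}z^{2k+2}u_1^{2k+1}+\sum_{k\ge 0}z^{2k+4}\bigl(u_1^{2k+3}+u_1^{2k}\bigr)\pmod 2.\]
It then remains to collect the coefficient of each fixed power of $z$.

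The last step is the bookkeeping. The coefficient of $z^2$ comes only from the first sum (at $k=0$) and equals $u_1$. For $m\ge 0$, the coefficient of $z^{2m+4}$ receives $u_1^{2m+3}$ from the first sum (at $k=m+1$) and $u_1^{2m+3}+u_1^{2m}$ from the second sum (at $k=m$); over $\F_2$ the two copies of $u_1^{2m+3}$ cancel, leaving $u_1^{2m}$. Writing $x$ for $z$, this is exactly $u_1x^2+\sum_{m\ge 0}u_1^{2m}x^{2m+4}$, which proves the claim. There is no genuine obstacle here beyond this telescoping cancellation in characteristic $2$; the only point demanding care is the even-coefficient reduction, namely verifying that every term of the integral formula in \myref{intminustwo} whose coefficient is divisible by $2$ truly disappears, so that the rational function collapses cleanly to the displayed reduced form before the geometric expansion is applied.
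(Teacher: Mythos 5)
Your proposal is correct and follows essentially the same route as the paper's own proof: reduce the closed-form expression of \myref{intminustwo} modulo $2$ to $\bigl(u_1z^2+u_1^3z^4+z^4\bigr)/(1+u_1^2z^2)$, expand the denominator as a geometric series, and collect terms. The only difference is that you spell out the telescoping cancellation of the $u_1^{2m+3}z^{2m+4}$ terms, which the paper leaves implicit.
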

\begin{proof}
It follows from \fullref{intminustwo} that modulo $2$,
\begin{align*}[-2]_{F_\mathcal{C_U}}(z)      &\equiv    \frac{u_1z^2+u_1^3z^4+z^4}{1+u_1^2z^2}.
        \end{align*}
        Therefore, modulo $2$,
\begin{align*}[-2]_{F_\mathcal{C_U}}(z)
           &\equiv    \left({u_1z^2+u_1^3z^4+z^4}\right) \sum_{k\geq 0}u_1^{2k}z^{2k} \\
                 &\equiv u_1z^2 +  \sum_{k\geq 0}u_1^{2k}z^{2k+4}. \qedhere
\end{align*}
\end{proof}

Some of the key ingredients for the proof of the next result were brought to the author's attention by Inna Zakharevich. Let
$C_k = \frac{(2k)!}{k!(k+1)!}$
be the $k$'th Catalan number. Let
\begin{equation}\label{catnum}
C(y) = \sum_{k\geq 0} C_ky^k = \frac{1-\sqrt{1-4y}}{2y}
\end{equation}
be their generating series (see, for example, Wilf \cite[(2.3.9)]{MR2172781}). Let $D(y) = yC(y)$, so that
\begin{equation*}
D(y) = \frac{1-\sqrt{1-4y}}{2}.
\end{equation*}

\begin{prop}\label{twoseries}
Let $\cC_{\W}$ be the elliptic curve defined over $\W$ by the Weierstrass equation
$\cC_{\W} : y^2-y =x^3$.
Then
\[ [-2]_{\cC_{\W} } (z) = -2z+9z^4 \sum_{n\geq 0} (-1)^n4^nz^{3n}.\]
For $(z, w(z))$ a coordinate chart at the origin with $w(z) = z^3-w(z)^2$, 
\[w(z) = -D((-z)^3) = \sum_{n\geq 0} (-1)^{n}C_{n}z^{3(n+1)} = \frac{\sqrt{1+4z^3}-1}{2}.\]
Further,
\[\![-1]_{\cC_{\W}}F_{\cC_{\W}}(z_1, z_2) =-z_1-z_2+\frac{(z_1^3+z_2^3)+D(-(z_1^3+z_2^3+4z_1^3z_2^3))}{(z_2-z_1)^2} .\]
\end{prop}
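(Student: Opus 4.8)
The plan is to dispatch the three assertions in turn, in each case specializing to $\cC_{\Z}$ the general Weierstrass formulas recorded earlier in Section~\ref{subsec:fgls}; here $a_1 = 0$ and $a_3 = -1$. The first task is to pin down the coordinate $w(z)$. With these coefficients the defining relation (\ref{weiereq}) collapses to $w(z) = z^3 - w(z)^2$, i.e.\ the quadratic $w^2 + w - z^3 = 0$, whose unique power-series solution with $w(0)=0$ is $w(z) = \frac{1}{2}(-1 + \sqrt{1+4z^3})$. Writing $\sqrt{1+4z^3} = \sqrt{1 - 4(-z^3)}$ and comparing with the closed form (\ref{ycatnum}) of $D$ identifies $w(z) = -D(-z^3) = -D((-z)^3)$, and expanding $D(y) = \sum_{k\ge 0} C_k y^{k+1}$ through (\ref{catnum}) gives the Catalan series $\sum_{n\ge 0}(-1)^n C_n z^{3(n+1)}$. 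This proves the second displayed formula and, more importantly, records the identity $w(z_i)^2 = z_i^3 - w(z_i)$ that drives the remaining computations.

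For the $[-2]$-series, the specialization of the formula $[-2]_{F_{\cE}}(z) = -2z - a_1 w'(z) - a_3 w'(z)^2$ reads $[-2]_{F_{\cC_{\Z}}}(z) = -2z + w'(z)^2$. Differentiating $w^2 + w = z^3$ implicitly gives $w'(z) = 3z^2/(2w(z)+1) = 3z^2/\sqrt{1+4z^3}$, so that $w'(z)^2 = 9z^4/(1+4z^3)$; expanding the geometric series $1/(1+4z^3) = \sum_{n\ge 0}(-1)^n 4^n z^{3n}$ then yields the first displayed formula.

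The substantive step is the formula for $[-1]F$. Specializing (\ref{minusell}) gives $[-1]_{F_{\cC_{\Z}}}(F(z_1,z_2)) = -z_1 - z_2 + \lambda(z_1,z_2)^2$ with $\lambda(z_1,z_2) = (w(z_2)-w(z_1))/(z_2-z_1)$, so it suffices to express the numerator $(w(z_2)-w(z_1))^2$. Using $w(z_i)^2 = z_i^3 - w(z_i)$ I would rewrite $(w(z_2)-w(z_1))^2 = z_1^3 + z_2^3 + X$, where $X := -(w(z_1)+w(z_2)) - 2w(z_1)w(z_2)$. The claim then amounts to $X = D\bigl(-(z_1^3+z_2^3+4z_1^3z_2^3)\bigr)$. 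Since (\ref{ycatnum}) characterizes $D(y)$ among power series vanishing at the origin by $D(y)^2 - D(y) + y = 0$, it is enough to verify the polynomial identity $X^2 - X = z_1^3 + z_2^3 + 4z_1^3 z_2^3$. This is a direct expansion: substituting $w(z_i)^2 = z_i^3 - w(z_i)$ throughout, every monomial involving $w(z_1)$ or $w(z_2)$ cancels, leaving exactly the symmetric polynomial on the right. As $X$ and $D(-(z_1^3+z_2^3+4z_1^3z_2^3))$ are both the branch vanishing at the origin, they agree, and substituting back produces the third displayed formula.

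The only genuinely delicate point is this last one: one must recognize the closed form $D(-(z_1^3+z_2^3+4z_1^3z_2^3))$ and then see that the cross terms cancel. The structural reason this succeeds is that $w$ is quadratic over $\Z[[z^3]]$, so the symmetric functions $w(z_1)+w(z_2)$ and $w(z_1)w(z_2)$ are themselves controlled by $z_1^3$ and $z_2^3$; all of the square-root branch choices are forced by vanishing at the origin and cause no trouble.
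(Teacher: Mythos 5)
Your proposal is correct, and all three verifications go through; but it diverges from the paper's proof in two of the three steps, in ways worth noting. For $w(z)$ the two arguments are the same computation read in opposite directions: the paper checks that $z^3C((-z)^3)$ satisfies the functional equation $w = z^3 - w^2$ via $C(y) = 1 + yC(y)^2$ and then extracts the closed form, while you solve the quadratic first and match against (\ref{ycatnum}). For the $[-2]$-series the paper does not recompute anything: it specializes the already-established integral formula for $[-2]_{F_{\cC_U}}$ (\myref{intminustwo}) at $u_1 = 0$, whereas you give a self-contained derivation from $[-2]_{F_{\cE}}(z) = -2z - a_1w'(z) - a_3w'(z)^2$ and implicit differentiation of $w^2 + w = z^3$; your route is shorter if one only wants $\cC_{\Z}$, the paper's buys consistency with the deformation formula it needs elsewhere. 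The real difference is in the addition-law formula: the paper squares $\lambda(z_1,z_2) = \bigl(\sqrt{1+4z_2^3}-\sqrt{1+4z_1^3}\bigr)/\bigl(2(z_2-z_1)\bigr)$ and recombines the cross term $\sqrt{(1+4z_1^3)(1+4z_2^3)}$ into a single radical, whereas you characterize $D$ by the quadratic $D(y)^2 - D(y) + y = 0$ together with vanishing at the origin and verify that $X = -(w(z_1)+w(z_2)) - 2w(z_1)w(z_2)$ satisfies $X^2 - X = z_1^3 + z_2^3 + 4z_1^3z_2^3$ using only $w(z_i)^2 = z_i^3 - w(z_i)$. I checked that expansion and the cross terms do cancel as you claim; the uniqueness step is also sound, since of the two power-series roots of $T^2 - T - y' = 0$ exactly one has vanishing constant term. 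Your version is purely algebraic and avoids any manipulation of branch choices, at the cost of having to guess the argument $-(z_1^3+z_2^3+4z_1^3z_2^3)$ in advance; the paper's version produces that expression mechanically from the product of radicals.
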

\begin{proof}
It follows from \fullref{intminustwo} that, modulo $u_1$,
\begin{align*}
[-2]_{\cC_{\W} } (z) &= -2z+9z^4\frac{1}{1+4z^3}.
\end{align*}
This proves the first claim. The second claim is equivalent to showing that
$w(z) =z^3C((-z)^3)$.
It follows from (\ref{catnum}) that
$C(z) = 1+zC(z)^2$.
Therefore, 
\[C((-z)^3) = 1 +(-z)^3C((-z)^3)^2,\]
so that
\[z^3C((-z)^3) = z^3 -(z^3C((-z)^3))^2.\]
It also follows from (\ref{eqn:funcw}) that, for the curve $\cC_{\W}$, 
$w(z)=z^3-w(z)^2$.
Since $w(z)$ and $z^3C((-z)^3)$ satisfy the same functional equation, they must be equal. Further, this implies that
\[w(z) = \frac{\sqrt{1+4z^3}-1}{2}.\] 

Finally, note that
\begin{align*}
\lambda(z_1,z_2) &= \frac{1}{z_2-z_1}\left(\frac{\sqrt{1+4z_2^3}-1}{2} - \frac{\sqrt{1+4z_1^3}-1}{2}\right) \\ 
&= \frac{\sqrt{1+4z_2^3}-\sqrt{1+4z_1^3}}{2(z_2-z_1)}.
\end{align*}
Using (\ref{minusell}), it follows that
\begin{align*}[-1]_{F_{\cC_{\W}}}\left(F_{\cC_{\W}}(z_1,z_2)\right) &=-z_1-z_2+\lambda(z_1,z_2)^2 \\
&=-z_1-z_2+\frac{(z_1^3+z_2^3)+D(-(z_1^3+z_2^3+4z_1^3z_2^3))}{(z_2-z_1)^2} . \qedhere
\end{align*}
\end{proof}

\begin{prop}\label{minustwomod2}
Let $\cC$ be defined over $\F_4$ by the Weierstrass equation
$\cC : y^2+y =x^3$.
The local uniformizer at the origin $w(z) = z^3+w(z)^2$, satisfies
$w(z) = \sum_{k\geq 0}z^{3\cdot 2^k}$.
Further, $[-2]_{F_{\cC}}(z) = z^4$
and
\begin{equation*}
[-1]_{F_{\cC}}\left(F_{\cC}(z_1,z_2)\right)  = z_1+z_2 +\sum_{k\geq 1}\sum_{n=0}^{3\cdot2^{k-1}-1 } (z_1^{2(3\cdot 2^{k-1}-1-n)}z_2^{2n} ).
\end{equation*}
Finally, $[-1]_{F_{\cC}}(z)  = \sum_{k\geq 0}z^{3 \cdot 2^k-2}$,
so that
\begin{align*}
F_{\mathcal{C}}(z_1,z_2) &= z_1+z_2+z_1^2 z_2^2+z_1^6 z_2^4+z_1^4 z_2^6+z_1^8 z_2^8+z_1^{12} z_2^4+z_1^4 z_2^{12}+\ldots
\end{align*}
where the next term has order $22$. 
\end{prop}

\begin{proof}
One can compute directly that $w(z) = \sum_{k\geq 0}z^{3\cdot 2^k}$. This implies that $C_n \neq 0$ modulo $2$ if and only if $n+1 = 2^k$. Therefore, we have the following identity of power series
\[D(y) = \sum_{n\geq 0} C_ny^{n+1} =\sum_{k\geq 0} y^{2^k}. \]
Hence, using \fullref{twoseries} modulo $(2)$, we obtain
\begin{align*}[-1]_{F_{\cC}}\left(F_{\cC}(z_1,z_2)\right)  &=z_1+z_2+\frac{(z_1^3+z_2^3)+D(z_1^3+z_2^3)}{z_2^2+z_1^2}\\
&=z_1+z_2+\frac{1}{z_2^2+z_1^2}\left(\sum_{k\geq 1} (z_1^2)^{3\cdot 2^{k-1}}+ (z_2^2)^{3\cdot 2^{k-1}}\right) \\
&= z_1+z_2 +\sum_{k\geq 1}\sum_{n=0}^{3\cdot2^{k-1}-1 } (z_1^{2(3\cdot 2^{k-1}-1-n)}z_2^{2n} ).
\end{align*}
This gives the result for $[-1]_{F_{\cC}}\left(F_{\cC}(z_1,z_2)\right)$. Letting $z_1=z$ and $z_2=0$ gives it for $[-1]_{F_{\cC}}(z)$.
A direct computation gives the estimate for $F_{\cC}(z_1, z_2)$.
\end{proof}

\subsection{The technique for computing the action of $\Sn_{\cC}$}\label{technique}
The method presented here is an adaptation of the techniques used by Henn, Karamanov and Mahowald in \cite{HKM}. Let $\gamma$ be in $\Sn_{\cC}$. Then $\gamma \in \F_4[\![x]\!]$ is a power series which satisfies
\[ \gamma ( F_{\cC}(x,y) )= F_{\cC}(\gamma(x),\gamma(y)).\]
Recall from \fullref{sec:leftact} that $\gamma$ gives rise to isomorphisms $\phi_{\gamma}: (E_{\cC})_* \rightarrow (E_{\cC})_*$ and
$h_{\gamma}: \phi_{\gamma}^*F_{E_{\cC}} \rightarrow F_{E_{\cC}}$,
where 
$h_{\gamma} \in (E_{\cC})_0[\![x]\!]$.
The action of $\gamma$ on $(E_{\cC})_*$ is given precisely by $\phi_{\gamma}$.

The isomorphism $\phi_{\gamma}$ is linear over $\W$; hence it is sufficient to specify $\phi_{\gamma}(u)$ and $\phi_{\gamma}(u_1)$. The morphism $h_{\gamma}$ is a power series
\[h_{\gamma}(x) = t_{0}(\gamma) x + t_{1}(\gamma) x^2+ t_{2}(\gamma)x^3 +\ldots\]
where 
\[t_{i}(\gamma) :  \mathbb{S}_{\cC} \to  (E_{\cC})_0 = \W[\![u_1]\!]\]
are continuous maps. By (\ref{eqn:phiu}) $\phi_{\gamma}(u) = h_{\gamma}'(0) u =t_{0}(\gamma) u$,
which gives the action of $\gamma$ on $u$.

The morphism $h_{\gamma}$ must satisfy
\begin{align}\label{h}
h_{\gamma}([-2]_{\phi_{\gamma}^*F_{E_{\cC}}}(x)) = [-2]_{F_{E_{\cC}}}(h_{\gamma}(x)).
\end{align}
This imposes a set of relations on the parameters $t_{i}(\gamma)$ and $\phi_{\gamma}(u_1)$. 
Further, $h_{\gamma}$ is a lift of $\gamma$, so that $h_{\gamma} \equiv \gamma$ modulo $(2, u_1)$.
This specifies the parameters $t_i(\gamma)$ modulo $(2, u_1)$. With this information, the relations imposed by (\ref{h}) are sufficient to approximate $\phi_{\gamma}$. Before executing this program, we prove a preliminary result.

\begin{prop}\label{Intaction}
If $\gamma \in \Z_2^{\times} \cap \Sn_{\cC}$, so that $\gamma = \sum_{i\geq 0}a_i T^{2i}$,
for $a_i \in \{0,1\}$. Let 
$\ell = \sum_{i\geq 0}a_i (-2)^i $
in $\Z_2^{\times} \subseteq (E_{\cC})_0$. Then $\phi_{\gamma}(u_1) = u_1$ and $\phi_{\gamma}(u) = \ell u$.
\end{prop}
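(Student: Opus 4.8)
The plan is to recognise $\gamma$ as a \emph{central} element of $\Sn_{\cC}$ and to exploit the fact that multiplication by a scalar is a canonical endomorphism which deforms without disturbing the deformation. First I would use the presentation $\Sn_{\cC}\cong(\W\langle T\rangle/(T^2=-2,\ \omega T=T\omega^{\sigma}))^{\times}$ together with \myref{-2}, which gives $T^2=[-2]_{F_{\cC}}$ in $\End(F_{\cC})$. The assignment $n\mapsto[n]_{F_{\cC}}$ is the canonical ring map $\Z_2\to\End(F_{\cC})$, and under it $T^2$ is the scalar $-2$. Hence the sum $\gamma=\sum_{i\geq0}a_iT^{2i}$ is itself a scalar $\ell\in\Z_2^{\times}$, and as an endomorphism of $F_{\cC}$ it is precisely the multiplication-by-$\ell$ series $[\ell]_{F_{\cC}}$.

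Next I would feed this into the construction of the action recalled in Section \ref{sec:ActGn}. Choose as the lift $g$ of $\gamma$ the series $g=[\ell]_{F_R}$, the $\ell$-series of the universal deformation $F_R$ over $R=(E_{\cC})_0=\W[[u_1]]$; it reduces modulo $\mathfrak{m}$ to $[\ell]_{F_{\cC}}=\gamma$, so it is a legitimate lift. Since $[\ell]_{F_R}$ is an endomorphism of $F_R$, we have $F_R(g(x),g(y))=g(F_R(x,y))$, whence
\[
F_g(x,y)=g^{-1}F_R(g(x),g(y))=F_R(x,y).
\]
Thus $F_g=F_R$ on the nose, so the classifying map is $\phi_{\gamma}=\mathrm{id}$ and the $\star$-isomorphism is $f_g=\mathrm{id}$. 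In particular $\phi_{\gamma}$ fixes $R$, which gives $\phi_{\gamma}(u_1)=u_1$.

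It then remains to compute $\phi_{\gamma}(u)$. By definition $f_{\gamma}$ is the composite $(\phi_{\gamma})_*F_R\xra{f_g}F_g\xra{g}F_R$, and with $f_g=\mathrm{id}$ this is simply $f_{\gamma}=g=[\ell]_{F_R}$. The linear coefficient of an $\ell$-series is $\ell$, so $f_{\gamma}'(0)=\ell$, and (\ref{phiu}) yields $\phi_{\gamma}(u)=f_{\gamma}'(0)u=\ell u$, as claimed.

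The main obstacle, and the only genuinely delicate point, is the very first identification: making rigorous that the $T$-adically convergent sum $\sum_i a_iT^{2i}$ in $\End(F_{\cC})$ agrees with the scalar $\ell$ and with its multiplication series. This requires that $\Z_2\to\End(F_{\cC})$ be a continuous ring homomorphism for the $T$-adic (equivalently $2$-adic) topologies and that $[\,\cdot\,]_{F_{\cC}}$ be continuous, so that the infinite sum may be evaluated termwise; both follow from the completeness of $\End(F_{\cC})$ and the relation $T^2=-2$. Everything after that is formal. As a sanity check consistent with this argument, the central involution $\gamma=[-1]_{F_{\cC}}$ recovers the stated action $\phi_{-1}(u_1)=u_1$, $\phi_{-1}(u)=-u$ of the order-two subgroup $C_2$. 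An alternative, more computational route is available through Section \ref{technique}: one writes $f_{\gamma}=\sum_i t_ix^{i+1}$ with $f_{\gamma}\equiv\gamma\bmod(2,u_1)$ by (\ref{timod2}) and solves the functional equation (\ref{h}) order by order; but the conceptual argument above avoids these calculations entirely.
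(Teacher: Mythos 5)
Your proposal is correct and is essentially the paper's own argument: the paper likewise lifts $\gamma$ to the endomorphism $g(x)=a_0x+_{F_{E_{\cC}}}a_1[-2]_{F_{E_{\cC}}}(x)+_{F_{E_{\cC}}}\cdots$ of the universal deformation itself (your $[\ell]_{F_R}$ written out termwise), concludes $F_g=F_R$ so that $\phi_{\gamma}=\mathrm{id}$ on $\W[[u_1]]$ and $f_{\gamma}=g$, and reads off $\phi_{\gamma}(u)=g'(0)u=\ell u$ from (\ref{phiu}). Your added remarks on $T$-adic convergence and the $[-1]$ sanity check are consistent with, but not needed beyond, what the paper records.
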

\begin{proof}
The element $\gamma$ is given by
\[\gamma(x) = a_0x +_{F_{\cC}} a_1 [-2]_{F_{\cC}}(x)  +_{F_{\cC}} a_2 [4]_{F_{\cC}}(x) +_{F_{\cC}}  \ldots \]
Let $g$ be the lift for $\gamma$ given by
\[g(x) = a_0x +_{F_{E_{\cC}}} a_1 [-2]_{F_{E_{\cC}}}(x)  +_{F_{E_{\cC}}} a_2 [4]_{F_{E_{\cC}}}(x) +_{F_{E_{\cC}}}\ldots\]
Then $g$ is an automorphism of $F_{E_\cC}$, hence $\phi_{\gamma} :  (E_{\cC})_0 \ra  (E_{\cC})_0 $ is the identity and
$h_{\gamma}(x) = g(x)$.
Since $[n]_{F_{E_{\cC}}}(x) \equiv nx $ modulo $(x^2)$ for $n\in \mathbb{Z}$, $g(x) \equiv \ell x$ modulo $(x^2)$. Therefore, $g'(0) =\ell$ and $\phi_{\gamma}(u) = \ell u$.
\end{proof}

\begin{thm}\label{int} 
Let $\gamma \in \mathbb{S}_{\cC}$ and $t_i =t_i(\gamma)$. Then 
\begin{align*}
\phi_{\gamma}(u_1) &= u_1t_{0}+\frac{2}{3}\frac{t_{1}}{t_0}.
\end{align*}
In particular, $\phi_{\gamma}(u_1) \equiv  t_0 u_1$ and
$\phi_{\gamma}(u) \equiv t_0 u$ modulo $(2)$ and $v_1=u_1u^{-1}$ is fixed by the action of $\Sn_{\cC}$ modulo $(2)$.
\end{thm}
\begin{proof}
Recall from (\ref{h}) that
\[h_{\gamma}([-2]_{\phi_{\gamma}^*F_{E_{\cC}}}(x)) = [-2]_{F_{E_{\cC}}}(h_{\gamma}(x)).\]
Using \fullref{intminustwo}, one obtains the following relation on the coefficients of $x^2$,
\begin{align*}
-9\phi_{\gamma}(u_1)t_{0} +4t_{1} &= -9u_1t_{0}^2-2t_{1}.
\end{align*}
Because $\phi_{\gamma}$ is an isomorphism, $t_{0}$ is invertible. Isolating $\phi_{\gamma}(u_1)$ and dividing both sides by $-9t_{0}$ proves the claim.
\end{proof}

Therefore, to approximate the action of an element $\gamma$ in $\Sn_{\cC}$ on $(E_{\cC})_*$, it suffices to approximate the parameters $t_{0}(\gamma)$ and $t_1(\gamma)$.

\subsection{Approximations for the parameters $t_i(\gamma)$}\label{parameters}
In this section, we use the technique described in \fullref{technique} to approximate the parameters $t_i({\gamma})$.

\begin{cor}\label{cor:eqnmod2u16}
Modulo $(2,u_1^6)$,
\begin{align*}
t_s &\equiv t_s^4+u_1t_{2s+1}^2+ \binom{s+2}{2}t_{0}^2t_{s+1}u_1^2+\sum_{i=0}^{s-1}u_1^2t_i^{4}t_{2s-1-2i}^2  \\
&+\left(\binom{s}{1}t_0^4t_{s-1}  +\binom{s}{2}t_0^4t_{s-1}+ \binom{s+3}{4}t_0^4t_{s+2}  + \binom{s+2}{1}t_{\frac{s-1}{2}}^8\right)u_1^4.
\end{align*}
\end{cor}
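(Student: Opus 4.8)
The plan is to read the stated congruence off the functional equation (\ref{h}), namely
\[f_{\gamma}\big([-2]_{\phi_{\gamma}^{*}F_{\cC_U}}(x)\big) = [-2]_{F_{\cC_U}}\big(f_{\gamma}(x)\big),\]
by reducing everything modulo $(2,u_1^6)$ and comparing the coefficient of $x^{4(s+1)}$ on the two sides. Recall $F_{E_{\cC}}=F_{\cC_U}$ and $f_{\gamma}(x)=\sum_{m\geq 0}t_m x^{m+1}$. By \myref{mod2u2}, modulo $(2,u_1^6)$ one has
\[[-2]_{F_{\cC_U}}(x)\equiv u_1x^2+x^4+u_1^2x^6+u_1^4x^8,\]
since all remaining terms are divisible by $u_1^6$. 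By (\ref{phiAp}) of \myref{int} we have $\phi_{\gamma}(u_1)\equiv u_1t_0$ mod $2$; as $t_0$ is a unit, $u_1t_0$ and $u_1$ generate the same ideal, so truncating modulo $u_1^6$ is legitimate and substituting $u_1\mapsto\phi_{\gamma}(u_1)$ yields, modulo $(2,u_1^6)$,
\[g(x):=[-2]_{\phi_{\gamma}^{*}F_{\cC_U}}(x)\equiv u_1t_0x^2+x^4+u_1^2t_0^2x^6+u_1^4t_0^4x^8.\]

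For the right-hand side I would use only that reduction mod $2$ acts as Frobenius on coefficients, so $f_{\gamma}^{2}=\sum_m t_m^2x^{2(m+1)}$, $f_{\gamma}^{4}=\sum_m t_m^4x^{4(m+1)}$, and $f_{\gamma}^{8}=\sum_m t_m^8x^{8(m+1)}$. Extracting the coefficient of $x^{4(s+1)}$ from $u_1f_{\gamma}^2+f_{\gamma}^4+u_1^2f_{\gamma}^6+u_1^4f_{\gamma}^8$ is then mechanical: the $f_{\gamma}^4$ term contributes $t_s^4$; the $u_1f_{\gamma}^2$ term contributes $u_1t_{2s+1}^2$; writing $f_{\gamma}^6=f_{\gamma}^4f_{\gamma}^2$ the $u_1^2f_{\gamma}^6$ term contributes $u_1^2\sum_{i=0}^{s-1}t_i^4t_{2s-1-2i}^2$; and the $u_1^4f_{\gamma}^8$ term contributes $u_1^4t_{(s-1)/2}^8$ precisely when $s$ is odd, which is exactly recorded by the factor $\binom{s+2}{1}\equiv s$ mod $2$ (this factor both supplies the parity indicator and formally kills the ill-defined symbol $t_{(s-1)/2}$ when $s$ is even). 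This produces every term of the statement except the three carrying a factor of $t_0$.

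The only laborious part is the left-hand side $f_{\gamma}(g(x))=\sum_m t_m\,g(x)^{m+1}$. I would write $g(x)=x^4+P$ with perturbation $P=u_1t_0x^2+u_1^2t_0^2x^6+u_1^4t_0^4x^8$, expand $g(x)^{m+1}=\sum_k\binom{m+1}{k}(x^4)^{m+1-k}P^k$, and record each monomial of $P^k$ by the triple $(p,q,r)$ counting factors of the three types, with $p+q+r=k$, multinomial coefficient $\binom{p+q+r}{p,q,r}$, and weight $u_1^{\,p+2q+4r}t_0^{\,p+2q+4r}x^{\,2p+6q+8r}$. Such a monomial contributes to $x^{4(s+1)}$ iff $2(m-s)=p-q-2r$, and its $u_1$-valuation is $p+2q+4r$, which must be at most $5$. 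Enumerating the finitely many admissible $(m,p,q,r)$ of valuation $\leq 5$ and reducing coefficients mod $2$, the survivors are $t_s$ (valuation $0$), then $\binom{s+2}{2}t_0^2t_{s+1}u_1^2$ (valuation $2$), and at valuation $4$ the three terms $\binom{s+3}{4}t_0^4t_{s+2}$, $\binom{s}{2}t_0^4t_{s-1}$, $\binom{s}{1}t_0^4t_{s-1}$; the valuation $1,3,5$ cases and the remaining valuation $4$ cases vanish, either because $p-q-2r$ is forced odd (no integer $m$) or because the multinomial coefficient $\binom{p+q+r}{p,q,r}$ is even. Equating the two sides and isolating the valuation-$0$ term $t_s$ (using $-1=1$ in characteristic $2$) gives exactly the claimed congruence.

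The main obstacle is purely combinatorial: organizing the multinomial expansion of $g(x)^{m+1}$ so that no contributing $(m,p,q,r)$ is missed and so that the parity of each multinomial coefficient is correctly evaluated. Conceptually nothing lies deeper than the two inputs \myref{mod2u2} and (\ref{phiAp}); the care is entirely in justifying the truncation modulo $(2,u_1^6)$ (which rests on $t_0$ being a unit) and in checking that precisely the stated terms survive mod $2$.
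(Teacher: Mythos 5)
Your proposal is correct and follows essentially the same route as the paper: both sides of the functional equation $f_{\gamma}([-2]_{\phi_{\gamma}^*F_{E_{\cC}}}(x)) = [-2]_{F_{E_{\cC}}}(f_{\gamma}(x))$ are reduced modulo $(2,u_1^6)$ using \myref{mod2u2} and $\phi_{\gamma}(u_1)\equiv t_0u_1$, and the coefficient of $x^{4(s+1)}$ is extracted; your multinomial bookkeeping over triples $(p,q,r)$ is just a more systematic packaging of the term-by-term expansion the paper writes out, and your use of $f_{\gamma}^6=f_{\gamma}^4f_{\gamma}^2$ is equivalent to the paper's cube identity for $(\sum a_ix^i)^3$ mod $2$. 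All the parity checks (odd $p-q-2r$ forcing no contribution, even multinomial coefficients, and $\binom{s+2}{1}$ as the parity indicator for the $t_{(s-1)/2}^8$ term) match the paper's computation.
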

\begin{proof}
Let $h_{\gamma}(x) = \sum_{i=0}^{\infty}t_{i}x^{i+1}$. Using  \fullref{mod2u2}, we obtain
\begin{align*}
h_{\gamma}([-2]_{\phi_{\gamma}^*F_{E_{\cC}}}(x)) &=  \sum_{i=0}^{\infty}t_{i}\left( t_0 u_1 x^2+x^4+ \sum_{i=1}^{\infty}(t_0 u_1 )^{2i}x^{4+2i} \right)^{i+1} \\
&\equiv  \sum_{i=0}^{\infty}t_{i}\left( t_0 u_1 x^2+x^4+t_0^2u_1^2x^6+ t_0^4u_1^4x^8 \right)^{i+1}  \\
&\equiv  \sum_{i=0}^{\infty}t_{i}\bigg(x^{4(i+1)}+ \binom{i+1}{1}(t_0 u_1 x^{4i+2}+t_0^2u_1^2x^{4i+6} +t_0^4u_1^4x^{4i+8})  \\
&+  \binom{i+1}{2}( t_0^2 u_1^2x^{4i} +t_0^4u_1^4x^{4i+8})\\
& +  \binom{i+1}{3}( t_0^3 u_1^3x^{4 i-2}+ t_0^4 u_1^4x^{4 i+2}+t_0^5 u_1^5x^{4 i+6} )\\
&+  \binom{i+1}{4} t_0^4u_1^4x^{4i-4} + \binom{i+1}{5} t_0^5u_1^5x^{4i-6} \bigg).
\end{align*}
Further,
\begin{align*}
[-2]_{F_{E_{\cC}}}(h_{\gamma}(x)) &= u_1\left( \sum_{i=0}^{\infty}t_{i}x^{i+1}\right)^2    +\left( \sum_{i=0}^{\infty}t_{i}x^{i+1}\right)^4   + \sum_{k=1}^{\infty}u_1^{2k}\left( \sum_{i=0}^{\infty}t_{i}x^{i+1}\right)^{2k+4}  \\
&\equiv  \sum_{i=0}^{\infty}\left(u_1 t_{i}^2x^{2(i+1)} +t_{i}^4x^{4(i+1)} +u_1^{4}t_{i}^8x^{8(i+1)}\right)  +u_1^{2}\left( \sum_{i=0}^{\infty}t_{i}^2x^{2(i+1)}\right)^{3} 
\end{align*}
Next, note that
$\left(\sum_{i\geq 0}a_ix^i\right)^3 \equiv \sum_{k \geq 0}\sum_{2i+j=k}a_i^2a_jx^k$.
Therefore,
\begin{align*}
u_1^2\left( \sum_{i=0}^{\infty}t_{i}^2x^{2(i+1)}\right)^{3}   & \equiv  \sum_{k \geq 0}\sum_{2i+j={k}}u_1^2t_i^{4}t_j^2x^{2k+6}.
\end{align*}
Now, using (\ref{h}), the coefficient of $x^{4(s+1)}$ gives the relation
\begin{align*}
t_s &\equiv t_s^4+u_1t_{2s+1}^2+ \binom{s+2}{2}t_{0}^2t_{s+1}u_1^2+\sum_{2i+j={2s-1}}u_1^2t_i^{4}t_j^2 \\
& \ \ \ \ \ +\left(\binom{s}{1}  +\binom{s}{2} \right)t_0^4t_{s-1}u_1^4 + \binom{s+3}{4}t_0^4t_{s+2}u_1^4  + \binom{s+2}{1}t_{\frac{s-1}{2}}^8u_1^4
\end{align*}
(Note that the the coefficient of the last term is chosen to be zero when $s$ is even, so that when $t_{\frac{s-1}{2}}$ has a non-zero coefficient, $(s-1)/2$ is an integer.)
\end{proof}

\begin{prop}\label{prop:mod(2u1)2}
For $t_i=t_i(\gamma)$ where $\gamma \in \Sn_{\cC}$, then
\[t_i \equiv t_i^4 + u_1t_{2i+1}^2 + 2t_{4i+3}+2\sum_{\substack{r+s=2i \\  0\leq r< s}}t_r^2t_s^2  \mod (2, u_1)^2.\]
\end{prop}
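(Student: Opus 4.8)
The plan is to run exactly the scheme used in the proof of \myref{mod2u16}: expand the defining functional equation (\ref{h}),
\[f_{\gamma}([-2]_{\phi_{\gamma}^*F_{E_{\cC}}}(x)) = [-2]_{F_{E_{\cC}}}(f_{\gamma}(x)),\]
as a power series in $x$ and read off the coefficient of $x^{4(i+1)}$, except that now the coefficients are reduced modulo $(2,u_1)^2=(4,2u_1,u_1^2)$ rather than modulo $2$. The first step is to reduce the closed-form $[-2]$-series of \myref{intminustwo} modulo this ideal. Writing $\mathfrak{m}=(2,u_1)$, a direct simplification of the rational expression shows that its denominator is congruent to $1$ and its numerator to $zu_1-z^3$ modulo $\mathfrak{m}^2$, so that
\[[-2]_{F_{E_{\cC}}}(z)\equiv 2z+u_1z^2+z^4 \pmod{\mathfrak{m}^2}.\]
The pleasant point is that this reduction is a \emph{polynomial}: all higher-order terms in $z$ already lie in $\mathfrak{m}^2$.

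Next I would treat the two sides separately. For the left-hand side, note that $[-2]_{\phi_{\gamma}^*F_{E_{\cC}}}(x)$ is obtained by applying $\phi_{\gamma}$ to the coefficients above; since $\phi_{\gamma}$ is a local automorphism of $\W[[u_1]]$ it preserves $\mathfrak{m}$ and hence $\mathfrak{m}^2$, giving $[-2]_{\phi_{\gamma}^*F_{E_{\cC}}}(x)\equiv 2x+cx^2+x^4$ with $c:=\phi_{\gamma}(u_1)\in\mathfrak{m}$ (this also follows from \myref{int}, as $u_1t_0\in(u_1)$ and $\tfrac23\tfrac{t_1}{t_0}\in(2)$). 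Substituting into $f_{\gamma}(y)=\sum_j t_j y^{j+1}$ and expanding $(2x+cx^2+x^4)^{j+1}$ by the multinomial theorem, I would argue that every mixed monomial dies: a factor $2x$ carries a $2$ and a factor $cx^2$ carries an element of $\mathfrak{m}$, so any term using at least two of these (two $2x$'s, two $cx^2$'s, or one of each) has coefficient in $(4)+2\mathfrak{m}+\mathfrak{m}^2\subseteq\mathfrak{m}^2$. Among the surviving selections (at most one $2x$ and at most one $cx^2$, never both) a parity-of-exponent check shows that only the pure $x^4$ selection has total degree divisible by $4$. Hence the left-hand coefficient of $x^{4(i+1)}$ collapses to the single term $t_i$.

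For the right-hand side I would substitute $f_{\gamma}(x)$ into $2z+u_1z^2+z^4$ and collect the coefficient of $x^{4(i+1)}$. The linear term yields $2t_{4i+3}$; in the quadratic term the off-diagonal part carries a factor $2u_1\in\mathfrak{m}^2$, leaving $u_1\sum_j t_j^2x^{2j+2}$ and hence $u_1t_{2i+1}^2$. The quartic term is the heart of the matter: here I would invoke the congruence
\[\Big(\sum_j a_j\Big)^4\equiv \sum_j a_j^4+2\sum_{j<k}a_j^2a_k^2 \pmod 4,\]
proved by squaring $(\sum_j a_j)^2=\sum_j a_j^2+2\sum_{j<k}a_ja_k$ and discarding the two resulting multiples of $4$. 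With $a_j=t_jx^{j+1}$ this produces $t_i^4$ together with $2\sum_{r+s=2i,\,r<s}t_r^2t_s^2$. Equating the two sides then gives the asserted relation.

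The main obstacle is purely bookkeeping inside the mixed modulus $(4,2u_1,u_1^2)$: one must keep the ``$2$-adic'' direction (mod $4$) and the ``$u_1$'' direction (mod $2$ beyond first order) straight simultaneously, verifying that every cross term on both sides genuinely lands in $\mathfrak{m}^2$ and that the fourth-power congruence above is precisely what generates the final error sum $2\sum_{r<s}t_r^2t_s^2$. Everything else is a routine, if slightly delicate, extraction of one coefficient.
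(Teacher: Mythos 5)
Your proposal is correct and follows essentially the same route as the paper: expand the functional equation (\ref{h}) using the closed form of the $[-2]$-series from \myref{intminustwo}, read off the coefficient of $x^{4(i+1)}$, and use the mod-$4$ congruence for fourth powers to produce the error sum $2\sum_{r<s}t_r^2t_s^2$. The only organizational difference is that the paper first derives the relation modulo $(4,u_1)$ and then imports the $u_1t_{2i+1}^2$ term by combining with \myref{mod2u12}, whereas you carry out a single computation modulo $(2,u_1)^2$; both are valid.
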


\begin{proof}
Modulo $(4, u_1)$, we have $[-2]_{F_{\cC_U}}(x) \equiv 2x+x^4$. 
This gives
\begin{align*} 
h_{\gamma}([-2]_{\phi_{\gamma}^*F_{E_{\cC}}}(x))  \equiv \sum_{i=0}^{\infty} t_i\left(x^{4(i+1)}+2\binom{i+1}{1}x^{4i+1} \right) 
\end{align*}
and
\begin{align*} 
[-2]_{F_{E_{\cC}}}(h_{\gamma}(x)) &\equiv \sum_{i=0}^{\infty}2t_ix^{i+1} + \left( \sum_{i=0}t_ix^{i+1}\right)^4\\
&\equiv \sum_{i=0}^{\infty}2t_ix^{i+1}     + \sum_{i=0}t_{i}^4x^{4(i+1)}   + \sum_{i=1}^{\infty}x^{4+2i}2 \sum_{\substack{r+s=i \\  0\leq r< s}}t_r^2t_s^2.
\end{align*}
Using (\ref{h}), the coefficient of $x^{4(i+1)}$ gives the relation
\begin{align*}
t_{i} &=2t_{4i+3}+t_i^4+2\sum_{\substack{r+s=2i \\  0\leq r< s}}t_r^2t_s^2 \mod (4,u_1).
\end{align*}
The claim then follows from \fullref{cor:eqnmod2u16}.
\end{proof}
\begin{prop}\label{prop:t0t1eqn}
Modulo $(4)$
\begin{align*}
t_0&\equiv t_0^4+2 t_3+3 t_1^2 u_1+2 t_0 t_2 u_1+3t_0^2 t_1 u_1^2.
\end{align*}
Modulo $(2)$,
\begin{align*}
t_1 &\equiv t_1^4+t_3^2 u_1+t_0^4 t_1^2 u_1^2+t_0^2 t_2 u_1^2+t_0^5 u_1^4+t_0^8 u_1^4+t_0^4 t_3 u_1^4.
\end{align*}
\end{prop}
\begin{proof}
Modulo $(4)$, the coefficient of $x^4$ in $h_{\gamma}([-2]_{\phi_{\gamma}^*F_{E_{\cC}}}(x))$ is
$t_0+\phi_{\gamma}(u_1)^2 t_1 $
and the coefficient of $[-2]_{F_{E_{\cC}}}(h_{\gamma}(x))$ is given by
\begin{align*}
 t_0^4+2 t_3+3t_1^2 u_1+2 t_0 t_2 u_1 
\end{align*}
Recall from \fullref{int} that $\phi_{\gamma}(u_1) = u_1t_{0}+\frac{2}{3}\frac{t_{1}}{t_0}$. This and (\ref{h}) imply that
\begin{align*}
  t_0+t_0^2t_1u_1^2 \equiv  t_0^4+2 t_3+3t_1^2 u_1+2 t_0 t_2 u_1 .
\end{align*}
Isolating $t_0$ proves the first claim. A similar argument using the coefficients of $x^8$ give the desired relation for $t_1$.
\end{proof}

Recall that $\gamma \in \Sn_{\cC}$ has an expansion of the form
\[\gamma = a_0+a_1T+a_2T^2+a_3T^3 +\ldots\]
Here the $a_i$ are solutions to the equation $x^4-x=0$. Recall from \fullref{sec:TvS} that if $\omega^s \in \End(F_{\cC})$ is a solution to the equation $x^4-x=0$, then it corresponds the automorphism
\[\omega^s(x) = \zeta^s x,\]
where $\zeta \in \F_4=(E_{\cC})_*/(2, u_1)$. There is a copy of $\F_4$ in $\End(F_{\mathcal{C}})$ given by the ring generated by the automorphism $\omega(x)$. Further, $(E_{\cC})_*/(2, u_1)$ is isomorphic to $\F_4$, with generator the image of $\zeta$. Define a map
\[f: \F_4 \subseteq  \End(F_{\mathcal{C}}) \ra (E_{\cC})_*/(2, u_1)\cong \F_4\] 
by $f(\omega^s(x)) = \zeta^s$.
If $\gamma$ is as above, using the fact that $T(x) = x^2$, 
\[\gamma(x) = f(a_0)x +_{F_{\mathcal{C}}} f(a_1)x^2  +_{F_{\mathcal{C}}} f(a_2)x^4 +_{F_{\mathcal{C}}} f(a_3)x^8+ \ldots\]
For simplicity, we will identify $a_i$ with $f(a_i)$ and write
\begin{align}\label{gequation}
\gamma(x) = a_0x +_{F_{\mathcal{C}}} a_1x^2  +_{F_{\mathcal{C}}} a_2x^4 +_{F_{\mathcal{C}}} a_3x^8 +\ldots
\end{align}

\begin{prop}\label{prop:seriesandmod2u12}
For $\gamma \in S_{\cC}$, modulo $(x^{18})$,
\begin{align*}
\gamma(x)&=x+a_1x^2+a_2x^4  +a_1^2x^6+a_3x^8   +a_2^2x^{10} + a_1^2 a_2^2x^{12}+a_1x^{14}  +(a_1^3+a_4)x^{16} .
\end{align*}
\end{prop}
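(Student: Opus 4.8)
The plan is to compute $\gamma(x)$ directly from its formal group expansion (\ref{gequation}) by iterating the explicit formula for $F_{\cC}$ modulo $2$ recorded in \myref{minustwomod2}. Since $\gamma \in S_{\cC}$ we have $a_0 = 1$, so
\[\gamma(x) = x +_{F_{\cC}} a_1 x^2 +_{F_{\cC}} a_2 x^4 +_{F_{\cC}} a_3 x^8 +_{F_{\cC}} a_4 x^{16} +_{F_{\cC}} \cdots,\]
and modulo $(x^{18})$ every summand $a_i x^{2^i}$ with $i \geq 5$ vanishes. Thus it suffices to form the four-fold formal sum $g_4 := x +_{F_{\cC}} a_1 x^2 +_{F_{\cC}} a_2 x^4 +_{F_{\cC}} a_3 x^8 +_{F_{\cC}} a_4 x^{16}$ one term at a time, working throughout modulo $(x^{18})$ and using the relation $a_i^4 = a_i$ coming from $a_i \in \F_4$.

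The observation that makes the computation short is a degree count. Every cross term $z_1^m z_2^n$ appearing in $F_{\cC}(z_1,z_2)$ beyond the linear part $z_1 + z_2$ has $m, n \geq 2$, by \myref{minustwomod2}. When I adjoin the summand $a_k x^{2^k}$ by setting $z_2 = a_k x^{2^k}$ and $z_1 = g_{k-1}$, the cross term $z_1^m z_2^n$ has order at least $m + 2^k n \geq 2 + 2^{k+1}$. For $k \geq 3$ this exceeds $17$, so adjoining $a_3 x^8$ and $a_4 x^{16}$ contributes nothing beyond the linear terms $a_3 x^8$ and $a_4 x^{16}$. Hence only the first two steps, $k = 1$ and $k = 2$, require genuine work.

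For $k = 1$ I would set $z_1 = x$, $z_2 = a_1 x^2$ and read off from \myref{minustwomod2} that, modulo $(x^{18})$, the surviving cross terms are $z_1^2 z_2^2 = a_1^2 x^6$, $z_1^6 z_2^4 = a_1^4 x^{14} = a_1 x^{14}$, and $z_1^4 z_2^6 = a_1^6 x^{16} = a_1^3 x^{16}$, every other listed term having order $\geq 18$; this gives $g_1 = x + a_1 x^2 + a_1^2 x^6 + a_1 x^{14} + a_1^3 x^{16}$. For $k = 2$ I would then take $z_1 = g_1$, $z_2 = a_2 x^4$, where the only surviving cross term is $z_1^2 z_2^2$. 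Since $g_1^2 \equiv x^2 + a_1^2 x^4 \pmod{x^{10}}$, this term produces $a_2^2 x^{10} + a_1^2 a_2^2 x^{12}$. Adjoining the linear contributions $a_3 x^8$ and $a_4 x^{16}$ from the last two steps then assembles exactly the claimed expression.

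The main obstacle is purely bookkeeping: keeping the truncation consistent across the two nontrivial steps and correctly reducing the scalar exponents via $a_1^4 = a_1$ and $a_1^6 = a_1^3$, which are precisely what convert $z_1^6 z_2^4$ and $z_1^4 z_2^6$ into the $a_1 x^{14}$ and $a_1^3 x^{16}$ contributions. Care is also needed to confirm that $g_1^2$ carries no terms in intermediate degrees that could feed back below $x^{18}$ through $z_1^2 z_2^2$ at the second step; since $g_1$ is supported in degrees $1,2,6,14,16$, squaring lands only in degrees $2,4,12,\dots$, so the reduction $g_1^2 \equiv x^2 + a_1^2 x^4 \pmod{x^{10}}$ holds and no further contributions arise.
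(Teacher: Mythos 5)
Your computation is correct and follows exactly the route the paper intends: the paper's proof is the one-line remark that this is a direct computation from (\ref{gequation}) and the mod-$2$ expansion of $F_{\cC}$ in \myref{minustwomod2} with $a_0=1$, and your iterated evaluation (with the degree bound $m+2^k n \geq 2+2^{k+1}$ killing all cross terms for $k\geq 3$, and the reductions $a_1^4=a_1$, $a_1^6=a_1^3$) is precisely that computation carried out. No gaps.
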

\begin{proof}
This is a direct computation using (\ref{gequation}) and the formal group law of \fullref{minustwomod2}, noting that for $\gamma \in S_{\cC}$, $a_0 = 1$.
\end{proof}

\begin{cor}\label{cor:mod2u1}
Let $t_i=t_i(\gamma)$ where $\gamma \in S_{\cC}$. Modulo $(2, u_1)$, $t_0 \equiv 1$, $t_{2i} \equiv 0$ for $i \neq 0$ and
\begin{align*}
t_1 &\equiv a_1 &    t_5 &\equiv a_1^2 & t_9 &\equiv a_2^2 &  t_{13} &\equiv a_1   \\
 t_3 &\equiv a_2 &    t_7 &\equiv a_3   &   t_{11} &\equiv a_1^2 a_2^2    &  t_{15}&\equiv  a_1^3+ a_4.
\end{align*}
\end{cor}
\begin{proof}
This follows from \fullref{prop:seriesandmod2u12}, noting that $t_i $ is congruent to the coefficient of $x^{i+1}$ modulo $(2,u_1)$.
\end{proof}

\begin{prop}\label{prop:mod2u12}
Let $t_i = t_i(\gamma)$ for $\gamma \in S_{\cC}$. Then modulo $(2, u_1^2)  $,
\begin{align*}
t_0 & \equiv 1 + a_1^2u_1        &			
t_1  & \equiv a_1+ a_2^2u_1   	 &		
t_2  & \equiv a_1 u_1                	\\		
t_3 &\equiv a_2 + a_3^2 u_1 	 &		  
t_4 &\equiv a_2u_1                  	&		  
t_5 & \equiv a_1^2 +  a_1 a_2 u_1 \\			
t_6&\equiv a_1^2u_1	&				
t_7 &\equiv a_3+(a_1^3+ a_4^2)u_1  &		
\end{align*}
\end{prop}
\begin{proof}
This follows from \fullref{cor:mod2u1} and \fullref{cor:eqnmod2u16}.
\end{proof}

\begin{cor}\label{cor:S21modu14}
Let $t_i=t_i(\gamma)$ where $\gamma \in S_{\cC}$. Then 
\begin{align*}
t_0 &\equiv 1+a_1^2u_1+a_1u_1^2+ (a_2+a_2^2)u_1^3 \mod (2, u_1^4).
\end{align*}
\end{cor}
\begin{proof}
This follows from \fullref{prop:mod2u12} and \fullref{prop:t0t1eqn}.
\end{proof}
We will need better estimates for elements which are in $F_{2/2}\Sn_{\cC}$. Therefore, for the remainder of this section, we will always assume that $\gamma \in F_{2/2}\Sn_{\cC}$.
\begin{prop}\label{prop:modu13}
Let $t_i=t_i(\gamma)$ where $\gamma \in F_{2/2}\Sn_{\cC}$. Modulo $(2, u_1^4)$,
\begin{align*}
t_3 &\equiv  a_2+a_3^2u_1+a_4 u_1^3.
\end{align*}
Modulo $(2,u_1^3)$, $t_1 \equiv a_2^2u_1$ and $t_5 \equiv (a_2 +a_2^3) u_1^2$.
Modulo $(2, u_1^6)$,
\begin{align*}
t_2 &\equiv a_2^2 u_1^2+a_3 u_1^4+(a_2 +a_2^3) u_1^5.
\end{align*}
\end{prop}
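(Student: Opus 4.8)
The plan is to bootstrap from the lower-order approximations already established, using the master recursion of \myref{mod2u16} as the engine. That corollary expresses each $t_s$, modulo $(2,u_1^6)$, as a fourth power $t_s^4$ plus correction terms, each carrying an explicit power of $u_1$ and involving other $t_i$ to lower $u_1$-precision. The observation that makes this effective is that for $\gamma\in F_{2/2}\Sn_{\cC}$ every $t_i$ reduces modulo $(2,u_1)$ to a constant in $\F_4$ (by \myref{seriesandmod2u1}), and such a constant $c$ satisfies $c^4=c$; hence $t_s^4$ contributes only the constant $t_s \bmod (2,u_1)$ modulo $(2,u_1^4)$, while any $u_1$-dependent part of $t_s$ is pushed into $u_1$-degree $\geq 4$. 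Thus the apparently implicit equation $t_s\equiv t_s^4 + (\cdots)$ can be solved explicitly order by order, with the right-hand side determined entirely by values of $t_i$ known to strictly lower precision, so the whole argument is a finite induction grounded in \myref{seriesandmod2u1} and \myref{valmod(2u1)2}.

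First I would treat $t_0$ and $t_3$ modulo $(2,u_1^4)$. For $s=0$ the recursion collapses to $t_0\equiv t_0^4 + u_1t_1^2 + t_0^2t_1u_1^2$; substituting $t_0\equiv 1$, $t_1\equiv a_2^2u_1$ from \myref{valmod(2u1)2} and using $t_0^4\equiv 1 \pmod{(2,u_1^4)}$ gives $t_0\equiv 1+(a_2+a_2^2)u_1^3$. For $s=3$ one checks $\binom{5}{2}\equiv 0$ and that the sum $\sum_i u_1^2 t_i^4 t_{5-2i}^2$ drops out modulo $u_1^4$ (as $t_5,t_1,t_2$ are small), leaving $t_3\equiv t_3^4 + u_1t_7^2$; feeding in $t_3\equiv a_2$ and $t_7\equiv a_3+a_4^2u_1$ yields $t_3\equiv a_2+a_3^2u_1+a_4u_1^3$. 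The computations of $t_1$ and $t_5$ modulo $(2,u_1^3)$ are similar: for $s=1$ the only surviving term is $u_1t_3^2$, and for $s=5$ the answer comes from the two terms $u_1^2t_0^4t_9^2$ and $u_1^2t_3^4t_3^2$, which after reducing $t_9\equiv a_2^2$, $t_3\equiv a_2$ modulo $(2,u_1)$ give $t_5\equiv (a_2+a_2^3)u_1^2$.

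The main obstacle is the final computation of $t_2$ modulo $(2,u_1^6)$, the only place requiring the full depth of the recursion and where two separate correction terms conspire to produce one coefficient. For $s=2$ one has $\binom{4}{2}\equiv 0$ and the parity term $\binom{s+2}{1}t_{(s-1)/2}^8$ absent (as $s$ is even), so the recursion reads $t_2\equiv t_2^4 + u_1t_5^2 + u_1^2t_0^4t_3^2 + u_1^2t_1^6 + (t_0^4t_1+t_0^4t_4)u_1^4$ modulo $(2,u_1^6)$. Here $t_2^4$ and $u_1^2t_1^6$ land in $u_1$-degree $\geq 6$ and vanish; $u_1^2t_0^4t_3^2$ supplies $a_2^2u_1^2 + a_3u_1^4$ using $t_3^2\equiv a_2^2+a_3u_1^2$; and the two degree-five contributions $u_1t_5^2\equiv (a_2^2+a_2^3)u_1^5$ (invoking the freshly computed value of $t_5$) and $(t_1+t_4)u_1^4\equiv (a_2^2+a_2)u_1^5$ combine to $(a_2+a_2^3)u_1^5$. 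The delicate points throughout are the repeated use of the $\F_4$-identities $c^4=c$ and $c^6=c^3$, the vanishing of the relevant binomial coefficients modulo $2$, and above all the bookkeeping of exactly how much $u_1$-precision of each input $t_i$ is needed at each stage, so that every right-hand side refers only to approximations already in hand.
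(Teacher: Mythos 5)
Your proposal is correct and follows essentially the same route as the paper: both extract the case-by-case recursions for $t_0,t_1,t_2,t_3,t_5$ from \myref{mod2u16}, kill the irrelevant terms using binomial coefficients mod $2$ and the $u_1$-adic valuations of the $t_i$, and then substitute the base values from \myref{seriesandmod2u1} and \myref{valmod(2u1)2} (together with the freshly obtained $t_5$ mod $(2,u_1^3)$ in the $t_2$ computation), exploiting $c^4=c$ in $\F_4$ throughout. Your version is somewhat more explicit than the paper's, which only displays the recursions for $t_3$, $t_5$ and $t_2$, but the method and all intermediate values agree.
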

\begin{proof}
It follows from \fullref{cor:eqnmod2u16} that, modulo $(2,u_1^4)$,
\begin{align*}
t_3 &\equiv t_3^4+t_7^2 u_1+t_1^2 t_2^4 u_1^2+t_1^4 t_3^2 u_1^2+t_0^4 t_5^2 u_1^2 \\
t_5 &\equiv t_5^4+t_{11}^2 u_1+t_3^6 u_1^2+t_1^2 t_4^4 u_1^2+t_2^4 t_5^2 u_1^2+t_0^2 t_6 u_1^2+t_1^4 t_7^2 u_1^2+t_0^4 t_9^2 u_1^2.
\end{align*}
The results for $t_3$ and $t_5$ then follow from \fullref{cor:mod2u1} and \fullref{prop:mod2u12}. It also follows from \fullref{cor:eqnmod2u16} that, modulo $(2, u_1^6)$,
\begin{align*}
t_2 &\equiv t_2^4+t_5^2 u_1+t_1^6 u_1^2+t_0^4 t_3^2 u_1^2+t_0^4 t_1 u_1^4+t_0^4 t_4 u_1^4.
\end{align*}
The identity for $t_2$ then follows from the \fullref{cor:mod2u1} and \fullref{prop:mod2u12}, using the identity for $t_5$ modulo $(2,u_1^3)$.
\end{proof}

\begin{prop}\label{t0t1again}
Let $\gamma \in F_{2/2}\Sn_{\cC}$. Modulo $(2, u_1^8)$, 
\begin{align*}
t_1(\gamma) &\equiv a_2^2 u_1+a_3 u_1^3+a_3^2 u_1^5+a_3 u_1^6+(a_2^2 +a_2^3 +a_4 +a_4^2) u_1^7.
\end{align*}
Modulo $(2, u_1^{10})$, 
\begin{align*}
t_0(\gamma) &\equiv  1+(a_2 +a_2^2) u_1^3+a_3 u_1^5+a_3 u_1^8+(a_2 +a_2^2 +a_4+a_4^2) u_1^9.
\end{align*}
\end{prop}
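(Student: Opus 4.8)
The plan is to bootstrap from the two relations recorded in \myref{corv1}, which arise from comparing the coefficients of $x^4$ and $x^8$ in the functional equation (\ref{h}). Reducing the first relation of \myref{corv1} modulo $2$ gives
\[ t_0 \equiv t_0^4 + t_1^2 u_1 + t_0^2 t_1 u_1^2, \]
while the second is already stated modulo $2$:
\[ t_1 \equiv t_1^4 + t_3^2 u_1 + t_0^4 t_1^2 u_1^2 + t_0^2 t_2 u_1^2 + t_0^5 u_1^4 + t_0^8 u_1^4 + t_0^4 t_3 u_1^4. \]
Both identities hold exactly modulo $2$, with no $u_1$-truncation: the coefficient of $x^4$ (resp. $x^8$) in (\ref{h}) only sees the terms of $[-2]_{F_{\cC_U}}$ of degree at most $x^4$ (resp. $x^8$), as one checks by the same composition computation used in \myref{mod2u16}. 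The observation that breaks the apparent circularity is that, for $\gamma \in F_{2/2}\Sn_{\cC}$, we have $t_0 \equiv 1 \mod (2, u_1^3)$ and $t_1 \equiv 0 \mod (2, u_1)$ by \myref{seriesandmod2u1} and \myref{valmod(2u1)2}; hence the Frobenius terms $t_0^4, t_1^4, t_0^5, t_0^8$ are highly $u_1$-divisible. In particular $t_0^4 \equiv 1 \mod (2, u_1^{12})$ and $t_1^4 \equiv a_2^2 u_1^4 \mod (2, u_1^8)$, so modulo the target powers of $u_1$ these self-referential terms collapse to explicit low-order data.

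First I would establish $t_1$ modulo $(2, u_1^8)$. Into the $t_1$-relation I would substitute the approximations $t_0 \equiv 1 \mod (2, u_1^3)$, $t_1 \equiv a_2^2 u_1 \mod (2, u_1^2)$, $t_2$ modulo $(2,u_1^6)$ and $t_3$ modulo $(2, u_1^4)$, all furnished by \myref{valmod(2u1)2} and \myref{modu13}. Each of the seven summands is then known through order $u_1^7$; collecting coefficients, the $u_1^4$-contributions cancel in pairs and the surviving terms assemble into the claimed formula for $t_1$.

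Then I would feed this value of $t_1$ into the mod-$2$ relation for $t_0$. Since $t_0^4 \equiv 1 \mod (2, u_1^{12})$, that relation reduces modulo $(2, u_1^{10})$ to $t_0 \equiv 1 + u_1 t_1^2 + u_1^2 t_0^2 t_1$, and every term on the right is now accessible: $u_1 t_1^2$ needs only $t_1$ modulo $(2, u_1^5)$, while $u_1^2 t_0^2 t_1$ needs $t_0^2$ modulo $(2, u_1^7)$ (gotten by squaring the known low-order part of $t_0$, using $(a_2+a_2^2)^2 = a_2^2 + a_2$) together with the $t_1$ just computed. Collecting coefficients through $u_1^9$ produces the stated formula for $t_0$.

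The main obstacle is organizational rather than conceptual: one must track precisely which truncation of each $t_i$ is needed for each summand, confirm that every such truncation is already available from the earlier propositions, and verify that the Frobenius terms never require knowledge of $t_0$ or $t_1$ beyond the order already secured—so that the computation of $t_1$ does not secretly depend on the conclusion for $t_0$, and vice versa. The repeated use of the $\F_4$-identities $c^4 = c$ (e.g. $a_2^8 = a_2^2$, $a_3^4 = a_3$) and the mod-$2$ vanishing of the cross terms in squares and fourth powers are what keep the bookkeeping finite and tractable.
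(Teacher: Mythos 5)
Your proposal is correct and follows essentially the same route as the paper: the paper's proof of this proposition is exactly to extract $t_1$ from the $x^8$-relation of \myref{tiequations} using the low-order estimates of \myref{valmod(2u1)2} and \myref{modu13}, and then to feed that value of $t_1$ back into the $x^4$-relation to obtain $t_0$. Your bookkeeping (the cancellation of the $u_1^4$-terms in the $t_1$ computation, the cancellation of the $a_3^2u_1^7$ terms in the $t_0$ computation, and the use of $t_0^4\equiv 1\bmod(2,u_1^{12})$ to break the circularity) reproduces the intended calculation.
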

\begin{proof}
Apply \fullref{prop:t0t1eqn}, \fullref{cor:S21modu14} and \fullref{prop:modu13} for the estimate for $t_1$. The result for $t_0$ then follows from \fullref{prop:t0t1eqn}.
\end{proof}

\begin{prop}\label{t0gammafinally}
Let $\gamma \in F_{2/2}\Sn_{\cC}$. Modulo $(4, 2u_1^2,u_1^{10})$, 
\begin{align*}
 t_{0}(\gamma) &\equiv 1+2a_2+2a_3^2u_1+(a_2 + a_2^2)u_1^3 +a_3u_1^5 +a_3 u_1^8+(a_2 +a_2^2 +a_4+a_4^2) u_1^9.
\end{align*}
\end{prop}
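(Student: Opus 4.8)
The plan is to upgrade the mod-$(2,u_1^{10})$ description of $t_0(\gamma)$ furnished by \myref{t0t1again} to the finer modulus $(4,2u_1^2,u_1^{10})$. Observe first that, modulo $(4,2u_1^2,u_1^{10})$, an element of $\W[[u_1]]$ is determined by its $u_1^0$- and $u_1^1$-coefficients modulo $4$ together with its coefficients of $u_1^2,\dots,u_1^9$ modulo $2$. The latter eight coefficients are exactly those recorded in \myref{t0t1again}, which also shows that $t_0-1\in(2,u_1^3)$; so the content of the proposition beyond \myref{t0t1again} is precisely the computation of the $u_1^0$- and $u_1^1$-coefficients of $t_0$ modulo $4$, namely that they equal $1+2a_2$ and $2a_3^2$ respectively.

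To extract this $2$-torsion information I would feed the lower estimates into the integral relation of \myref{tiequations},
\[
t_0 \equiv t_0^4 + 2t_3 + 3t_1^2u_1 + 2t_0t_2u_1 + 3t_0^2t_1u_1^2 \pmod 4,
\]
and reduce each summand modulo $(4,2u_1^2,u_1^{10})$. Writing $t_0 = 1+\epsilon$ with $\epsilon\in(2,u_1^3)$ (legitimate by \myref{t0t1again}), one has $t_0^4 \equiv 1+2\epsilon^2+\epsilon^4 \pmod 4$, and a short check using $\epsilon\in(2,u_1^3)$ gives $2\epsilon^2\in(4,2u_1^2)$ and $\epsilon^4\in(4,u_1^{10})$, so that $t_0^4\equiv 1$. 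For the term $2t_3$ I would invoke the estimate $t_3\equiv a_2+a_3^2u_1$ of \myref{valmod(2u1)2} to get $2t_3\equiv 2a_2+2a_3^2u_1$. The term $2t_0t_2u_1$ vanishes because $t_2\in(2,u_1^2)$ (again \myref{valmod(2u1)2}), whence $2t_0t_2u_1\in(4u_1,2u_1^3)\subseteq(4,2u_1^2)$. Finally, since $t_1\in(u_1)$, both odd-multiple terms $3t_1^2u_1$ and $3t_0^2t_1u_1^2$ lie in $(u_1^3)$ and so affect only the $u_1^{\ge 3}$ coefficients modulo $2$, which are already pinned down by \myref{t0t1again}; in particular they contribute nothing to the $u_1^0$- and $u_1^1$-coefficients modulo $4$. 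Assembling these reductions yields $t_0\equiv 1+2a_2+2a_3^2u_1$ in degrees $0$ and $1$, and combining with \myref{t0t1again} gives the stated formula.

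The main obstacle I anticipate is the careful bookkeeping with the mixed ideal $(4,2u_1^2,u_1^{10})$: one must confirm both that the quartic $t_0^4$ collapses to $1$ (i.e. that $2\epsilon^2$ and $\epsilon^4$ are absorbed) and that the odd-multiple terms genuinely do not perturb the low-degree $2$-torsion. Each of these hinges on the a priori bounds $t_0-1\in(2,u_1^3)$ and on the divisibilities $t_1\in(u_1)$, $t_2\in(2,u_1^2)$ supplied by \myref{valmod(2u1)2} and \myref{t0t1again}. No new input about the formal group law or the action is required; this step is a purely formal, controlled refinement of estimates already in hand.
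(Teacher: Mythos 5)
Your argument is correct and takes essentially the same route as the paper, whose entire proof is the citation of \myref{tiequations}, \myref{valmod(2u1)2} and \myref{t0t1again}; you have simply supplied the bookkeeping with the ideal $(4,2u_1^2,u_1^{10})$ that the paper leaves implicit. One minor imprecision: the cited estimates only give $t_1\in(2,u_1)$ rather than $t_1\in(u_1)$, but this weaker containment still places $3t_1^2u_1$ and $3t_0^2t_1u_1^2$ inside $(4,2u_1^2)+(u_1^3)$, so they still contribute nothing in degrees $0$ and $1$ and your conclusion stands.
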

\begin{proof}
Apply \fullref{prop:t0t1eqn}, \fullref{prop:mod2u12} and \fullref{t0t1again}.
\end{proof}



\appendix

\section{The cohomology of $G_{24}$ (by Hans-Werner Henn)}\label{sec:appcoh}
  \setcounter{Athm}{0}
\makeatletter
\let\c@equation=\c@Athm
\makeatother
\renewcommand{\theequation}{\Alph{section}.\arabic{equation}}  
\noindent
The following consists of unpublished notes by Hans-Werner Henn, which were edited by the author. She thanks him for letting her include them here.

Let $\cC$ be the supersingular elliptic curve over $\FF_4$ with equation $y^2+y=x^3$.
In this appendix, we calculate the cohomology of the automorphism group of this elliptic curve 
with coefficients in the Lubin--Tate module $(E_{\cC})_*V(0)$ (see \fullref{sec:Etheory} for definitions).

None of the results are original. In some sense, this appendix redoes calculations by other people, for 
example by Bauer \cite[Section 7]{tbauer} and by Rezk \cite[Section 18]{rezksupp}. The basic ideas go back to Hopkins. 
Bauer and Rezk calculate the cohomology of the Weierstrass Hopf algebroid 
and the calculation here can, in principle, be deduced from their calculation 
by inverting the discriminant $\Delta$ and passing to a suitable completion. 

Our purpose is to give an independent and self contained calculation 
of the group cohomology including the complete multiplicative structure. 
Furthermore, all elements in cohomology are defined via ``Greek letter  constructions" 
avoiding any explicit cocycles or Massey products. 
Everything is 
deduced from the know{\-}ledge of $H^*(G_{24},(E_{\cC})_*/(2,u_1)) \cong H^*(G_{24},\FF_4[u^{\pm 1}])$, from the structure of the $G_{24}$--invariants 
of the symmetric algebra of a certain two dimensional representation $\rho$ of $G_{24}$, and 
from the structure of $v_1^{-1}H^*(G_{24},S_*(\rho)/(\Delta S_*(\rho)))$, 
where the discriminant $\Delta$ is an invariant class 
in degree $24$. This knowledge is established in \fullref{constant}, 
\fullref{Ginvariants} and \fullref{invariants} below. The main computation is that of $H^*(G_{24},S_*(\rho))$ and is given in \fullref{mainthm}. The computation of $H^*(G_{24}, (E_{\cC})_*V(0))$ then follows and is recorded in \fullref{thm:cohG24}.

\begin{Alem}\label{constant}
(a) There are classes $z\in H^4(Q_8,\FF_4)$, $\widetilde{x}\in H^1(Q_8,\FF_4)$ and
$\widetilde{y}\in H^1(Q_8,\FF_4)$ and an isomorphism of graded algebras with $C_3$--linear algebra action 
$$
H^*(Q_8,\F_4)\cong 
\F_4[\widetilde{x},\widetilde{y},z]/(\widetilde{x}\widetilde{y},\widetilde{x}^3+\widetilde{y}^3)  , 
$$ 
where $C_3$ acts on $Q_8$ via the conjugation action of $G_{24}$ on its normal subgroup $Q_8$, and 
$C_3$ acts on the right hand side by $\omega_*(\widetilde{x})=\zeta \widetilde{x}$, $\omega_*(\widetilde{y})=\zeta^2 \widetilde{y}$
and $\omega_*(z)=z$.

\noindent
(b) There are classes $k\in H^4(G_{24},\FF_4[u^{\pm1}]_0)$, $a\in H^1(G_{24},\FF_4[u^{\pm1}]_2)$ and
$b\in H^1(G_{24},\FF_4[u^{\pm1}]_4)$ and an isomorphism of graded algebras  
\[H^*(G_{24},\FF_4[u^{\pm1}]) \cong \F_4[v_2^{\pm 1},k,a,b]/(ab,b^3-v_2a^3),\]
where $v_2=u^{-3}$.
\noindent
(c) The subalgebra $H^*(G_{24},\FF_4[u^{-1}])\subseteq H^*(G_{24},\FF_4[u^{\pm 1}])$ is generated as  
an $\FF_4$--algebra by the elements $v_2$, $k$, $a$, $b$, $v_2^{-1}b^2$ and $v_2^{-1}a^3$. 
\end{Alem} 

\begin{proof} (a) We start from the well known result  
that there is an isomorphism of graded algebras 
$$
H^*(Q_8,\F_2)\cong \F_2[x,y,z]/(x^2+xy+y^2,x^2y+xy^2)
$$
where $x$ and $y$ are in dimension $1$ and $z$ is in dimension $4$. 
The action of $G_{24}/Q_8$ on this algebra is trivial on $z$ and the unique nontrivial action 
on $H^1\cong (\Z/2)^2$. With suitable choices of $x$ and $y$, we get $\omega_*x=y$, 
$\omega_*y=x+y$. This action has no eigenvectors over $\F_2$, but it has two eigenvectors 
over $\FF_4$ with Galois conjugate eigenvalues equal to $\zeta$ and $\zeta^2$. 
If we define $\widetilde{x}=x+\zeta y$ and $\widetilde{y}=x+\zeta^2y$, 
then $\omega_*\widetilde{x}=\zeta \widetilde{x}$ and $\omega_*\widetilde{y}=\zeta^2 \widetilde{y}$. 
Furthermore,
\[\widetilde{x}\widetilde{y}=x^2+xy+y^2=0\]
and 
\[\widetilde{x}^3=x^3+\zeta x^2y+\zeta^2 xy^2+y^3=x^3+\zeta^2x^2y+\zeta xy^2+y^3=\widetilde{y}^3 . \]
The claim now follows. 

\noindent
(b) Let $a=u^{-1}\widetilde{x}$, $b=u^{-2}\widetilde{y}$ and $k=z$. 
Then we get an isomorphism of bigraded algebras 
$$
\F_4[v_2^{\pm 1},k,a,b]/(v_2a^3-b^3,ab)\cong H^*(Q_8,\FF_4[u^{\pm 1}])^{C_3}.
$$

\noindent 
(c) This follows from the fact that $H^2(Q_8,\FF_4[u^{-1}]_2)^{C_3}\cong \FF_4$ 
is generated by $v_2^{-1}b^2$ and $H^3(Q_8,\FF_4[u^{-1}]_0)^{C_3}\cong \FF_4$ 
is generated by $v_2^{-1}a^3$. 
\end{proof}

Consider $u^{-1}$ and $v_1=u_1u^{-1}$ as elements in $(E_{\cC})_2$. By \fullref{subsec:AutC}, the action of $G_{24}$ is given by
\begin{align*}
\omega_*(v_1) &= v_1 & \omega_*(u^{-1})&= \zeta^2u^{-1} \\
i_*(v_1)&= \frac{v_1+2u^{-1}}{\zeta^2-\zeta}  &   i_*(u^{-1}) &= \frac{v_1-u^{-1}}{\zeta^2-\zeta} \\
j_*(v_1)&= \frac{v_1+2\zeta^2 u^{-1}}{\zeta^2-\zeta} &  j_*(u^{-1}) &= \frac{\zeta v_1-u^{-1}}{\zeta^2-\zeta} \\
k_*(v_1)&= \frac{v_1+2\zeta u^{-1}}{\zeta^2-\zeta} &   k_*(u^{-1}) &= \frac{\zeta^2 v_1-u^{-1}}{\zeta^2-\zeta}.
\end{align*}
\begin{Arem}\label{Arem:rho}
The two dimensional $\W$--module generated by $u^{-1}$ and $v_1$ is a 
representation of $G_{24}$ which we denote by $\varrho$. 
We denote its mod--$2$ reduction by $\rho$ and the respective graded symmetric algebras by $S_*(\varrho)$ and $S_*(\rho)$. 
Because $i^2_*(u)=-u$ and $i^2_*(u_1)=u_1$, we see that for each integer $n\geq 0$ 
the action of $G_{24}$ on $S_{2n}(\varrho)$ factors through the quotient $A_4  = G_{24}/(\pm 1) $. 
Likewise, the action of $G_{24}$ on all of $S_*(\rho)$ factors through an action of $A_4$. 

It follows that the element 
\[\widetilde{\Delta}:=\prod_{g\in V_4}g_*(u^{-1})\]  
in $S_*(\rho)$ is a $Q_8$--invariant. One computes that
$\widetilde{\Delta} \equiv u^{-1}(u^{-3}+v_1^3)$ modulo $(2)$. It is an eigenvector for the residual action of $G_{24}/Q_8$; in fact,  
\begin{equation}\label{omega-Delta}
\omega_*(\widetilde{\Delta})=\zeta^2\widetilde{\Delta}.
\end{equation} 
Hence, $\widetilde{\Delta}^3$ is a $G_{24}$--invariant and equal to the mod--$2$ reduction of the discriminant $\Delta$ (see \fullref{subsec:E1}).
\end{Arem}

The proof of the following theorem is similar to that of Goerss, Henn and Mahowald \cite[Proposition 2]{ghmv1}.
\begin{Athm}\label{completion}
Completion at the maximal ideal $I\subseteq S_*(\rho)[\widetilde{\Delta}^{-1}]$ induces a continuous isomorphism of $\F_4[G_{24}]$--algebras. 
$$
S_*(\rho)[\widetilde{\Delta}^{-1}]^{\ \widehat{ }}_{I}\to (E_{\cC})_* V(0) .
$$
\end{Athm}

Therefore to compute the cohomology $H^*(G_{24},(E_2)_*V(0))$, we start by analyzing $S_*(\rho)$ 
and its invariants. Let $n\geq 0$ be an integer and
\[\widetilde{S}_{n}(\rho) = \begin{cases} {S}_{n}(\rho) & 0\leq n\leq 3\\
 S_{n}(\rho)/(\widetilde{\Delta}S_{n-4}(\rho)) &   4\leq n .\end{cases}\]

\begin{Alem}\label{Sn} 
Multiplication with 
$\widetilde{\Delta}$ induces a split short exact sequence of $\F_4[Q_8]$--modules 
$$
0\to \Sigma^8S_n(\rho)\xra{\widetilde{\Delta} } S_{n+4}(\rho)\to \widetilde{S}_{n+4}(\rho)\to 0 \ . 
$$ 
Further, for $n\geq 3$, multiplication by $v_1$ induces isomorphisms of $\F_4[Q_{8}]$--modules
\[ v_1: \Sigma^2\widetilde{S}_{n}(\rho) \xra{\cong} \widetilde{S}_{n+1}(\rho).\]
\end{Alem}

\begin{proof} 
The $n+1$ elements 
$\widetilde{\Delta}u^{-k}v_1^{n+1-k}$, $k=0,\ldots n$, together with the 
four elements $u^{-l}v_1^{n+4-l}$, $l=0,\ldots, 3$ form a basis of the $\F_4$--vector space $S_{n+4}(\rho)$. Therefore, the sequence is exact as a sequence of 
$\F_4[Q_8]$--modules. Furthermore, the subspace generated by 
$u^{-l}v_1^{n+4-l}$ with $0\leq l\leq 3$ is $Q_8$--invariant. This gives the splitting. 

If $n\geq 3$, the image of the elements $u^{-l}v_1^{n-l}$ for $0\leq l\leq 3$ form a basis for $\widetilde{S}_{n}(\rho)$. Multiplication by $v_1$ sends this basis of $\widetilde{S}_{n}(\rho)$ to the corresponding basis of $\widetilde{S}_{n+1}(\rho)$. 
\end{proof} 

The next step is to identify the invariants.  
\begin{Aprop}\label{Qinvariants} The $Q_8$--invariants of $S_*(\rho)$ are given as 
$\FF_4[v_1,\widetilde{\Delta}]$. 
\end{Aprop}

\begin{proof} The split short exact exact sequence of \fullref{Sn} 
gives a short exact sequence of $Q_8$--invariants. Consider the following
commutative diagram 
$$
\xymatrix{
0\ar[r] & \Sigma^8\F_4[v_1,\widetilde{\Delta}] \ar[r]^{\widetilde{\Delta}} \ar[d] & \F_4[v_1,\widetilde{\Delta}]
\ar[d] \ar[r] & \F_4[v_1]\ar[r] \ar[d]\ & 0 \\  
0\ar[r] & \Sigma^8S_*(\rho)^{Q_8}\ar[r]^{\widetilde{\Delta}}  & S_*(\rho)^{Q_8}
\ar[r] & (\widetilde{S}_*)^{Q_8}\ar[r] & 0 . &   \\ }
$$  
From part (c) of \fullref{Sn} and an easy calculation, we get
$(\widetilde{S}_n)^{Q_8}\cong \F_4\{v_1^n\}$ for each $n\geq 0$. 
The claim now follows from an induction over the internal degree 
and the five lemma.  
\end{proof}

\begin{Acor}\label{Ginvariants} The $G_{24}$--invariants of $S_*(\rho)$ are given as 
$\FF_4[v_1,\Delta]$.      
\end{Acor}

We turn towards analyzing the cohomology algebra $H^*(G_{24},(E_{\cC})_*V(0))$. 
We begin by introducing certain classes in $H^1(G_{24},(E_{\cC})_*V(0))$. For this we consider the  
exact sequence $G_{24}$--modules 
\begin{equation}\label{v0-exact-seq}
0\to \rho\buildrel{ 2}\over\longrightarrow \varrho/(4) \to  \rho \to 0 
\end{equation}
with associated Bockstein $\delta$.

\begin{Alem}\label{eta} (a) The class $v_1$ in $\rho$ is $G_{24}$--invariant. The class 
$\eta:=\delta(v_1)$ in $H^1(G_{24},\rho)$ is nontrivial. 

\noindent 
(b) $\eta$ is $v_1$--torsion free in $H^*(G_{24},S_*(\rho))$. 

\noindent 
(c) $\eta$ is not divisible by $v_1$. 
\end{Alem} 
\begin{proof} (a) A direct computation shows that $v_1$ is invariant modulo $(2)$, but not invariant modulo $(4)$.
Hence, $\eta$ is non-trivial. 
 
\noindent 
(b) More generally, $v_1^{2k}$ is invariant modulo $(4)$ while $v_1^{2k+1}$ is not. 
This shows that $\delta(v_1^{2k+1})=v_1^{2k}\eta$ is non-trivial.   

\noindent 
(c) If $\eta$ is divisible, then there is a class $\eta'\in H^1(G_{24},S_0(\rho))$ such 
that $v_1\eta'=\eta$. However, $S_0(\rho)=\FF_4$ and $H^1(G_{24};S_0(\rho))=0$ 
by \fullref{constant}. 
\end{proof}

Consider the graded $\F_4[v_1][G_{24}]$--algebras 
\[\widetilde{S}_*(\rho):=S_*(\rho)/(\widetilde{\Delta}S_*(\rho)) \]
and
\[\overline{S}_*(\rho):=S_*(\rho)/(\Delta S_*(\rho)).\] 
By part (a) of \fullref{Sn}, multiplication with $v_1$ determines an isomorphism 
$\widetilde{S}_n(\rho)\to \widetilde{S}_{n+1}(\rho)$ if $n\geq 3$. Therefore, $\widetilde{S}_*(\rho)[v_1^{-1}]$ is a graded $\F_4[v_1^{\pm 1}][G_{24}]$--algebra with 
\[(\widetilde{S}_*(\rho)[v_1^{\pm 1}])_{2k}\cong v_1^{k-3}S_3(\rho)\] 
for every integer $k$. 

\smallskip 

\begin{Alem}\label{invariants} (a) There is an isomorphism of $G_{24}$--modules 
\[S_3(\rho) \cong \FF_4[G_{24}]\otimes_{\FF_4[C_6]}\FF_4.\] 

\noindent
(b) The $Q_8$--cohomology of $\widetilde{S}_*(\rho)[v_1^{\pm 1}]$ is given by 
\[H^*(Q_8,\widetilde{S}_*(\rho)[v_1^{\pm 1}])\cong \FF_4[v_1^{\pm 1},\eta].\]  
It is $C_3$ invariant so that 
$H^*(G_{24},\widetilde{S}_*(\rho)[v_1^{\pm 1}])\cong \FF_4[v_1^{\pm 1},\eta]$. 

\noindent
(c) The $G_{24}$--cohomology of $\overline{S}_*(\rho)[v_1^{\pm 1}]$ is given by 
\[H^*(G_{24},\overline{S}_*(\rho)[v_1^{\pm 1}])\cong \FF_4[v_1^{\pm 1},\eta].\] 
\end{Alem}

\begin{proof} (a) The $C_6$--linear map which sends $1$ to $u^{-3}$ extends to a $G_{24}$--linear isomorphism 
$\FF_4[G_{24}]\otimes_{\FF_4[C_6]}\FF_4\to S_3(\rho)$. 
 
\noindent 
(b) The isomorphism of (a) restricts to an isomorphism of 
$Q_8$--modules
\[\FF_4[Q_8]\otimes_{\FF_4[C_2]}\FF_4\cong S_3(\rho).\] The isomorphisms $(\widetilde{S}_*(\rho)[v_1^{\pm 1}])_{2k}\cong v_1^{k-3}S_3(\rho)$ and part (a) 
show that there is a class $h'\in H^1(Q_{8},S_3(\rho)) \cong H^1(C_2,\FF_4)\cong \FF_4$
and, for $h=v_1^{-3}h'$, an isomorphism of graded algebras $H^*(Q_{8},\widetilde{S}_*(\rho)[v_1^{\pm 1}])\cong \FF_4[v_1^{\pm 1},h]$. By \fullref{eta} 
we know that $\eta$ is $v_1$--torsion free in 
\[H^*(G_{24},\widetilde{S}_*(\rho)) \cong H^*(Q_8,\widetilde{S}_*(\rho))^{C_3}, \] 
hence it is also $v_1$--torsion free in
$H^*(Q_8,\widetilde{S}_*(\rho))$. Therefore, $v_1^2\eta$ must agree 
with $h'$ up to a scalar. This gives the isomorphism 
$H^*(Q_8,\widetilde{S}_*(\rho)[v_1^{\pm 1}])\cong \FF_4[v_1^{\pm 1},\eta]$. 
The invariance with respect to the residual action of $G_{24}/Q_8$ follows from the fact that
both $v_1$ and $\eta$ are invariant. 

\noindent
(c) By iterated use of \fullref{Sn}, we obtain an isomorphism 
$$
S_*(\rho)\cong \widetilde{S}_*(\rho)\oplus \widetilde{\Delta}\widetilde{S}_*(\rho)
\oplus \widetilde{\Delta}^2\widetilde{S}_*(\rho)\oplus \widetilde{\Delta}^3\widetilde{S}_*(\rho) 
$$ 
of $G_{24}$-modules and therefore an isomorphism 
$$
\overline{S}_*(\rho)\cong \widetilde{S}_*(\rho)\oplus \widetilde{\Delta}\widetilde{S}_*(\rho)
\oplus \widetilde{\Delta}^2\widetilde{S}_*(\rho). 
$$ 
Part (b) implies that
$$
H^*(Q_8,\overline{S}_*(\rho)[v_1^{\pm 1}])\cong 
\FF_4[v_1^{\pm 1},\eta]\oplus \widetilde{\Delta}\FF_4[v_1^{\pm 1},\eta]
\oplus\widetilde{\Delta}^2\FF_4[v_1^{\pm 1},\eta].
$$ 
The result follows by observing that, for the residual action of $C_3\cong G_{24}/Q_8$, 
the first summand is invariant while the other two summands 
are eigenspaces for the eigenvalues $\zeta^2$ and $\zeta$ respectively (see (\ref{omega-Delta})). 
\end{proof}

Next we observe that multiplication by $v_1^k$ determines exact sequences of graded 
$G_{24}$--modules 
\begin{equation}\label{v1-exact-seq}
0\to \Sigma^{2k}S_n(\rho)\buildrel{v_1^k}\over\longrightarrow S_{n+k}(\rho)
\buildrel{p}\over\longrightarrow (\FF_4[u^{-1},v_1]/(v_1^{k}))_{2(n+k)}\to 0 
\end{equation}
with associated Bockstein $\delta_k$. In the remainder of this chapter, the Bockstein associated 
with the exact sequence (\ref{v0-exact-seq}) will not play any role, and therefore 
we take the liberty to simply write $\delta$ instead of $\delta_1$.

\begin{Arem}\label{rem:xynu}
The classes $v_2$, $v_2^2$ and $v_2^3$ are invariant in $H^0(G_{24},\F_4[u^{\pm1}])$ but do not lift to invariants in $H^0(G_{24}, S_*(\rho))$. Further, $v_2^2$ is invariant in $\F_4[u^{-1}, v_1]/(v_1^2)$ but is not invariant in $\F_4[u^{-1}, v_1]/(v_1^3)$. Therefore, $\delta(v_2)$, $\delta(v_2^2)$, $\delta(v_2^3)$ and $\delta_2(v_2^2)$ are non-trivial. Define
\begin{align*}
\nu &:=\delta(v_2) \in H^1(G_{24}, S_2(\rho)),\\
 y &:=\delta(v_2^3) \in H^1(G_{24}, S_8(\rho)), \\ 
 x &:= \delta_2(v_2^2) \in H^1(G_{24}, S_4(\rho)).
 \end{align*}
 \end{Arem}
Given a ring $R$ and a set $X$, we will use the notation $RX$ to denote the free $R$--module generated by the elements of $X$.
\begin{Alem}\label{H1}
\begin{enumerate}[(a)]
\item There are relations $v_1\nu=0$, $v_1y=0$ and $v_1^2x=0$.
\item There is a relation $v_1x= \delta(v_2^2)$.
\item The classes $\nu$, $x$ and $y$ are not divisible by $v_1$.
\end{enumerate}
\end{Alem}
\begin{proof} 
(a) This follows from the definition of these classes and exactness of the long exact cohomology 
sequences. 

\noindent 
(b) This is straightforward by comparing the two short exact sequences (\ref{v1-exact-seq}) 
for $k=1$ and $k=2$. 

\noindent 
(c) One verifies by a direct computation that
\[H^0(G_{24},\F_4[u^{-1},v_1]/(v_1^2))\cong \F_4[v_2^2,v_1]/(v_1^2)\{1\} 
\oplus\F_4[v_2^2,v_1]/(v_1)\{v_1v_2\} \]
and that
\begin{align*}
H^0(G_{24},\F_4[u^{-1},v_1]/(v_1^3))&\cong  \F_4[v_2^4,v_1]/(v_1^3)\{1\} 
\oplus (\F_4[v_2^4,v_1]/(v_1^2))\{v_1v_2^2\} \\ 
  &  \ \ \  \oplus (\F_4[v_2^4,v_1]/(v_1))\{v_1^2v_2,v_1^2v_2^3\}. 
\end{align*}
If $\nu$ is $v_1$--divisible, then there is a non-trivial class in $H^1(G_{24},S_1(\rho))$ which is 
annihilated by $v_1^2$ and, hence, is in the image of $\delta_2$. 
However, this contradicts the triviality of the group   
$H^0(G_{24},\F_4[u^{-1},v_1]/(v_1^2))$ in internal degree $6$. Likewise, 
if $y$ is $v_1$--divisible, then there is a non-trivial class in $H^1(G_{24},S_7(\rho))$ which is 
annihilated by $v_1^2$. Again, this is a contradiction since 
$H^0(G_{24},\F_4[u^{-1},v_1]/(v_1^2))$ is trivial in internal degree $18$. Finally, if  $x$ is $v_1$--divisible then there is a non-trivial class in $H^1(G_{24},S_3(\rho))$ which is 
annihilated by $v_1^3$ and, hence, in the image of $\delta_3$. 
However, the group  $H^0(G_{24},\F_4[u^{-1},v_1]/(v_1^3))$ is trivial
in internal degree 12. 
\end{proof}

Here is the main theorem giving the complete structure of the cohomology algebra 
$H^*(G_{24},S_*(\rho))$.

\begin{Athm}\label{mainthm}
Let $R = \F_4[v_1, \Delta, k]$, where $k\in H^4(G_{24},\FF_4)$ is the 
cohomological periodicity generator.

\noindent
(a) As an $R$--module, $H^*(G_{24}, S_*(\rho))$ is isomorphic to 
\[  R\{1,\eta,\eta^2, \eta^3\} \oplus R/(v_1^2)\{x, \eta x,x^2,\eta x^2\} \oplus R/(v_1)\{\nu, y, \nu^2, \nu y, \nu^3, \Delta^{-1}\nu^2 y\},  \]
where $\Delta^{-1}\nu^2 y$ is a class in $H^3(G_{24},S_0(\rho))$ such that $\Delta(\Delta^{-1}\nu^2 y)=\nu^2y$. 

\noindent
(b) The products $\eta^2$, $\nu^2$, $\nu y$, $\eta x$ and $x^2$ are $R$--module generators and the remaining five products satisfy $\eta \nu =0$, $xy =0$ and 
\begin{align*}
\eta y &= v_1 x^2, &\nu x & = v_1 \eta x, &
y^2 &= \nu^2 \Delta.
\end{align*}

\noindent
(c) The element $\Delta^{-1}\nu^2 y$ and the products $\eta^3$, $\nu^3$ and $\eta x^2$ are $R$--module generators.
There are relations:
\begin{align*}
x^3&= \nu^2 y, &
\eta^2 x &= \nu^3, &
\nu x^2 &= v_1 \eta x^2, \\
\eta^2 y &= v_1 \eta x^2, &
\nu y^2 &= \nu^3 \Delta, &
 y^3 &= \nu^2 y \Delta
\end{align*}
and the remaining possible threefold products are zero.

\noindent
(d) All products of $\eta$, $x$ and $y$ with $\Delta^{-1}\nu^2 y$ and all fourfold products among $\eta$, $\nu$, $x$ and $y$ are trivial except for
$\eta^4 = v_1^4k$.
\end{Athm}

In order to prove \fullref{mainthm}, we calculate the cohomology of $\overline{S}_*(\rho)$ as a module over $\F_4[v_1,k]$. 
In fact we will find the following result. 

\begin{Aprop}\label{nodelta}
Let $\overline{R}:=\F_4[v_1,k]$. As an $\overline{R}$--module, $H^*(G_{24}, \overline{S}_*(\rho))$ is isomorphic to
\begin{align*}
\overline{R}\{1,\eta,\eta^2, \eta^3\}\oplus \overline{R}/(v_1^2)\{x, \eta x,x^2,\eta x^2\} \oplus \overline{R}/(v_1)\{\nu, y, \nu^2, \nu y, \nu^3, \Delta^{-1}\nu^2 y\}.\end{align*}
\end{Aprop}
\noindent
Note that, in \fullref{nodelta}, we are using the product structure in order to describe the generators of 
$H^*(G_{24},\overline{S}_*(\rho))$, but are not yet attempting
to describe the cohomology as an algebra.

\fullref{nodelta} is deduced from the following three lemmas. We write $s \doteq t$ if $s = \ell t$ for some $\ell \in \F_4$.
\begin{Alem} \label{H^1}  

\noindent 
(a) There is an isomorphism of $\F_4[v_1]$--modules 
\[H^1(G_{24},\overline{S}_*(\rho))\cong \F_4[v_1]\{\eta\}\oplus \F_4[v_1]/(v_1)\{\nu,y\}\oplus 
\F_4[v_1]/(v_1^2)\{x\}.\]

\noindent
(b) The reduction homomorphism  
\[p_*: H^1(G_{24},\overline{S}_*(\rho))\to H^1(G_{24},\overline{S}_*(\rho)/(v_1))\]
satisfies
\begin{align*}
p_*(\eta) &\doteq a, & p_*(\nu) &\doteq b, & p_*(x) &\doteq v_2 a, & p_*(y) &\doteq v_2^2b,
\end{align*}
for $a$ and $b$ as in \fullref{constant}. 

\noindent
(c) The connecting homomorphism associated to the exact sequence (\ref{v1-exact-seq}) 
is trivial on  $a$, $v_2a$, $b$ and $v_2^2b$. It is nontrivial on the generators $v_2^2a$, $v_2^3a$, 
$v_2b$ and $v_2^3b$. In fact,
\begin{align*}
\delta(v_2^2a) &\doteq v_1 \eta x, & \delta(v_2^3a) &\doteq v_1  x^2, & \delta(v_2 b) &\doteq \nu^2, & \delta(v_2^3b) &\doteq \nu y.
\end{align*}
Further,
$\nu x  \doteq  \eta v_1 x $ and $\eta y \doteq v_1x^2$.
\end{Alem} 

\begin{proof} (a) Any finitely generated graded $\F_4[v_1]$--module 
is a direct sum of a free module and of cyclic torsion modules of the form $\F_4[v_1]/(v_1^n)$. 
On the other hand, we know from \fullref{invariants} that the free part of 
$H^1(G_{24},\overline{S}_*(\rho))$ is of rank one. 
By the long exact sequence in cohomology associated to the short exact 
sequence 
$$
0\to \overline{S}_*(\rho)\buildrel{v_1}\over\longrightarrow \overline{S}_*(\rho)
\buildrel{p}\over\longrightarrow \overline{S}_*(\rho)/(v_1) \cong \F_4[u^{-1}]/(u^{-12})\to 0 ,
$$ 
we know that the submodule of $H^1(G_{24},\overline{S}_*(\rho))$ which is annihilated by $v_1$ is 
generated by the classes $\nu$, $v_1x$ and $y$. 
By \fullref{eta} and \fullref{H1} the classes $\eta$, $\nu$, $x$ and $y$ are not divisible by 
$v_1$, proving (a). 

\noindent
(b) We know from \fullref{constant} 
that $H^1(G_{24},\F_4[u^{-1}]/(u^{-12}) )$ is an $\F_4$--vector space   
on generators of the form $v_2^ia$, $v_2^ib$  for $0\leq i\leq 3$ 
and the four generators of $H^1(G_{24},\overline{S}_*(\rho))$ have to 
map to four of these classes. The four other generators map via $\delta$ to non--trivial elements of
$H^2(G_{24},\overline{S}_*(\rho))$. The claim in (b) then follows for degree reasons.

\noindent 
(c) From (a) and (b), it is clear by exactness that $\delta$ is trivial on $a$, $b$, $v_2a$ and $v_2^2b$ 
and nontrivial on the four other generators. We use that $\delta$ is $p_*$--linear (i.e. $\delta(p_*(t) s ) =t\delta(s)$) and 
\fullref{H1} to conclude that
\begin{align*}
\delta(v_2^2a) &\doteq \delta(v_2^2 p_*(\eta)) = \eta  \delta(v_2^2) = \eta v_1 x \\
\delta(v_2^3a) &\doteq \delta(v_2^2 p_*(x)) = x  \delta(v_2^2) = v_1 x^2 \\
\delta(v_2 b) &\doteq \delta(v_2 p_*(\nu)) = \nu  \delta(v_2) = \nu^2 \\
\delta(v_2^3 b) &\doteq \delta(v_2^3 p_*(\nu)) = \nu  \delta(v_2^3) = \nu y.
\end{align*}
Note further that 
\begin{align*}
\delta(v_2^2a) &\doteq \delta(v_2 p_*(x)) = \nu x, & \delta(v_2^3a) &\doteq \delta(v_2^3 p_*(\eta)) = \eta y
\end{align*}
and hence $\nu x \doteq  \eta v_1 x$ and $\eta y \doteq v_1x^2$.
\end{proof}

\begin{Alem}\label{H^2}
(a) The elements $\eta^2$, $\nu^2$, $\eta x$, $x^2$ and $\nu y$ are non-trivial in $H^2(G_{24}, \overline{S}_*(\rho))$ and
\begin{align*}
p_*(\eta^2) &\doteq a^2 ,& p_*(\nu^2) &\doteq b^2, & p_*(\eta x) &\doteq v_2a^2, &
 p_*(x^2) &\doteq v_2^2a^2, & p_*(\nu y) &\doteq v_2^2b^2.
\end{align*}

\noindent
(b) There is an isomorphism of $\F_4[v_1]$--modules 
$$
H^2(G_{24},\overline{S}_*(\rho))\cong \F_4[v_1]/\{\eta^2\}\oplus \F_4[v_1]/(v_1)\{\nu^2,\nu y\}\oplus 
\F_4[v_1]/(v_1^2)\{\eta x, x^2\}\ . 
$$ 

\noindent
(c) The connecting homomorphism associated to the exact sequence (\ref{v1-exact-seq}) 
is trivial on the generators $a^2$, $v_2a^2$, $v_2^2a^2$, $b^2$ and $v_2^2b^2$. 
It is nontrivial on the generators $v_2^3a^2$, $v_2b^2$ and $v_2^{-1}b^2$, and
\begin{align*}
\delta (v_2^3a^2) &\doteq v_1\eta x^2 ,& \delta (v_2 b^2) &\doteq \nu^3 ,&\delta(v_2^{-1}b^2)&\doteq\Delta^{-1}\nu^2y .
\end{align*}
Further, $v_1\eta x^2  \doteq  \nu x^2 \doteq \eta^2 y $.
\end{Alem} 

\begin{proof}(a) This follows from the fact that $p_*$ is a map of algebras and the images 
of the given elements are non-trivial.

\noindent
(b) We claim that $H^2(G_{24},\overline{S}_*(\rho))$ is generated as a $\F_4[v_1]$--module 
by the elements $\eta$, $\nu^2$, $\eta x$, $x^2$ and $\nu y$. In fact, by part (c) of \fullref{H^1} the submodule of $H^2(G_{24},\overline{S}_*(\rho))$ which is 
annihilated by $v_1$ is generated by the elements $v_1\eta x$, 
$v_1x^2$, $\nu^2$ and $\nu y$. In particular, the $v_1$--torsion submodule is of rank four. 
Furthermore, we know from \fullref{invariants} that the $v_1$--torsionfree part is of rank one and hence 
$H^2(G_{24},\overline{S}_*(\rho))$ is generated as a $\F_4[v_1]$--module by five elements. By (a) 
these must be the five elements $\eta^2$, $\nu^2$, $\eta x$, $x^2$ and $\nu y$. 
By part (c) of \fullref{H^1},  
the $v_1$--torsion submodule is as specified. This leaves $\eta^2$ which must generate a free 
$\F_4[v_1]$ submodule by \fullref{invariants}.

\noindent 
(c) By (a) and exactness, $\delta$ is trivial on $a^2$, $b^2$, $v_2a^2$, 
$v_2^2a^2$ and $v_2^2b^2$.  
Finally we get from \fullref{H1} and $p_*$--linearity  
\begin{align*}
\delta (v_2^3a^2) &\doteq x \eta \delta(v_2^2 ) \doteq v_1\eta x^2 \\
\delta (v_2 b^2) &\doteq \nu^2 \delta (v_2) \doteq \nu^3 \\
\delta(v_2^{-1}b^2)&=\delta(v_2^{-4}v_2^3b^2)\doteq \Delta^{-1}\delta(v_2^3b^2)=\Delta^{-1}\nu^2y . 
\end{align*}
(The last calculation is a calculation in the cohomology of $\Delta^{-1}S_*(\rho)$ which, in 
that of $\overline{S}_*(\rho)$, gives the desired relation. It uses the relation $p_*(\Delta^{-1}) = v_2^{-4}$.) 

Note further that, $\delta (v_2^3a^2) \doteq x^2 \delta(v_2) \doteq \nu x^2$ and $\delta (v_2^3a^2) \doteq \eta^2 \delta (v_2^3) \doteq \eta^2 y$.
Hence, $v_1\eta x^2  \doteq  \nu x^2 \doteq \eta^2 y $. This finishes the proof of (c).
\end{proof}

\begin{Alem}\label{H^3} 
(a) The elements $\eta^3$, $\nu^3$, $\eta x^2$ and $\Delta^{-1}\nu^2 y$ are non-trivial in $H^3(G_{24}, \overline{S}_*(\rho))$ and 
\begin{align*}
p_*(\eta^3)&\doteq a^3 & p_*(\nu^3)&\doteq b^3 = v_2a^3 \\
  p_*(\eta x^2)&\doteq v_2^2a^3 & p_*(\Delta^{-1}y \nu^2)&\doteq v_2^{-4}v_2^2b^3 = v_2^{-1}a^3 .
\end{align*}

\noindent
(b) There is an isomorphism of $\FF_4[v_1]$--modules 
$$
H^3(G_{24},\overline{S}_*(\rho))\cong \F_4[v_1]\{\eta^3\}\oplus 
\F_4[v_1]/(v_1)\{\Delta^{-1}\nu^2 y,\nu^3\}\oplus \F_4[v_1]/(v_1^2)\{\eta x^2\}\ . 
$$ 
\end{Alem} 

\begin{proof} 
(a) The first part follows from the fact that $p_*$ is a map of algebras. 

\noindent 
(b) From (a) we see that $p_*: H^3(G_{24},\overline{S}_*(\rho))\to H^3(G_{24},\overline{S}_*(\rho)/(v_1))$ 
is onto and hence $H^3(G_{24},\overline{S}_*(\rho))$ is generated by $\eta^3$, $\nu^3$, $\eta x^2$ 
and $\Delta^{-1}\nu^2y$. By part (c) of \fullref{H^2}, $\nu^3$ and  
$\Delta^{-1}\nu^2y$ are annihilated by $v_1$. Further, $v_1\eta x^2$ is nontrivial and also annihilated 
by $v_1$. By \fullref{invariants}, the torsionfree part is of rank one, so this proves the claim. 
\end{proof}

We finally turn towards the proof of \fullref{mainthm}. 
\begin{proof}[Proof of  \fullref{mainthm}.]
(a) Consider the short exact sequence of $\F_4[G_{24}]$--modules
\[ 0 \to \Sigma^{24} S_*(\rho) \xra{\Delta}S_{*+12}(\rho) \to \overline{S}_{*+12}(\rho) \to 0.\]
As in the proof of \fullref{Sn}, one can show that it is split. Therefore, the maps 
\[H^*(G_{24}, S_{*+12}(\rho) )  \to H^*(G_{24},\overline{S}_{*+12}(\rho) )\] 
are surjective and the long exact sequence on cohomology groups gives rise to short exact sequences in each degree. The claim then follows from \fullref{nodelta} and the five lemma.

\noindent
(b) Note that $\nu x \doteq v_1 \eta x$ and $\eta y \doteq v_1x^2$ follow from part (c) of \fullref{H^1}. For the other relations, first note that $p_*(\Delta)=v_2^4$ so that
\begin{align*}
p_*(\eta \nu) &\doteq ab =0, &
p_*(y^2) &\doteq v_2^4b^2 , \\
p_*(x y) &\doteq v_2^3 ab =0, &
  p_*(\Delta \nu^2) &\doteq v_2^4b^2 .
\end{align*}
Hence, up to elements in the kernel of 
\[p_*: H^2(G_{24}, S_*(\rho)) \to H^2(G_{24}, S_*(\rho)/(v_1)),\] 
the relations hold in $H^2(G_{24}, S_*(\rho))$. However, in these degrees, the non-zero elements in the kernel of $p_*$ are $v_1$--torsion free. Since $\eta \nu$, $xy$ and $y^2-\ell \Delta \nu^2$ are $v_1$--torsion, the relations hold as claimed.

\noindent
(c) Note that $v_1\eta x^2  \doteq  \nu x^2 \doteq \eta^2 y $ follow from part (c) of \fullref{H^2}.
Further,
\[p_*(\nu^3) = b^3 = v_2a^3 \doteq p_*(x \eta^2).\]
Therefore, for an appropriate $\ell \in \F_4$, $\nu^3 - \ell \eta^2 x$ must be a $v_1$ multiple of $\eta^3$. It must be zero since it is $v_1$--torsion and $\eta^3$ is $v_1$--torsion free. Hence, $\nu^3 \doteq x \eta^2$.
Finally, since 
\[ p_*(x^3) \doteq v_2^3 a^3 = v_2^2 b^3 \doteq p_*(\nu^2 y),\]
for an appropriate choice of $\ell$ in $\F_4$, we must have that $x^3 - \ell \nu^2y$ is a $v_1$--multiple of $\eta$, and again, must therefore be zero since it is $v_1$--torsion and $\eta^3$ is $v_1$--torsion free.
That the other threefold products vanish follows from part (b).

\noindent
(d)
With the exception of $\eta^4$ which is $v_1$--free, all fourfold products and all products of 
$\eta$, $\nu$, $x$ and $y$ with $\Delta^{-1}\nu^2y$ are $v_1$--torsion. However, 
\[H^4(G_{24};S_*(\rho))\cong \F_4[v_1,\Delta]\{k\}\] 
is $v_1$--torsion free. By  
\fullref{invariants} the class $\eta^4$ is $v_1$--torsion free and, for degree reasons, it must be equal to 
$v_1^4k$, up to a nontrivial scalar in $\FF_4$.

Finally, note that the generators $v_1$, $\Delta$ and $k$ are Galois invariant classes.  
Likewise, $\eta$ as the mod--$2$ Bockstein of $v_1$, and $\nu$, $x$ and 
$y$ as Bocksteins of Galois invariant classes are also Galois invariant. 
It follows that the multiplicative relations, which we have only proved modulo units in $\FF_4$, 
do hold on the nose. 
\end{proof}

\begin{Arem}
If we extend $G_{24}\cong SL_2(\F_3)$ by the Galois group to 
$G_{48}=GL_2(\F_3)$, then the $G_{48}$-cohomology of $S_*(\rho)$ is obtained from that of $G_{24}$ by taking Galois invariants. This is the content of the following result. (See \fullref{fig:hg24}).
\end{Arem}

\begin{Athm}\label{thm:Srho}
There is a ring isomorphism
\[H^*(G_{48}, S_*(\rho)) \cong \F_{2}[v_1,\Delta, k, \eta, \nu, x, y, \Delta^{-1}\nu^2y]/(\sim)\] 
where $(\sim)$ is the ideal generated by
\begin{align*}
v_1 \nu,  & &v_1^2x, & &v_1y, 
\end{align*}
in cohomological degree $1$,
\begin{align*}
&\eta \nu, & &  \nu x-v_1\eta x,  & &\eta y-v_1x^2, & &xy,   & & y^2-\nu^2\Delta, 
\end{align*}
in cohomological degree $2$,
\begin{align*}
 \eta^2x -\nu^3, & & x^3-\nu^2 y,  & & \Delta (\Delta^{-1}\nu^2y) - \nu^2 y 
\end{align*}
in cohomological degree $3$ and
\begin{align*}
\eta^4-v_1^4 k
\end{align*}
in cohomological degree $4$. Further, 
\[ H^*(G_{24}, S_*(\rho) )\cong \F_4 \otimes_{\F_2} H^*(G_{48}, S_*(\rho)).\]
\end{Athm}
We deduce the main result from \fullref{thm:Srho} and the following lemma, whose proof is analogous to that of Goerss, Henn and Mahowald \cite[Theorem 6]{ghmv1}.
\begin{Alem}\label{lem:completion}
Let $m =48$ or $m=24$. There is an isomorphism
\[H^*(G_{m}, S_*(\rho)[\Delta^{-1}]^{\wedge}_{(j)}) \cong (H^*(G_{m}, S_*(\rho))[\Delta^{-1}])^{\wedge}_{(j)}.\]
\end{Alem}

\begin{Athm}\label{thm:cohG24}
There is an ismorphism
\[ H^*(G_{48}, (E_{\cC})_*V(0))\cong \F_2[\![j]\!][v_1,\Delta^{\pm 1}, k, \eta, \nu, x, y]/(\sim)\]
where $(\sim)$ is the ideal generated by
\begin{align*}
 v_1^{12}-j\Delta 
\end{align*} 
 in cohomological degree $0$
\begin{align*}
 v_1 \nu,  & &  v_1^2x , & & v_1y , 
\end{align*}
in cohomological degree $1$,
\begin{align*}
\eta \nu,&  & \nu x-v_1\eta x, &  & \eta y-v_1x^2, & & xy,  &  &  y^2-\nu^2\Delta, 
\end{align*}
in cohomological degree $2$,
\begin{align*}
  \eta^2x - \nu^3,  & &  x^3-\nu^2 y,  
 \end{align*} 
 in cohomological degree $3$ and
 \begin{align*}
   \eta^4 -v_1^4 k 
\end{align*}
in cohomological degree $4$. Further, 
\[ H^*(G_{24},  (E_{\cC})_*V(0) )\cong \F_4 \otimes_{\F_2} H^*(G_{48}, (E_{\cC})_*V(0))).\]
\end{Athm}

\begin{figure}[H]
 \captionsetup{width=\textwidth}
\includegraphics[width=\textwidth]{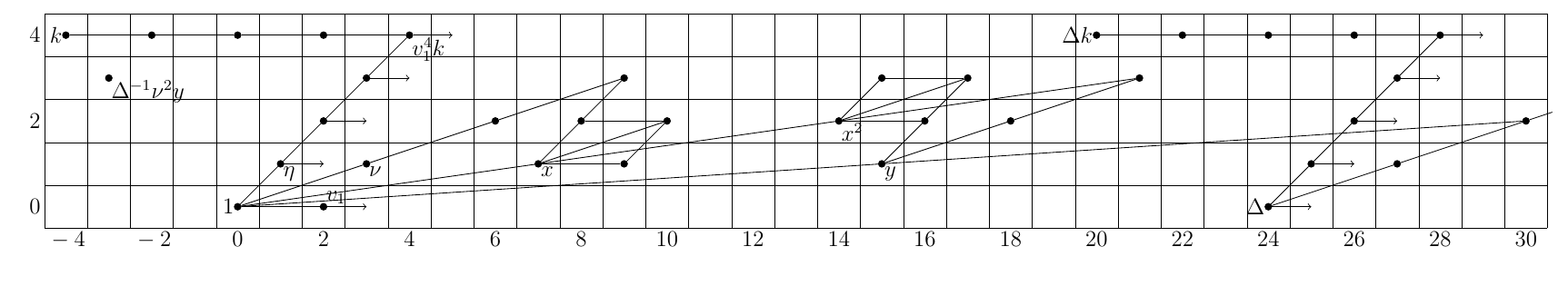}
\caption{The cohomology $H^*(G_{24}, S_*(\rho))$, drawn in the Adams grading $(t-s, s)$. 
It is periodic with respect to $s$ with period $4$ and periodicity generator $k$ and with respect to $t$ with period $24$ and periodicity generator $\Delta$. 
 A $\bullet$ denotes a copy of $\F_4$. Lines of slope $1$ denote multiplication by $\eta$ and lines of slope $1/3$ denote multiplication by $\nu$. Lines of slope $1/7$ denote multiplication by $x$ and those of slope $1/15$ multiplication by $y$. Horizontal lines denote multiplication by $v_1$. 
 Classes attached to horizontal arrows are free over $\F_4[v_1]$. }
\label{fig:hg24}
\end{figure}	


 \bibliographystyle{plain}


\begin{thebibliography}{10}

\bibitem{tbauer}
T.~Bauer.
\newblock Computation of the homotopy of the spectrum {\tt tmf}.
\newblock In {\em Groups, homotopy and configuration spaces}, volume~13 of {\em
  Geom. Topol. Monogr.}, pages 11--40. Geom. Topol. Publ., Coventry, 2008.

\bibitem{Paper1}
A.~Beaudry.
\newblock The algebraic duality resolution at p = 2.
\newblock {\em Algebr. Geom. Topol.}, 15(6):3653--3705, 2015.

\bibitem{MR2030586}
E.~S. Devinatz and M.~J. Hopkins.
\newblock Homotopy fixed point spectra for closed subgroups of the {M}orava
  stabilizer groups.
\newblock {\em Topology}, 43(1):1--47, 2004.

\bibitem{ghmv1}
P.~G. Goerss, H.-W. Henn, and M.~Mahowald.
\newblock The homotopy of {$L_2V(1)$} for the prime 3.
\newblock In {\em Categorical decomposition techniques in algebraic topology
  ({I}sle of {S}kye, 2001)}, volume 215 of {\em Progr. Math.}, pages 125--151.
  Birkh\"auser, Basel, 2004.

\bibitem{GMmod}
P.~G. Goerss and M.~J. Hopkins.
\newblock Moduli spaces of commutative ring spectra.
\newblock In {\em Structured ring spectra}, volume 315 of {\em London Math.
  Soc. Lecture Note Ser.}, pages 151--200. Cambridge Univ. Press, Cambridge,
  2004.

\bibitem{MR506881}
M.~Hazewinkel.
\newblock {\em Formal groups and applications}, volume~78 of {\em Pure and
  Applied Mathematics}.
\newblock Academic Press Inc. [Harcourt Brace Jovanovich Publishers], New York,
  1978.

\bibitem{HKM}
H.-W. Henn, N.~Karamanov, and M.~Mahowald.
\newblock The homotopy of the {$K(2)$}-local {M}oore spectrum at the prime 3
  revisited.
\newblock {\em Math. Z.}, 275(3-4):953--1004, 2013.

\bibitem{lubintate}
J.~Lubin and J.~Tate.
\newblock Formal moduli for one-parameter formal {L}ie groups.
\newblock {\em Bull. Soc. Math. France}, 94:49--59, 1966.

\bibitem{MR2508904}
M.~Mahowald and C.~Rezk.
\newblock Topological modular forms of level 3.
\newblock {\em Pure Appl. Math. Q.}, 5(2, Special Issue: In honor of Friedrich
  Hirzebruch. Part 1):853--872, 2009.

\bibitem{MR860042}
D.~C. Ravenel.
\newblock {\em Complex Cobordism and Stable Homotopy Groups of Spheres}, volume
  121 of {\em Pure and Applied Mathematics}.
\newblock Academic Press Inc., Orlando, FL, 1986.

\bibitem{MR1642902}
C.~Rezk.
\newblock Notes on the {H}opkins-{M}iller theorem.
\newblock In {\em Homotopy theory via algebraic geometry and group
  representations ({E}vanston, {IL}, 1997)}, volume 220 of {\em Contemp.
  Math.}, pages 313--366. Amer. Math. Soc., Providence, RI, 1998.

\bibitem{rezksupp}
C.~Rezk.
\newblock Supplementary notes for math 512 (version 0.18), 2007.

\bibitem{MR1935487}
K.~Shimomura and X.~Wang.
\newblock The {A}dams-{N}ovikov {$E_2$}-term for {$\pi_*(L_2S^0)$} at the prime
  2.
\newblock {\em Math. Z.}, 241(2):271--311, 2002.

\bibitem{silverman}
J.~H. Silverman.
\newblock {\em The Arithmetic of Elliptic Curves}, volume 106 of {\em Graduate
  Texts in Mathematics}.
\newblock Springer-Verlag, New York, 1986.

\bibitem{MR2172781}
H.~S. Wilf.
\newblock {\em {G}eneratingfunctionology}.
\newblock A K Peters, Ltd., Wellesley, MA, third edition, 2006.

\end{thebibliography}

\end{document}